\documentclass[12pt]{article}
\usepackage{amssymb,amsthm,amsmath,pb-diagram}
\usepackage{fullpage}
\newtheorem{thm}[equation]{Theorem}
\newtheorem{lemma}[equation]{Lemma}
\newtheorem{prop}[equation]{Proposition}
\newtheorem{cor}[equation]{Corollary}
\theoremstyle{definition}
\newtheorem{remark}[equation]{Remark}
\newtheorem{notation}[equation]{Notation}
\newtheorem{hypothesis}[equation]{Hypothesis}

\newcommand{\Gl}{\operatorname{GL}}
\newcommand{\GL}{\operatorname{GL}}
\newcommand{\Mat}{\operatorname{Mat}}
\newcommand{\Spec}{\operatorname{Spec}}
\newcommand{\N}{{\mathbb N}}
\newcommand{\m}{{\mathfrak m}}

\newcommand{\A}{{\mathbb A}}
\newcommand{\C}{{\mathcal C}}
\renewcommand{\O}{{\mathcal O}}
\newcommand{\F}{\mathcal{F}}
\renewcommand{\cong}{\equiv}
\renewcommand{\mod}{\operatorname{mod}}
\renewcommand{\P}{{\mathbb P}}
\newcommand{\Vect}{\operatorname{Vect}}
\newcommand{\iso}{\to^{\!\!\!\!\!\!\!\sim\,}}
\newcommand{\End}{\operatorname{End}}
\newcommand{\Hom}{\operatorname{Hom}}
\newcommand{\Gal}{\operatorname{Gal}}
\newcommand{\Ind}{\operatorname{Ind}}
\newcommand{\Br}{\operatorname{Br}}
\newcommand{\ch}{\operatorname{char}}
\newcommand{\V}{\operatorname{V}}
\newcommand{\DMod}{\operatorname{\partial-Mod}}
\newcommand{\PP}{\operatorname{PP}}

\numberwithin{equation}{section}
\begin{document}
\title{Patching over Fields\\
}
\author{David Harbater\footnote{Supported in part by NSF Grant DMS-0500118.} \and Julia Hartmann\footnote{Supported by the German National Science Foundation (DFG).}}
\date{}
\maketitle
\begin{abstract}
\noindent We develop a new form of patching that is both far-reaching and more elementary than the previous versions that have been used in inverse Galois theory for function fields of curves.  A key point of our approach is to work with fields and vector spaces, rather than rings and modules.
After presenting a self-contained development of this form of patching, we obtain applications to other structures such as Brauer groups and differential modules.  
\end{abstract}

\section{Introduction}

This manuscript introduces a new form of {\it patching}, a method that has been used to prove results in Galois theory over function fields of curves (e.g.\ see the survey in \cite{MSRI}).  Our approach here, which involves patching vector spaces given over a collection of fields, is more elementary than previous approaches, while facilitating various applications.

There are several forms of patching in the Galois theory literature, all drawing inspiration from ``cut-and-paste'' methods in topology and analysis, in which spaces are constructed on metric open sets and glued on overlaps.  Underlying this classical approach are Riemann's Existence Theorem (e.g.\ see \cite{MSRI}, Theorem~2.1.1), Serre's GAGA \cite{gaga}, and Cartan's Lemma on factoring matrices \cite{cartan}.  In the case of formal patching (e.g.\ in \cite{GCAL}, \cite{harbaterstevenson}, \cite{pries}), one considers rings of formal power series, and ``patches'' them together using Grothendieck's Existence Theorem on sheaves over formal schemes (\cite{EGA}, Corollary 5.1.6).  In the context of rigid patching (e.g.\ in \cite{liu}, \cite{raynaud}, \cite{popep}), one relies on Tate's rigid analytic spaces, where there is a form of ``rigid GAGA'' that takes the place of Grothendieck's theorem.  The variant known as algebraic patching (e.g.\ \cite{haranvoelklein}, \cite{voelklein}, \cite{haranjarden2}) restricts attention to the line, and draws on ideas from the rigid approach (most notably, convergent power series rings).  But that strategy avoids relying on more substantial geometric results, and instead works with normed rings and versions of Cartan's Lemma.

The current approach differs from formal and rigid patching by focusing on vector spaces rather than modules; i.e.\ by working over (fraction) fields rather than rings.  Doing so makes it possible for us to prove our patching results more directly, without the more substantial foundations needed in the other approaches.  As a result, our approach is conceptually simpler and should be more accessible to those not already familiar with patching methods.
Moreover our method is not restricted to the case of the line.  In addition to providing a framework in which one can prove the sort of results on inverse Galois theory that have been shown using previous methods (see Section~\ref{finitegroups} below), our approach
also permits easy application of patching to other situations in which one works just with fields and not with rings.  See Section~\ref{algebras} for an application to Brauer groups of fields, and Section~\ref{differential} on an application to differential modules (which are in fact vector spaces).  Further applications in these directions appear in \cite{hhk} and \cite{harhar2}.

A framework for stating patching results can be found in Section~\ref{setup}, followed by some preliminary results in Section~\ref{preliminarysection}.  Our main patching result (Theorem~\ref{globalpatching}) and a variant (Theorem~\ref{globalpatchingseveral}) are shown in Section~\ref{globalsection}.  Section~\ref{localsection} takes up related forms of patching, in which ``more local'' patches are used; the main result there is Theorem~\ref{clpatching}, along with a variant, Theorem~\ref{localpatchingseveral}.  A further generalization to singular curves appears in Section~\ref{singular}.  The versions in Section~\ref{localsection} and~\ref{singular} are designed to allow the method of patching over fields to be used in a variety of future applications.  
(Those interested just in our main form of patching, Theorem~\ref{globalpatching}, can skip Sections~\ref{localsection} and   \ref{singular}, as well as the second half of Section~\ref{setup}.)
Finally, in Section~\ref{applications}, we show how our new version of patching can be used to prove both old and new results.

We thank Daniel Krashen for helpful discussions concerning the
application to Brauer groups in Section~\ref{algebras}, 
and both him and Moshe Jarden for further comments and suggestions.  We also thank the Mathematical Sciences Research Institute for their hospitality during the writing of this paper. 

\section{The setup for patching over fields} \label{setup}

The general framework for patching can be expressed in a categorical language that permits its use in various contexts.  Here we provide such a framework for patching vector spaces over fields; later in Section~\ref{applications}, we show how our results can be extended and applied to patching other objects over fields.  We begin with some notation.

If $\alpha_i:\C_i \to \C_0$ are functors ($i=1,2$), then we may form the {\bf
2-fibre product}  category $\C_1 \times_{\C_0} \C_2$ (with respect to $\alpha_1, \alpha_2$), defined as follows:  An
object in the category consists of a pair $(V_1,V_2)$ together with an isomorphism
$\phi:\alpha_1(V_1) \iso
\alpha_2(V_2)$ in $\C_0$, where $V_i$ is an object
in $\C_i$  ($i=1,2$).  A morphism from $(V_1,V_2;\phi)$ to
$(V_1',V_2';\phi')$ consists of  morphisms $f_i:V_i \to V_i'$ in $\C_i$ (for
$i=1,2$)  such that $\phi' \circ \alpha_1(f_1) = \alpha_2(f_2) \circ \phi$.

For any field $F$, we write $\Vect(F)$ for the category of finite dimensional $F$-vector 
spaces. If $F_1, F_2$ are subfields of
a field $F_0$ and we let $\C_i = \Vect(F_i)$, then
there are base change functors $\alpha_i:\C_i \to \C_0$
given on objects by $\alpha_i(V_i) = V_i \otimes_{F_i} F_0$.  So we can form the
category $\C := \Vect(F_1)\times_{\Vect(F_0)} \Vect(F_2)$ with respect to these functors (and in the sequel, the functors $\alpha_i$ will be understood, though suppressed in the $2$-fibre product notation). 

Given an object $(V_1,V_2;\phi)$ in the above category $\C$, let $V_0 = \alpha_2(V_2) = V_2 \otimes_{F_2} F_0$.  Then $V_0, V_1, V_2$ are each vector spaces over $F := F_1 \cap F_2 \le F_0$.  
Let $i_2:V_2 \hookrightarrow V_0$ be the natural inclusion, and let $i_1:V_1
\hookrightarrow V_0$ be the composition of the natural inclusion $V_1
\hookrightarrow \alpha_1(V_1)  = V_1 \otimes_{F_1} F_0$ with $\phi$.  With respect to the inclusions $i_1,i_2$, we may form the vector space fibre product $V := V_1 \times_{V_0} V_2 = \{(v_1,v_2) \in V_1 \times V_2 \,|\,i_1(v_1)=i_2(v_2)\}$ over $F$; we call $V$ the {\bf fibre product} of the object $(V_1,V_2;\phi)$.  
Note that
if we identify  $V_1$ (resp.\ $V_2$) with its image under $i_1$ (resp.\ $i_2$),
then the fibre product $V$ is just the {\it intersection} of $V_1$ and $V_2$ inside
$V_0$. Of course this identification depends on $\phi$ (since $i_1$ depends on $\phi$).

The following is a special case of \cite{CAPS}, Proposition~2.1.

\begin{prop}\label{harb}
Let $F_1,F_2\leq F_0$ be fields, and let $F=F_1\cap F_2$. 
Let 
$$\beta:\Vect(F)\rightarrow \Vect(F_1)\times_{\Vect(F_0)}\Vect(F_2)$$ 
be the natural map given by base change.
Then the following two statements are equivalent:
\begin{enumerate}
\item $\beta$ is an equivalence of categories.\label{harb1}
\item For 
every positive integer $n$ and 
every matrix $A\in \Gl_n(F_0)$ there
  exist matrices $A_i\in \Gl_n(F_i)$ such that $A=A_1A_2$.\label{harb2}
\end{enumerate}
Moreover if these conditions hold, then the inverse of $\beta$ (up to isomorphism) is given on objects by taking the fibre product.
\end{prop}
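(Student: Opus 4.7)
The plan is to establish both implications and the ``moreover'' clause simultaneously by showing that the functor $\mu: \C \to \Vect(F)$ defined on objects by $\mu(V_1,V_2;\phi) = V_1\times_{V_0}V_2$ (with the $F$-vector space structure from the setup) is a quasi-inverse to $\beta$ exactly when (2) holds.

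\medskip

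\emph{Direction (1) $\Rightarrow$ (2).} Given $A\in\GL_n(F_0)$, I would apply essential surjectivity of $\beta$ to the object $(F_1^n,F_2^n;\phi_{A^{-1}})\in\C$, where $\phi_{A^{-1}}$ is left multiplication by $A^{-1}$ on $F_0^n$. This produces some $V\in\Vect(F)$ and an isomorphism in $\C$ from $\beta(V)=(V\otimes F_1,V\otimes F_2;\mathrm{id})$ to $(F_1^n,F_2^n;\phi_{A^{-1}})$. After choosing an $F$-basis of $V$, the data of this isomorphism becomes a pair of matrices $M_i\in\GL_n(F_i)$, and the compatibility that defines a morphism in $\C$ reads $A^{-1}=M_2M_1^{-1}$, i.e.\ $A=M_1M_2^{-1}$. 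Setting $A_1=M_1\in\GL_n(F_1)$ and $A_2=M_2^{-1}\in\GL_n(F_2)$ yields the desired factorization $A=A_1A_2$.

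\medskip

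\emph{Direction (2) $\Rightarrow$ (1), together with the ``moreover'' clause.} I would verify two natural isomorphisms, $\mu\circ\beta\simeq\mathrm{id}_{\Vect(F)}$ and $\beta\circ\mu\simeq\mathrm{id}_{\C}$, whose combined truth shows that $\beta$ is an equivalence with quasi-inverse $\mu$.

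The first isomorphism is nearly automatic: fixing an $F$-basis of $V$ identifies $V\otimes F_i$ with the tuples in $V\otimes F_0$ having all coordinates in $F_i$, and the intersection of those two subspaces consists of tuples with coordinates in $F_1\cap F_2=F$, which is the image of $V$.

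The second isomorphism is the main obstacle. Given $(V_1,V_2;\phi)\in\C$, let $n=\dim_{F_0}V_0=\dim_{F_i}V_i$, pick bases $e$ of $V_1$ and $f$ of $V_2$ (viewed inside $V_0$ via $i_1,i_2$), and let $A\in\GL_n(F_0)$ satisfy $e=fA$. Applying hypothesis (2) to $A^{-1}$ and taking inverses yields a factorization $A=A_2A_1^{-1}$ with $A_1\in\GL_n(F_1)$ and $A_2\in\GL_n(F_2)$. Then the tuples $eA_1$ (still a basis of $V_1$) and $fA_2$ (still a basis of $V_2$) coincide inside $V_0$; call this common tuple $g=(g_1,\dots,g_n)$. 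Each $g_j$ lies in $V_1\cap V_2=\mu(V_1,V_2;\phi)$, and the $g_j$ are $F$-linearly independent since they are even $F_1$-linearly independent in $V_1$. Spanning: for $v\in V_1\cap V_2$, write $v=\sum a_jg_j$ with $a_j\in F_1$ (using $v\in V_1$) and $v=\sum b_jg_j$ with $b_j\in F_2$ (using $v\in V_2$); since $g$ is also an $F_0$-basis of $V_0$, uniqueness of $F_0$-coordinates forces $a_j=b_j\in F_1\cap F_2=F$. Hence $\dim_F\mu(V_1,V_2;\phi)=n$, the natural maps $\mu(V_1,V_2;\phi)\otimes_F F_i\to V_i$ are surjective hence bijective by dimensions, and they are compatible with $\phi$ by the very construction of the fibre product.

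\medskip

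The hardest ingredient is producing the common basis $g$ in the second direction; once it is in hand, the spanning argument relies only on $F=F_1\cap F_2$ and the uniqueness of coordinates in the $F_0$-vector space $V_0$.
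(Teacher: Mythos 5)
Your proof is correct. Note that the paper itself does not prove Proposition~\ref{harb}; it simply cites it as a special case of \cite{CAPS}, Proposition~2.1, which works in the more general setting of modules over rings rather than vector spaces over fields. Your argument is the standard descent-style proof for this statement, and it does everything that needs to be done: for (1)$\Rightarrow$(2) you correctly observe that unwinding essential surjectivity applied to $(F_1^n,F_2^n;\phi_{A^{-1}})$ in a chosen basis produces the factorization; for (2)$\Rightarrow$(1) together with the ``moreover'' clause, the key step is using the factorization to rewrite the transition matrix so that the two bases of $V_1$ and $V_2$ coincide in $V_0$, and then the spanning argument via uniqueness of $F_0$-coordinates cleanly pins down the fibre product as the $F$-span of the common basis $g$. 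One small point worth spelling out in a final write-up: you should note explicitly that the map $\psi_i:\mu(V_1,V_2;\phi)\otimes_F F_i\to V_i$ sending $m\otimes a\mapsto a\,\pi_i(m)$ is natural in $(V_1,V_2;\phi)$, and that the required compatibility with $\phi$ is exactly the defining relation $i_1\circ\pi_1=i_2\circ\pi_2$ of the fibre product, base-changed to $F_0$; you gesture at this with ``by the very construction of the fibre product,'' which is accurate but could be made precise. With that said, the argument is complete and sound, and in spirit matches the method of the cited reference.
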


Our main Theorems~\ref{globalpatching} and \ref{clpatching} assert that the base change functor $\beta:\Vect(F)\rightarrow \Vect(F_1)\times_{\Vect(F_0)}\Vect(F_2)$ is an equivalence of categories in situations where certain fields $F \le F_1, F_2 \le F_0$ arise geometrically.  The above proposition reduces the proofs there to showing the following two statements in those contexts:
\begin{itemize}
\item[]
\hspace{-.5cm}{\it Factorization}: For 
every 
$A\in \Gl_n(F_0)$ there exist 
$A_i\in \Gl_n(F_i)$ such that $A=A_1A_2$.\label{factcondition}
\item[]\hspace{-.55cm}
{\it Intersection}:  $F_1\cap F_2=F$.\label{intersectcondition}
\end{itemize}
In Sections~\ref{globalsection} and \ref{localsection}, we prove each of these two conditions in turn, and as a result obtain the main theorems.  Beforehand, in Section~\ref{preliminarysection}, we prove a factorization result that will be useful in proving both of the above two conditions.  In later results (Theorems~\ref{localpatchingseveral} and \ref{singularpatching}), we will consider a more general type of situation, and for this we introduce the following definitions (which 
give another perspective on the results of this paper, though they 
are not otherwise essential and may be skipped on a first reading).  

Let $\F := \{F_i\}_{i\in I}$ be a finite inverse system of fields (not necessarily filtered), whose inverse limit is a field $F$.  Let $\iota_{ij}:F_i \to F_j$ denote the inclusion map associated to $i,j \in I$ with $i \succ  j$ in the partial ordering on the index set $I$.  By a (vector space) {\bf patching problem} for the system $\F$ we will mean a system $\mathcal{V} := \{V_i\}_{i \in I}$ of finite dimensional $F_i$-vector spaces for $i \in I$, together with $F_i$-linear maps $\nu_{ij}:V_i \to V_j$ for all $i\succ j$ in $I$, such that for $i \succ j$ in $I$, the induced $F_j$-linear map $\nu_{ij} \otimes_{F_i} F_j: V_i \otimes_{F_i} F_j \to V_j$ is an isomorphism.  Note that for any patching problem $\mathcal{V}$, the dimension ${\rm dim}_{F_i} V_i$ is independent of $i \in I$; and we call this the {\bf dimension} of the patching problem, denoted ${\rm dim}\, \mathcal{V}$.

A {\bf morphism of patching problems} $\{V_i\}_{i \in I} \to \{V_i'\}_{i \in I}$ for $\F$ is a collection of $F_i$-linear maps $\phi_i:V_i \to V_i'$ (for $i \in I$) which are compatible with the maps $\nu_{ij}:V_i \to V_j$ and $\nu_{ij}':V_i' \to V_j'$.  The patching problems for $\F$ thus form a category $\PP(\F)$.  (One can also consider the analogous notion of {\it algebra patching problems}, in which each of the finite dimensional vector spaces is given the structure of an associative algebra over its base field.  Similarly one can consider patching problems for (finite dimensional) commutative algebras, central simple algebras, etc.  These also form categories.)

Every finite dimensional $F$-vector space $V$ induces a patching problem $\beta(V)$ for $\F$, by taking $V_i = V \otimes_F F_i$ and taking $\nu_{ij} = {\rm id}_V \otimes_F \iota_{ij}$.  Here $\beta$ defines a functor from the category $\Vect(F)$ of finite dimensional $F$-vector spaces to the category $\PP(\F)$.  If $\mathcal V$ is a patching problem for $\F$, and $\beta(V)$ is isomorphic to $\mathcal V$, we say that $V$ is {\bf solution} to the patching problem $\mathcal V$.  If $\beta: \Vect(F) \to \PP(\F)$ is an equivalence of categories, then it defines a bijection on isomorphism classes of objects; i.e., every patching problem for $\F$ has a unique solution up to isomorphism.

The situation described in Proposition~\ref{harb} above can then be rephrased in terms of patching problems for the inverse system $\F := \{F_0, F_1, F_2\}$ with $0 \prec 1,2$ in the partial ordering, and with corresponding inclusions $\iota_{i0}:F_i \to F_0$ for $i=1,2$.  Namely, the proposition says that in this situation, the above functor $\beta$ is an equivalence of categories if and only if the matrix factorization condition~(\ref{harb2}) of the proposition holds.  As noted above, every patching problem $\{V_0,V_1,V_2\}$ for $\F$ then has a unique solution $V$ up to isomorphism; and by the last assertion in the proposition, $V$ is given by the fibre product $V_1 \times_{V_0} V_2$, or equivalently by the inverse limit of the finite inverse system $\{V_i\}$.  As also noted above, with respect to the inclusions of $V_1, V_2$ into $V_0$, we may also regard this fibre product as the intersection $V_1 \cap V_2$ in $V_0$.
The above result then has the following corollary:

\begin{cor}\label{dimcriterion}
Let $F_1,F_2 \leq F_0$ be fields and write $F=F_1\cap F_2$. Let
  $\mathcal{V} = \{V_i\}$ be a patching problem for $\F := \{F_i\}$, and let
  $V=V_1 \cap V_2$ inside $V_0$.  
Then the patching problem $\mathcal{V}$ has a solution if and only if $\dim_F
  V=\dim\,\mathcal{V}$; and in this case, $V$ is a solution.
\end{cor}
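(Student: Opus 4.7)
The plan is to prove the two directions of the biconditional separately, with the ``$V$ is a solution'' conclusion falling out of either argument. The forward direction is a bookkeeping exercise identifying any given solution with the intersection $V$, while the backward direction reduces to showing that an $F$-basis of $V$ remains $F_0$-linearly independent in $V_0$, which in turn rests on a descent fact for subspaces defined over two subfields.

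For the forward direction, suppose $W$ is a solution to $\mathcal{V}$, so that we have compatible $F_i$-linear isomorphisms $W \otimes_F F_i \iso V_i$. Fixing an $F$-basis $w_1, \dots, w_n$ of $W$ and pushing each $w_j$ into $V_0$ via either route $W \to V_i \to V_0$ produces the same element, which therefore lies in $V_1 \cap V_2 = V$. The resulting $F$-linear map $W \to V$ is injective because its composition into $V_0$ sends the basis to an $F_0$-basis of $V_0$. For surjectivity, any $v \in V \subseteq V_0$ has a unique expansion with coefficients in $F_0$ with respect to this $V_0$-basis; but since $v \in V_i$, the uniqueness of coordinates against the corresponding $F_i$-basis of $V_i$ forces those coefficients into $F_i$ for each $i$, hence into $F_1 \cap F_2 = F$. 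Thus $V \cong W$, yielding both $\dim_F V = n$ and that $V$ itself is a solution.

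For the backward direction, fix an $F$-basis $v_1, \dots, v_n$ of $V$. The natural map $\phi_i: V \otimes_F F_i \to V_i$ is $F_i$-linear between spaces of $F_i$-dimension $n$, and since $\phi_0$ is the base change of each $\phi_i$ to $F_0$, it is enough to show $\phi_0$ is injective, i.e.\ that the $v_j$ remain $F_0$-linearly independent in $V_0$. Suppose not: then $\sum c_j v_j = 0$ in $V_0$ for some $(c_j) \in F_0^n \setminus \{0\}$. Writing the $v_j$ as columns of a matrix $A$ in an $F_1$-basis of $V_1$ makes $A$ have entries in $F_1$, and the $F_0$-space $K$ of such relations is the kernel of $A$, so $K = (K \cap F_1^n) \otimes_{F_1} F_0$. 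By the symmetric argument using $V_2$, the same $K$ is also defined over $F_2$. The key fact is then that any $F_0$-subspace of $F_0^n$ defined over both $F_1$ and $F_2$ is defined over $F$: the reduced row echelon form of a subspace is uniquely determined by the subspace, so reducing a basis once in $F_1^n$ and once in $F_2^n$ yields the same matrix, which must therefore have entries in $F_1 \cap F_2 = F$. Consequently $K$ contains a nonzero $F$-rational vector, giving a nontrivial $F$-linear dependence among the $v_j$, contradicting their $F$-linear independence.

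The main obstacle is the echelon-form descent step, namely that a subspace of $F_0^n$ defined over both $F_1$ and $F_2$ is defined over their intersection. The remaining steps are routine manipulation of bases and tensor products, but producing an $F$-rational relation from a merely $F_0$-rational one genuinely requires the field-theoretic input $F = F_1 \cap F_2$, and this is the one place where that hypothesis is actively used.
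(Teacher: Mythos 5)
Your proof is correct, and its overall skeleton matches the paper's: in one direction, identify any solution with the intersection $V = V_1 \cap V_2$; in the other, upgrade the natural maps $V \otimes_F F_i \to V_i$ to isomorphisms by a dimension count. The forward direction you give is a concrete, basis-level version of the paper's appeal to the fibre-product description of solutions (the paper invokes Proposition~\ref{harb} ``as rephrased,'' which is really the tensor-limit identity of Proposition~\ref{patchprob}); both arguments are fine.

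Where you genuinely add something is the backward direction. The paper writes that ``the inclusion $V \otimes_F F_i \hookrightarrow V_i$ \ldots\ is an isomorphism'' and closes by dimension count, but it does not explain \emph{why} the map $V \otimes_F F_i \to V_i$ is injective --- and this is the only non-formal point: for a general $F$-subspace $W$ of an $F_i$-vector space, the natural map $W \otimes_F F_i \to V_i$ can certainly have a kernel. Your reduction to the injectivity of $\phi_0: V \otimes_F F_0 \to V_0$, and then the descent lemma via reduced row echelon form --- that a subspace of $F_0^n$ rational over both $F_1$ and $F_2$ is rational over $F_1 \cap F_2$ --- is exactly the missing ingredient, and it makes visible precisely where the hypothesis $F = F_1 \cap F_2$ is used. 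This descent fact, incidentally, also shows unconditionally that $\dim_F (V_1 \cap V_2) \le \dim \mathcal{V}$, which gives the criterion a cleaner ``maximal dimension'' interpretation. In short, your write-up is the more self-contained of the two and supplies a justification the paper leaves to the reader.
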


\begin{proof}
If there is a solution $V'$ to the patching problem, then $V' = V_1 \cap V_2$ inside $V_0$ by Proposition~\ref{harb} as rephrased above in terms of patching problems, using the identification $V_1 \times_{V_0} V_2 = V_1 \cap V_2$.  Here ${\rm dim}_F V = {\rm dim}_{F_i} V_i = {\rm dim}\, \mathcal{V}$ since $V \otimes_F F_i$ is $F_i$-isomorphic to $V_i$.  

Conversely, if $\dim_F V = \dim\, \mathcal{V}$, then $\dim_{F_i} (V \otimes_F F_i) = \dim_F V = \dim_{F_i} V_i$; so the inclusion $V \otimes_F F_i
\hookrightarrow V_i$ induced by the natural map $V \hookrightarrow V_i$ is an
isomorphism of $V_i$-vector spaces for $i=0,1,2$.  Since $V$ is a fibre product, the 
three maps $V \hookrightarrow V_i$ are compatible; and so $V$ (together with these inclusions) is a
solution to the patching problem. 
\end{proof}

Concerning the last assertion of Proposition~\ref{harb}, we have the following more general result:

\begin{prop}\label{patchprob}
Let $\F = \{F_i\}_{i\in I}$ be a finite inverse system of fields whose inverse limit is a field $F$, and let $\mathcal{V} := \{V_i\}_{i \in I}$ be a patching problem for $\F$, with a solution $V$.  Then $V$ and the associated system of isomorphisms $V \otimes_F F_j \iso V_j$ (for $j \in I$) can be identified with the inverse limit $\displaystyle\lim_{\leftarrow}\, V_i$ (as $F$-vector spaces) along with the maps $\displaystyle\bigl(\lim_{\leftarrow}\, V_i \bigr) \otimes_F F_j\to V_j$.
\end{prop}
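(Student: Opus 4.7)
The plan is to construct the natural comparison map $V \to \lim_{\leftarrow} V_i$ coming from the solution structure, and to verify that it is an $F$-linear isomorphism which, after base change to each $F_j$, recovers the given isomorphism $V\otimes_F F_j \iso V_j$.

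Since $V$ is a solution to $\mathcal{V}$, one has compatible $F_i$-linear isomorphisms $\psi_i\colon V \otimes_F F_i \iso V_i$ intertwining $\mathrm{id}_V \otimes \iota_{ij}$ with $\nu_{ij}$. Composing with the natural inclusion $V \hookrightarrow V \otimes_F F_i$, I obtain $F$-linear maps $f_i\colon V \to V_i$ satisfying $\nu_{ij} \circ f_i = f_j$ whenever $i \succ j$. These assemble into a single $F$-linear map $f\colon V \to \lim_{\leftarrow} V_i$, and this is the map I will show is an isomorphism.

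To prove $f$ is bijective, I would fix an $F$-basis $e_1, \ldots, e_n$ of $V$. Then $\psi_i(e_1\otimes 1), \ldots, \psi_i(e_n\otimes 1)$ form an $F_i$-basis of $V_i$, and by the intertwining property, $\nu_{ij}$ sends the $k$-th such basis element of $V_i$ to the $k$-th such basis element of $V_j$. Hence $\mathcal V$ is isomorphic, as a patching problem, to the ``constant'' system $\{F_i^n\}$ whose transition maps are componentwise application of $\iota_{ij}$, and under this identification $f$ becomes the tautological map $F^n \to \lim_{\leftarrow} F_i^n$. Because inverse limit commutes with finite products and $F = \lim_{\leftarrow} F_i$ by hypothesis, this last map is a bijection, so $f$ is as well.

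Finally, for the compatibility of base changes, I would observe that by construction $\psi_j(v \otimes a) = a \cdot f_j(v)$, so under the identification $V = \lim_{\leftarrow} V_i$ via $f$, the isomorphism $V \otimes_F F_j \iso V_j$ coincides with the canonical map $\bigl(\lim_{\leftarrow} V_i\bigr)\otimes_F F_j \to V_j$ induced by the $j$-th projection. The only real work is tracking these compatibilities; no substantive new input is required beyond the definition of ``solution'' and the assumption $F = \lim_{\leftarrow} F_i$. (The three-field version of the argument is essentially what is recorded in the proof of Corollary~\ref{dimcriterion}.)
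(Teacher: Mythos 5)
Your proof is correct and follows essentially the same route as the paper: the paper simply invokes the $F$-vector space identity $V \otimes_F (\lim_{\leftarrow} F_i) = \lim_{\leftarrow}(V \otimes_F F_i)$ (commutation of tensoring a finite-dimensional space with a finite inverse limit), while you unpack this identity by choosing a basis of $V$ and reducing to the evident fact that $\lim_{\leftarrow} F_i^n = F^n$, then carefully track the compatibility of the base-change isomorphisms.
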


\begin{proof}
This is immediate from the $F$-vector space identity 
$\displaystyle V \otimes_F \bigl( \lim_\leftarrow F_i\bigr) = \lim_\leftarrow \bigl(  V \otimes_F F_i \bigr)$.
\end{proof}

A special case is that the index set $I$ of the inverse system is of the form $\{0,1,\dots,r\}$ with the partial ordering of $I$ given by $i \succ 0$ for $i=1,\dots,r$ and with no other order relations (as in Proposition~\ref{harb}, where $r=2$).  Then the above inverse limits can be interpreted as fibre products: 
$$
F = F_1 \times_{F_0} F_2 \times_{F_0} \cdots \times_{F_0} F_r,
\hskip .3in V = V_1 \times_{V_0} V_2 \times_{V_0} \cdots \times_{V_0} V_r.
$$
If we identify each field and each vector space with its image under the respective inclusion, we can also regard $F$ as the intersection of $F_1,\dots,F_r$ inside $F_0$ and similarly for $V$ and the $V_i$, generalizing the context of Proposition~\ref{harb} above. (This situation arises in Theorem~\ref{globalpatchingseveral} below.)  

Another special case of the above set-up arises in the context of Theorem~\ref{singularpatching} below.  Consider an 
index set $I$ of the form $I_0 \cup I_1 \cup I_2$, where the partial ordering has the property that for every $i_0 \in I_0$ there are unique elements $i_1 \in I_1$ and $i_2 \in I_2$ such that $i_1 \succ i_0$ and $i_2 \succ i_0$; and where there are no other relations in I.  For an inverse system of fields $\F = \{F_i\}$ indexed by $I$, giving a patching problem (up to isomorphism) for $\F$ is equivalent to giving a finite dimensional  $F_i$-vector space $V_i$ for each $i \in I_1 \cup I_2$, and giving an $F_{i_0}$-vector space isomorphism 
$\mu_{i_1,i_2,i_0}:V_{i_1} \otimes_{F_{i_1}} F_{i_0} \iso V_{i_2} \otimes_{F_{i_2}} F_{i_0}$ 
for every choice of $i_1,i_2,i_0$ such that each $i_j \in I_j$ and $i_1,i_2 \succ i_0$.  Namely, given a patching problem for $\F$, the collection of maps $\nu_{i_1,i_0}$ and $\nu_{i_2,i_0}$ (for $i_1,i_2 \succ i_0$) determines a collection of maps $\mu_{i_1,i_2,i_0}$ as above, given by $(\nu_{i_2,i_0} \otimes_{F_{i_2}} F_{i_0})^{-1}\circ (\nu_{i_1,i_0} \otimes_{F_{i_1}} F_{i_0})$.  Conversely, given $F_i$-vector spaces $V_i$ for $i \in I_1 \cup I_2$ and a collection of maps $\mu_{i_1,i_2,i_0}$ as above, for $i_0 \in I_0$ we may define $V_{i_0} := V_{i_2} \otimes_{F_{i_2}} F_{i_0}$ where $i_2$ is the unique index in $I_2$ with $i_2 \succ i_0$.  We can then let 
$\nu_{i_2,i_0}$ be the natural inclusion $V_{i_2} \hookrightarrow V_{i_2} \otimes_{F_{i_2}} F_{i_0} = V_{i_0}$; and let 
$\nu_{i_1,i_0}$ be the composition of the natural inclusion
$V_{i_1} \hookrightarrow V_{i_1} \otimes_{F_{i_1}} F_{i_0}$ with 
$\mu_{i_1,i_2,i_0}$.  In this way we obtain inverse transformations between families $\{\mu_{i_1,i_2,i_0}\}$ and families $\{\nu_{i_1,i_0}\}$, thereby establishing the asserted equivalence.

\section{Preliminary results} \label{preliminarysection}

\subsection{Matrix factorization}

Below we show two matrix factorization results that will be used in proving
our main results, Theorems~\ref{globalpatching} and \ref{clpatching}.  We begin with a lemma that reduces the problem to factoring matrices that are close to the identity.  This reduction parallels the strategy employed in \cite{voelklein}, Section~11.3, and \cite{haranjarden}, Section~4.

\begin{lemma}\label{conditionalfactorization} 
Let $\hat{R}_0$ be a complete discrete valuation ring 
with uniformizer $t$,
and let $\hat{R}_1,\hat{R}_2\leq \hat{R}_0$ be $t$-adically complete subrings that contain $t$.
Write $F_0, F_1$ for the
fraction fields of $\hat R_0, \hat R_1$, and assume that $\hat R_1/t\hat R_1$ is a domain whose fraction field 
equals $\hat R_0/t\hat R_0$.  
\renewcommand{\theenumi}{\alph{enumi}}
\begin{enumerate}
\item 
Then $R_0 := \hat R_0 \cap F_1 \subset F_0$ is $t$-adically dense in $\hat R_0$. \label{density}
\item
Suppose that for each $A \in \Gl_n(\hat R_0)$
satisfying $A \equiv I \; (\mod\, t\Mat_n(\hat R_0))$, there  exist $A_1 \in \Gl_n(F_1)$,
$A_2 \in \Gl_n(\hat R_2)$ such that $A=A_1A_2$.
Then the same conclusion holds for all matrices $A \in \Mat_n(\hat R_0)$ with non-zero determinant. \label{factoring}
\end{enumerate}
\end{lemma}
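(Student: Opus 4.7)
The plan is to prove part~(a) by successive approximation, exploiting the hypothesis on residue fields, and then to derive part~(b) by approximating an arbitrary $A$ by a matrix $B$ over $R_0$ and reducing to the near-identity case covered by the hypothesis.

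For part~(a), I would first establish the base case modulo $t$: given $x \in \hat R_0$, the hypothesis that $\hat R_0/t\hat R_0$ is the fraction field of $\hat R_1/t\hat R_1$ lets me write its residue $\bar x$ as $\bar a/\bar b$ with $a,b \in \hat R_1$ and $\bar b \neq 0$. Since $\hat R_0$ is a DVR with residue field $\hat R_0/t\hat R_0$, the nonvanishing of $\bar b$ forces $b \in \hat R_0^{\times}$, so $y := a/b$ lies in $F_1 \cap \hat R_0 = R_0$ and reduces to $\bar x$. I would then iterate: using that $t \in R_0$ and that $R_0$ is a subring of $\hat R_0$, successively applying the base step to the remainders $(x - y)/t$, etc., produces elements $z_1, z_2, \dots \in R_0$ such that the partial sums $z_1 + t z_2 + \cdots + t^{n-1} z_n \in R_0$ approximate $x$ modulo $t^n$ for every $n$.

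For part~(b), suppose $A \in \Mat_n(\hat R_0)$ has $\det A \neq 0$, and let $m$ be the $t$-adic valuation of $\det A$. Applying part~(a) entrywise, I would choose $B \in \Mat_n(R_0)$ with $B \equiv A \pmod{t^{m+1} \Mat_n(\hat R_0)}$. Then $\det B \equiv \det A \pmod{t^{m+1}}$, so $\det B$ has $t$-adic valuation exactly $m$; in particular $B \in \Gl_n(F_1)$. The adjugate formula shows the entries of $B^{-1}$ have $t$-adic valuation at least $-m$, and hence the matrix $C := B^{-1} A = I + B^{-1}(A - B)$ satisfies $C - I \in t\Mat_n(\hat R_0)$. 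Thus $C \in \Gl_n(\hat R_0)$ with $C \equiv I \pmod{t \Mat_n(\hat R_0)}$. The hypothesis then yields $C = C_1 C_2$ with $C_1 \in \Gl_n(F_1)$ and $C_2 \in \Gl_n(\hat R_2)$, so $A = BC = (B C_1) C_2$ is the desired factorization with $A_1 := B C_1 \in \Gl_n(F_1)$ and $A_2 := C_2 \in \Gl_n(\hat R_2)$.

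The only real subtlety is the choice of approximation depth in part~(b): approximating $A$ by $B$ merely modulo $t$ would not suffice, because the entries of $B^{-1}$ can have valuation as low as $-m$ and so $B^{-1}(A-B)$ would carry no forced positive valuation. Pushing the approximation to depth $m+1$ exactly offsets this loss and places $C - I$ inside $t \Mat_n(\hat R_0)$. Beyond that accounting step, the argument is a direct reduction, mirroring the standard ``multiply by an approximation and factor the near-identity remainder'' trick used in Cartan-style matrix factorization proofs.
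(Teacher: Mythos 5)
Your proof is correct and follows essentially the same strategy as the paper: for (a), build the approximant inductively, writing each successive residue as a fraction over $\hat R_1/t\hat R_1$ and lifting; for (b), left-multiply $A$ by a matrix over $F_1$ to land in $I + t\Mat_n(\hat R_0)$ and invoke the hypothesis. The only cosmetic difference is that you approximate $A$ itself to depth $m+1$ and use the adjugate to bound the valuations of $B^{-1}$, whereas the paper approximates $t^rA^{-1}$ (where $A^{-1} \in t^{-r}\Mat_n(\hat R_0)$) and rescales — both amount to the same reduction.
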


\begin{proof}
(\ref{density})
To prove this, we will show by induction that for every $f \in \hat R_0$ and every $m \ge 0$, there is an element $f_m \in R_0$ such that $f-f_m \in t^m\hat R_0$.  This is trivial for $m=0$, taking $f_m=0$.  Suppose the assertion holds for $m-1$, and write $f-f_{m-1}=t^{m-1}e$, with $e \in \hat R_0$.  The reduction $\bar e \in \hat R_0/t\hat R_0$ modulo $t\hat R_0$ lies in the fraction field of $\hat R_1/t\hat R_1$, and so may be written as $\bar g/\bar h$, with $\bar g, \bar h \in \hat R_1/t\hat R_1$ and $\bar h \ne 0$.  Pick $g,h \in \hat R_1$ that reduce to $\bar g, \bar h$ modulo $t\hat R_1$.  Since $\hat R_0/t\hat R_0$ is a field, $\bar h$ is a unit there, and so $h$ is a unit in the $t$-adically complete ring $\hat R_0$ (which is the valuation ring of $F_0$).  Thus $g/h \in \hat R_0$, and $e-g/h \in t\hat R_0$.  Taking $f_m = f_{m-1} + t^{m-1}g/h \in \hat R_0 \cap F_1 = R_0$, we have $f-f_m \in t^m\hat R_0$, proving part~(\ref{density}).

(\ref{factoring})  
Let $A \in \Mat_n(\hat R_0)$ be a matrix with non-zero determinant.  So $A^{-1} \in t^{-r}\Mat_n(\hat R_0) \subset \Mat_n(F_0)$ for some $r \ge 0$.  Since $R_0$ is $t$-adically dense in $\hat R_0$ by part (\ref{density}),
there is a $C_0 \in \Mat_n(R_0)$ that is congruent to $t^rA^{-1}\in \Mat_n(\hat R_0)$ modulo $t^{r+1}\Mat_n(\hat R_0)$.  Let $C = t^{-r}C_0 \in t^{-r}\Mat_n(R_0)\subset \Mat_n(F_1)$.  Then $C - A^{-1} \in t\Mat_n(\hat R_0)$ and so $CA - I \in t\Mat_n(\hat R_0)$.  Hence $CA \in \GL_n(\hat R_0)$, and in particular, $C$ has non-zero determinant; i.e.\ $C \in \GL_n(F_1)$.  By hypothesis, there exist $A_1' \in \GL_n(F_1)$, $A_2 \in \GL_n(\hat R_2)$ such that $CA=A_1'A_2$ in $\GL_n(F_0)$.  Let $A_1 = C^{-1}A_1' \in \GL_n(F_1)$.  Then $A = A_1A_2$, as asserted. 
\end{proof}

Lemma~\ref{conditionalfactorization} will be used in conjunction with the following proposition, which provides a condition under which the factorization hypothesis of the above lemma is satisfied.

\begin{prop}\label{factorizationcloseto1}
Let $T$ be a complete discrete valuation ring 
with uniformizer $t$, let $\hat{R}_0$ be a $t$-adically complete $T$-algebra which is a domain,
and let $\hat{R}_1,\hat{R}_2\leq \hat{R}_0$ be $t$-adically complete subrings containing $T$, with fraction fields $F_i$ ($i=0,1,2$).
Assume that $M_1 \subset F_0$ is a $t$-adically complete (e.g.\ finitely generated)
$\hat{R}_1$-submodule of $\hat R_0 \cap F_1$ such that for every $a\in \hat{R}_0$, there exist $a_1\in M_1$ and $a_2\in \hat{R}_2$ 
for which $a\cong a_1+a_2 \;(\mod t\hat R_0)$.
Then every 
$A\in \Gl_n(\hat{R}_0)$ with $A\cong I \;(\mod\, t\Mat_n(\hat R_0))$
can be written as $A=A_1A_2$ with $A_1\in
\Mat_n(M_1)$  and $A_2\in \Gl_n(\hat{R}_2)$.
Necessarily, $A_1 \in \Gl_n(F_1)$.
\end{prop}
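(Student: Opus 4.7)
The plan is a Newton-type iterative construction that produces $A_1$ as a $t$-adic limit inside $\Mat_n(M_1)$ and $A_2$ as a convergent partial product in $\GL_n(\hat R_2)$. Inductively I build matrices $X^{(k)} \in \Mat_n(M_1)$ and $Z_0, \ldots, Z_{k-1} \in \GL_n(\hat R_2)$ with $Z_j \equiv I \pmod{t^{j+1}}$, such that $A^{(k)} := A \cdot (Z_{k-1} \cdots Z_0)^{-1}$ satisfies $A^{(k)} \equiv X^{(k)} \pmod{t^{k+1} \Mat_n(\hat R_0)}$ and $X^{(k+1)} \equiv X^{(k)} \pmod{t^{k+1} \Mat_n(M_1)}$. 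The $t$-adic completeness of $M_1$ and of $\hat R_2$ will then deliver the limits $A_1$ and $A_2$, while an inductive invariant $A^{(k)} \equiv I \pmod t$ will ensure $A_2 \equiv I \pmod t$, hence invertible.

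For the base case, take $X^{(0)} = I$, which lies in $\Mat_n(M_1)$ under the assumption $1 \in M_1$ (as holds in the intended applications); then $A^{(0)} = A \equiv X^{(0)} \pmod t$. For the inductive step, write $A^{(k)} - X^{(k)} = t^{k+1} B^{(k)}$ with $B^{(k)} \in \Mat_n(\hat R_0)$, and apply the additive decomposition hypothesis entrywise to obtain $B^{(k)} \equiv B_1^{(k)} + B_2^{(k)} \pmod t$ with $B_1^{(k)} \in \Mat_n(M_1)$ and $B_2^{(k)} \in \Mat_n(\hat R_2)$. Set
\[
X^{(k+1)} := X^{(k)} + t^{k+1} B_1^{(k)}, \qquad Z_k := I + t^{k+1} B_2^{(k)}, \qquad A^{(k+1)} := A^{(k)} Z_k^{-1}.
\]
Here $X^{(k+1)} \in \Mat_n(M_1)$ because $M_1$ is an $\hat R_1$-module and $t \in \hat R_1$, and $Z_k \in \GL_n(\hat R_2)$ since $Z_k \equiv I \pmod t$ with $\hat R_2$ being $t$-adically complete.

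The main technical step is verifying $A^{(k+1)} \equiv X^{(k+1)} \pmod{t^{k+2}}$. Using $Z_k^{-1} \equiv I - t^{k+1} B_2^{(k)} \pmod{t^{2k+2}}$ together with the inductive invariant $A^{(k)} \equiv I \pmod t$ (which gives $A^{(k)} B_2^{(k)} \equiv B_2^{(k)} \pmod t$, so that $A^{(k)} B_2^{(k)}$ and $B_2^{(k)}$ agree modulo $t^{k+2}$ after multiplication by $t^{k+1}$), one computes
\[
A^{(k+1)} \equiv A^{(k)} - t^{k+1} B_2^{(k)} \equiv X^{(k)} + t^{k+1}(B^{(k)} - B_2^{(k)}) \equiv X^{(k)} + t^{k+1} B_1^{(k)} = X^{(k+1)} \pmod{t^{k+2}}.
\]
The invariant $A^{(k+1)} \equiv I \pmod t$ propagates since both $A^{(k)}$ and $Z_k$ are $\equiv I \pmod t$, so the induction continues.

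Passing to the limit, $X^{(k)} \to A_1 \in \Mat_n(M_1)$ by $t$-adic completeness of $M_1$. The partial products $R_k := Z_{k-1} \cdots Z_0 \in \GL_n(\hat R_2)$ satisfy $R_{k+1} - R_k = (Z_k - I) R_k \in t^{k+1} \Mat_n(\hat R_2)$, so by $t$-adic completeness of $\hat R_2$ they converge to some $A_2 \in \Mat_n(\hat R_2)$; since $A_2 \equiv I \pmod t$, we have $A_2 \in \GL_n(\hat R_2)$. Taking the limit in $A = A^{(k)} R_k$ gives $A = A_1 A_2$. The final assertion $A_1 \in \GL_n(F_1)$ follows from $A_1 = A A_2^{-1}$: its entries lie in $F_1$ (as $M_1 \subseteq F_1$), and its determinant is nonzero (as $A, A_2 \in \GL_n(F_0)$), so $\det A_1 \in F_1^\times$.
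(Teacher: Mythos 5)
Your proof is correct and follows essentially the same iterative approximation scheme as the paper: both construct the factors by successive corrections supplied by the additive-decomposition hypothesis, using the invariant $\equiv I \pmod t$ to control the cross terms. The only structural difference is cosmetic: the paper builds $A_2$ as the limit of an additively defined sequence $C_n = C_{n-1} + t^n C_n'$, whereas you build it as a convergent infinite product $\prod Z_k$ with $Z_k = I + t^{k+1}B_2^{(k)}$; these encode the same iteration in different bookkeeping.

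One point worth highlighting: you explicitly flag that the base case $X^{(0)} = I \in \Mat_n(M_1)$ requires $1 \in M_1$. The paper's proof has exactly the same tacit dependence --- there $B_0 = I$, so $B_n \in I + \Mat_n(M_1)$ for all $n$, and the asserted conclusion $A_1 \in \Mat_n(M_1)$ likewise needs $1 \in M_1$. This hypothesis is not stated in the proposition but does hold in every application in the paper (in Proposition~\ref{globalsum}, $M_1 = L(\hat U_1, N\hat P)$ with $N\geq 0$ contains the constants; in Lemma~\ref{localsum}, $M_1 = \hat R_1$). Your decision to call this out explicitly is appropriate and in fact makes the logical dependency clearer than the original.
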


\begin{proof}
The proof proceeds by constructing $A_1$ and $A_2$, respectively, as the limits of a sequence of matrices $B_i$ with coefficients in 
$M_1$, and a sequence of matrices $C_i$ with coefficients in $\hat{R}_2$, such that   
\begin{alignat*}{2}
B_0&=C_0= &&I,\\
A&\cong B_iC_i \; &&(\mod t^{i+1}\Mat_n(\hat R_0)),\\
B_i&\cong B_{i-1} \; &&(\mod t^{i}\Mat_n(M_1)),\\
C_i&\cong C_{i-1} \; &&(\mod t^{i}\Mat_n(\hat R_2)).
\end{alignat*}
By $t$-adic completeness, these limits exist and $A_2 \in \Gl_n(\hat R_2)$ since $A_2 \equiv I \; (\mod t\hat R_2)$.  Also, 
$A_1 \in \GL_n(F_1)$ because $M_1 \subset F_1$ and since $A,A_2$ have non-zero determinant.

We now construct this sequence inductively.  
So suppose for some $n \ge 1$ and for all $i \leq n-1$
that $B_i, C_i$ have already been constructed, satisfying the above conditions; and we wish to construct $B_n, C_n$.

By the inductive hypothesis, 
$$A-B_{n-1}C_{n-1}=t^n\tilde{A}_n$$
for some $\tilde{A}_n$ with coefficients in $\hat{R}_0$.
By the hypothesis of the proposition (applied to the entries of $\tilde A_n$), there exist matrices $B_n'\in \Mat_n(M_1)$ and  $C_n' \in \Mat_n(\hat{R}_2)$ so that  
$$\tilde{A}_n\cong B_n'+C_n' \; (\mod t\Mat_n(\hat R_0)),$$ 
and thus
$$
t^{n}\tilde{A}_n\cong t^nB_n'+t^{n}C_n' \; (\mod t^{n+1}\Mat_n(\hat R_0)).
$$
So if we define 
\begin{align*}
B_n & = B_{n-1} +  t^{n}B_n'\\
C_n&=C_{n-1}+t^{n}C_n',
\end{align*}
then 
\begin{alignat*}{2}
A&= B_{n-1}C_{n-1} + t^{n}\tilde{A}_n \\
&\cong B_{n-1}C_{n-1} + t^{n}B_n'+t^n C_n' &\quad&\; (\mod t^{n+1}\Mat_n(\hat R_0))\\
&\cong (B_{n-1} + t^nB_n')(C_{n-1} + t^nC_n')&& \; (\mod t^{n+1}\Mat_n(\hat R_0))\\
&\cong B_nC_n &&\; (\mod t^{n+1}\Mat_n(\hat R_0)),
\end{alignat*}
where the second to last congruence uses that 
\begin{alignat*}{2}
B_{n-1} \; &\cong B_0  &\; &\cong I \; (\mod t\Mat_n(M_1))\qquad \text{and}\\
C_{n-1} \; &\cong C_0 &\; &\cong I \; (\mod t\Mat_n(\hat R_2)).
\end{alignat*}
This finishes the proof.
\end{proof}

In Proposition~\ref{globalsum} and Lemma~\ref{localsum} it will be shown that the hypothesis of Proposition~\ref{factorizationcloseto1} (i.e.\ the sum decomposition with respect to some module $M_1$) holds in the situations of our main results.

\subsection{An intersection lemma}
Let $T$ be a complete domain with $(t)\subset T$ prime (e.g.\ a complete discrete valuation ring $T$ with uniformizer $t$).  
Let $M\subseteq M_1,M_2\subseteq M_0$ be $T$-modules
with $M \cap tM_i = tM$ and $M_i \cap tM_0 = tM_i$. Then $M/tM = M/(M \cap tM_i) \subseteq M_i/tM_i$ for $i=0,1,2$, and similarly $M_i/tM_i \subseteq M_0/tM_0$ for $i=1,2$. Hence we can form the intersection $M_1/tM_1\cap M_2/tM_2$ in $M_0/tM_0$; and this intersection contains $M/tM$.  Under certain additional hypotheses, the next lemma asserts that if this containment is actually an equality then $M_1\cap M_2=M$.

\begin{lemma}\label{generalintersect}
Let $T$ be a complete domain with $(t) \subset T$ prime, and let
$M\subseteq M_1,M_2\subseteq M_0$ be $T$-modules with no $t$-torsion such that 
$M$ is $t$-adically complete, 
with $M \cap tM_i = tM$ and $M_i \cap tM_0 = tM_i$, 
and with $\bigcap_{j=1}^\infty t^jM_0 = (0)$. Assume that $M_1/tM_1\cap M_2/tM_2=M/tM$. Then
$M_1\cap M_2=M$ (where the intersection is taken inside $M_0$).
\end{lemma}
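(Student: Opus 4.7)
The plan is to take an arbitrary element $x \in M_1 \cap M_2$ and successively subtract off elements of $M$, peeling off one factor of $t$ at each step, then assemble the pieces via $t$-adic completeness of $M$ to realize $x$ itself as an element of $M$.

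More precisely, I would argue by induction to construct sequences $y_0,y_1,\ldots \in M$ and $w_0, w_1, \ldots \in M_1 \cap M_2$ with $w_0 = x$, such that $w_n - y_n = t w_{n+1}$ for every $n$. For the inductive step, given $w_n \in M_1 \cap M_2$, its reduction $\bar w_n$ lies in $(M_1/tM_1) \cap (M_2/tM_2)$ inside $M_0/tM_0$; here I use the containments $M_i/tM_i \hookrightarrow M_0/tM_0$, which are injective precisely because $M_i \cap tM_0 = tM_i$. By hypothesis this intersection equals $M/tM$, so there exists $y_n \in M$ with $w_n - y_n \in tM_0$. Since $w_n - y_n \in M_i$ (as $y_n \in M \subseteq M_i$), we get $w_n - y_n \in M_i \cap tM_0 = tM_i$ for $i=1,2$; absence of $t$-torsion in $M_0$ guarantees a unique $w_{n+1} \in M_0$ with $w_n - y_n = tw_{n+1}$, and this $w_{n+1}$ lies in $M_1$ and in $M_2$, completing the induction. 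Telescoping gives
\[
x = y_0 + t y_1 + t^2 y_2 + \cdots + t^{n-1} y_{n-1} + t^n w_n
\]
for every $n \ge 1$.

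To finish, I would set $z_n = \sum_{k=0}^{n-1} t^k y_k \in M$; this sequence is Cauchy in the $t$-adic topology on $M$, and since $M$ is assumed $t$-adically complete it converges to some $z \in M$ with $z - z_n \in t^n M$. Combining with the identity $x - z_n = t^n w_n \in t^n M_0$ yields $x - z \in t^n M_0$ for every $n$, whence $x - z \in \bigcap_j t^j M_0 = (0)$ by hypothesis. Therefore $x = z \in M$, as desired. The only delicate point is keeping track that the partial sums really lie in $M$ (and not merely in $M_0$) so that completeness of $M$ can be invoked; this is automatic since each $y_n$ is chosen in $M$ at the moment of its construction. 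Apart from that bookkeeping, the argument is a standard successive approximation in a complete filtered setting.
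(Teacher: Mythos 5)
Your proof is correct and follows the same broad strategy as the paper's: show that $M$ is $t$-adically dense in $N := M_1 \cap M_2$, then invoke completeness of $M$ together with $t$-adic separatedness of $M_0$ to upgrade density to equality. Where the two diverge is in how density is established. The paper first replaces $M_0$ by $M_1+M_2$, then compares two short exact sequences --- one coming from the intersection hypothesis, the other from tensoring $0 \to N \to M_1 \times M_2 \to M_0 \to 0$ with $T/(t)$ --- to deduce that $M/tM \to N/tN$ is an isomorphism. From that it bootstraps to $N = M + t^jN$ and, crucially, to $M \cap t^jN = t^jM$; the latter is what makes an arbitrarily chosen sequence of $j$-th order approximants $m_j \in M$ into an $M$-Cauchy sequence. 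Your argument is more direct and slightly leaner: by peeling off one factor of $t$ at a time you build a single explicit approximating sequence $z_n = \sum_{k<n} t^k y_k$ whose successive differences lie in $t^nM$ \emph{by construction}, so the sequence is $M$-Cauchy for free and you never need the injectivity statement $M \cap t^jN = t^jM$, nor the enlargement of $M_0$. Both routes rest on the same three hypotheses in the same roles (the mod-$t$ intersection for density, completeness of $M$ for convergence, and $\bigcap_j t^jM_0 = 0$ for uniqueness of limits); yours simply has fewer intermediate moving parts.
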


\begin{proof} 
After replacing $M_0$ by its submodule $M_1 + M_2  \subseteq M_0$, we may assume that those two modules are equal.  So by
the intersection hypothesis, we have an exact sequence
$$0 \to M/tM \to M_1/tM_1 \times M_2/tM_2 \to M_0/tM_0 \to 0,$$
via the diagonal inclusion of $M/tM$ and the subtraction map to $M_0/tM_0$.  Let $N =  M_1 \cap M_2$ inside $M_0$.  So $M \subseteq N$, and
$tN = tM_1 \cap tM_2$ because $M_0$ has no $t$-torsion.  Tensoring the exact sequence 
$0 \to N \to M_1 \times M_2 \to M_0 \to 0$ over $T$ with $T/(t)$ yields the exact sequence $N/tN \to M_1/tM_1 \times M_2/tM_2 \to M_0/tM_0 \to 0.$   But the first map in this sequence is injective because 
$tN = tM_1 \cap tM_2$; so 
$$0 \to N/tN \to M_1/tM_1 \times M_2/tM_2 \to M_0/tM_0 \to 0$$
is exact.
Hence the natural map $M/tM \to N/tN$ is an isomorphism; thus $M \cap tN = tM$ and $N = M + tN$.  For all $j \ge 0$, $M \cap t^{j+1}N 
= M \cap tN \cap t^{j+1}N = tM \cap t^{j+1}N = t(M \cap t^jN)$, where the last equality uses that $N$ has no $t$-torsion.  So by induction,  for all $j \ge 0$ we have that $M \cap t^jN = t^jM$, and also that $N = M+t^jN$.   

So for $n \in  M_1 \cap M_2 = N$, there is a sequence of elements $m_j \in M$ with $n-m_j \in t^jN$.  If $h>j$ then $m_h-m_j \in M \cap t^jN = t^jM$.  Since $M$ is $t$-adically complete, there exists an element $m \in M$ and a sequence $i_j \to \infty$ such that $m-m_j \in t^{i_j}M$ for all $j$.  We may assume $i_j \le j$ for all $j$.
Thus $n-m = (n-m_j) - (m-m_j) \in t^{i_j}N \subseteq t^{i_j}M_0$ for all $j$.  But $\bigcap_{j=1}^\infty t^{i_j}M_0 = \bigcap_{j=1}^\infty t^jM_0 = (0)$. So $n-m=0$ and $n=m \in M$.
\end{proof}

\section{The global case}\label{globalsection}
We now turn to proving our patching result in a global context, in which we consider a smooth projective curve $\hat X$ over a complete discrete valuation ring $T$, and use patches that are obtained from subsets $U_1,U_2$ of the closed fibre $X$ of $\hat X$.  These subsets are permitted to be Zariski open subsets of $X$, but can also be more general.  The strategy is to show that the factorization and intersection conditions of Section~\ref{setup} hold, employing the results of Section~\ref{preliminarysection}.

\subsection{Factorization}

In order to apply the results from the last section to patching, we will need to show that
the hypothesis of Proposition~\ref{factorizationcloseto1} is satisfied, i.e., that there is a certain 
additive decomposition.

As before, $T$ is a complete discrete valuation ring with uniformizer $t$.  Let $\hat{X}$ be a projective 
$T$-curve with closed fibre $X$, and let  
$P\in X$ be a closed point at which $\hat X$ is smooth.  A {\bf lift} of $P$ to $\hat X$ is an effective prime divisor $\hat P$ on $\hat X$ whose restriction to $X$ is the divisor $P$.  Such a lift always exists.  Specifically, given $P$, let $\bar{\pi}$ be a uniformizer of the local ring ${\mathcal O}_{X,P}$
and let $\pi \in {\mathcal O}_{\hat{X},P}$ be a lift of $\bar{\pi}$. Then the maximal ideal of 
${\mathcal O}_{\hat{X},P}$ is generated by $\pi$ and $t$, and we may take $\hat P$ to be the connected 
component of the zero locus of $\pi$ that contains $P$. 

More generally, if $D = \sum_{i=1}^r a_iP_i$ is an effective divisor on $X$, and if $\hat P_i$ is a lift of $P_i$ to $\hat X$ as above, we call $\hat D := \sum_{i=1}^r a_i\hat P_i$ a {\bf lift} of $D$ to $\hat X$.

The following two propositions are preliminary technical results, which can be avoided in the special case that $T = k[[t]]$ for some field $k$ and $\hat X = X \times_k k[[t]]$.  (Namely there, if we choose the lift $\hat P = P \times_k k[[t]]$, then the next two propositions hold easily by extending constants from $k$ to $k[[t]]$.  See also \cite{ober} for a discussion of this special case.)

As usual, for a Cartier divisor $D$ on a scheme $Z$, we let $L(Z,D) = \Gamma(Z,{\cal O}_Z(D))$, the set of rational functions on $Z$ whose pole divisor is at most $D$.

\begin{prop}\label{inverselimit}
Let $T$ be a complete discrete valuation ring with uniformizer $t$,
and let $\hat{X}$ be a smooth connected projective $T$-curve with closed fibre $X$ of genus $g$. 
Let $D$ be an effective divisor on $X$ and let $\hat D$ be a lift of $D$ to $\hat X$. Then
\renewcommand{\theenumi}{\alph{enumi}}
\begin{enumerate}
\item $L(\hat{X},\hat{D})$ is a finitely generated $T$-module, and\label{inverselimit1}
\item if $D$ has degree $> 2g-2$, the sequence 
$$0 \to tL(\hat{X},\hat{D}) \to L(\hat{X},\hat{D})\to L(X,D) \to 0$$ 
is exact.\label{inverselimit2}
\end{enumerate}
\end{prop}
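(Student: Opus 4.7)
The plan is to reinterpret both assertions as statements in coherent cohomology on the proper $T$-scheme $\hat X$. Let $\pi : \hat X \to \Spec T$ be the structure morphism; by the definition preceding the statement, $L(\hat X, \hat D) = H^0(\hat X, \mathcal{O}_{\hat X}(\hat D))$ and $L(X, D) = H^0(X, \mathcal{O}_X(D))$. Part (a) is then the standard finiteness theorem: since $\pi$ is proper and $\mathcal{O}_{\hat X}(\hat D)$ is coherent, $R^i\pi_*\mathcal{O}_{\hat X}(\hat D)$ is coherent on $\Spec T$ for every $i$, so $H^i(\hat X, \mathcal{O}_{\hat X}(\hat D))$ is a finitely generated $T$-module for every $i$; taking $i=0$ gives (a).

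For part (b), the line bundle $\mathcal{O}_{\hat X}(\hat D)$ is flat over $T$ (since $\hat X$ is smooth over $T$), and its restriction to the closed fibre is $\mathcal{O}_X(D)$, because $\hat D$ was chosen as a lift of $D$. Multiplication by $t$ is therefore injective, giving a short exact sequence of coherent sheaves
\begin{equation*}
0 \to \mathcal{O}_{\hat X}(\hat D) \xrightarrow{\;t\;} \mathcal{O}_{\hat X}(\hat D) \to \mathcal{O}_X(D) \to 0
\end{equation*}
on $\hat X$. The associated long exact cohomology sequence reads
\begin{equation*}
0 \to H^0(\hat X,\mathcal{O}_{\hat X}(\hat D)) \xrightarrow{\;t\;} H^0(\hat X,\mathcal{O}_{\hat X}(\hat D)) \to H^0(X,\mathcal{O}_X(D)) \to H^1(\hat X,\mathcal{O}_{\hat X}(\hat D)) \xrightarrow{\;t\;} H^1(\hat X,\mathcal{O}_{\hat X}(\hat D)) \to \cdots,
\end{equation*}
so it suffices to prove $H^1(\hat X,\mathcal{O}_{\hat X}(\hat D))=0$ to obtain the desired short exact sequence.

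This vanishing is the main step, and is where the hypothesis $\deg D > 2g-2$ enters. By Serre duality on the smooth connected projective curve $X$ of genus $g$, $H^1(X, \mathcal{O}_X(D))^\vee \cong H^0(X, \Omega_X \otimes \mathcal{O}_X(-D))$, and the right-hand side vanishes because $\deg(\Omega_X \otimes \mathcal{O}_X(-D)) = 2g-2 - \deg D < 0$. Looking further along the long exact sequence, $H^1(X,\mathcal{O}_X(D))$ is followed by $H^2(\hat X, \mathcal{O}_{\hat X}(\hat D))$, which is zero because $\Spec T$ is affine and $R^i\pi_*$ vanishes for $i \ge 2$ (fibres are one-dimensional). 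Thus multiplication by $t$ is surjective on the $T$-module $H^1(\hat X, \mathcal{O}_{\hat X}(\hat D))$, which is finitely generated by the same argument used in part (a). Since $T$ is a local ring with maximal ideal $(t)$, Nakayama's lemma forces $H^1(\hat X,\mathcal{O}_{\hat X}(\hat D))=0$, and the long exact sequence collapses to exactly the short exact sequence claimed in (b). The only subtle input is the identification $\mathcal{O}_{\hat X}(\hat D)|_X = \mathcal{O}_X(D)$, which however is immediate from the way the lift $\hat D$ is constructed just before the statement.
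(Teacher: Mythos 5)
Your proof is correct, and it takes a genuinely different (more cohomological) route than the paper's. The paper's proof of part~(b) never invokes the long exact sequence in sheaf cohomology: it applies Riemann--Roch to both the generic fibre $X^\circ$ and the closed fibre $X$ to see that $L(X^\circ,D^\circ)$ and $L(X,D)$ both have dimension $r = \deg D + 1 - g$, then uses the fact that $T$ is a PID to conclude that the finitely generated torsion-free $T$-module $L(\hat X, \hat D)$ is free of rank $r$, and finishes with a direct dimension count showing the injection $L(\hat X,\hat D)/tL(\hat X,\hat D) \hookrightarrow L(X,D)$ is surjective. Your argument instead runs the short exact sequence $0 \to \mathcal{O}_{\hat X}(\hat D) \xrightarrow{t} \mathcal{O}_{\hat X}(\hat D) \to \mathcal{O}_X(D) \to 0$ through the long exact sequence, kills $H^1(\hat X, \mathcal{O}_{\hat X}(\hat D))$ by Nakayama's lemma using surjectivity of multiplication by $t$ (which follows from $H^1(X,\mathcal{O}_X(D))=0$), and reads off the claimed short exact sequence. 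Both proofs rest on the same underlying vanishing $H^1(X,\mathcal{O}_X(D))=0$ --- you cite Serre duality directly; the paper has it built into its use of Riemann--Roch --- but the packaging is quite different, and the paper's is deliberately chosen to stay as low-tech as possible.

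Two small observations about your write-up. First, the appeal to $H^2(\hat X, \mathcal{O}_{\hat X}(\hat D))=0$ is unnecessary: once you know $H^1(X,\mathcal{O}_X(D))=0$, exactness of $H^1(\hat X) \xrightarrow{t} H^1(\hat X) \to H^1(X)$ already forces multiplication by $t$ to be surjective on $H^1(\hat X)$, and that is all you use; the $H^2$ term never enters. Second, the justification you gave for that $H^2$ vanishing (``fibres are one-dimensional, so $R^i\pi_*$ vanishes for $i\ge 2$'') is itself a nontrivial fact --- it is essentially Zariski's theorem on formal functions (\cite{hartshorne}, III, Theorem~11.2), precisely the ingredient the paper's Remark after this proposition says it is trying to avoid. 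Since your argument does not actually need it, this is a harmless detour, but worth being aware that it is not the ``elementary'' fact the parenthetical makes it sound. With the $H^2$ sentence deleted, your proof is clean, correct, and closer in spirit to the alternative proof the paper mentions but declines to give.
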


\begin{proof}
(\ref{inverselimit1}) Since $\hat X$ is projective over $T$, the $T$-module $L(\hat X, \hat D) = \Gamma(\hat X, \O(\hat D))$ is 
finitely generated (\cite{hartshorne}, II, Theorem~5.19 and Remark~5.19.2); so the first part holds.

(\ref{inverselimit2}) Suppose that $D$ is an effective divisor on $X$ of degree $d > 2g-2$, with a lift $\hat D$ to $\hat X$.  The general fibre $X^\circ$ of $\hat X$ has genus equal to $g$ because the arithmetic genus is constant for a flat family of curves, by 
\cite{hartshorne}, III, Corollary~9.10.  
Also, $\hat D$ is flat over the discrete valuation ring $T$, since it is torsion-free because its support does not contain the closed fibre $X$.  So by the same result in \cite{hartshorne} on constancy of invariants in flat families, the degree $d$ of the closed fibre $D$ of $\hat D$ is equal to the degree of the general fibre $D^\circ$, viewed as a divisor on $X^\circ$.  

Applying the Riemann-Roch Theorem (\cite{serreagcf}, Chapter II, Theorem~3) to the curves $X^\circ$ and $X$, both $L(X^\circ,D^\circ)$ and $L(X,D)$ are vector spaces of dimension $r := d+1-g$ 
over the fraction field $K$ of $T$ and the residue field $k$ of $T$, respectively.
Since $L(\hat X, \hat D)$ is a submodule of the function field $F$ of $\hat X$, it is torsion-free.  
But $T$ is a principal ideal domain and $L(\hat X,\hat D) \otimes_T K = L(X^\circ, D^\circ)$ 
is an $r$-dimensional $K$-vector space; so the finitely generated torsion-free $T$-module $L(\hat X, \hat D)$ 
is free of rank $r$.  
Thus the injection $L(\hat X,\hat D)/tL(\hat{X},\hat D) \rightarrow L(X,D)$ induced by the map 
$L(\hat X,\hat D) \to L(X,D)$ is an isomorphism of $k$-vector spaces, which implies the result.
\end{proof}

\begin{remark}
Alternatively, one could deduce this from Zariski's Theorem on Formal Functions 
(\cite{hartshorne}, III, Theorem 11.1 and Remark 11.1.2).  
But the proof given here is more elementary, and the above assertion will suffice for our purposes.
\end{remark}

Before we proceed, we introduce some notation that will be frequently used in
the sequel.

\begin{notation}\label{globalnotation}
Let $T$ be a complete discrete valuation ring with uniformizer $t$, and let $\hat X$ be a smooth connected
projective $T$-curve with closed fibre $X$ and function field $F$. 
Let $R_\varnothing$ denote the local ring of $\hat X$ at the generic point of $X$.
Given a subset $U$ of $X$, we
introduce the following objects:
\begin{itemize}
\item We set $R_U:=\{f \in R_\varnothing|\; \text{$f$ is regular on $U$}\}$, and
  we let $\hat{R}_U$ denote the $t$-adic completion of $R_U$.\label{not1}
\item If $U\neq X$, then $F_U$ denotes the fraction
  field of $\hat{R}_U$, and we set $\hat U:=\Spec\hat{R}_U$.\label{not2}
  If $U=X$, then $F_U:=F$.
\end{itemize}
In particular, $\hat R_\varnothing$ is the completion of the local ring of $\hat X$ at the generic point of the closed fibre $X$; this is a complete discrete valuation ring with uniformizer $t$, having as residue field the function field of $X$. 
Also, $F\leq F_U$ for all $U$, and $F_U\leq F_V$ if $V\subseteq U$.  (As we will see in Corollary~\ref{weierstrassvar} below, for {\it any} $U \subseteq X$, the field $F_U$ is the compositum of its subrings $F$ and $\hat R_U$.)

\end{notation}

The next result is an analog of Proposition~\ref{inverselimit} for subsets $U$ of the closed fibre $X$.  If $D$ is an effective divisor on $X$ that is supported on $U$, then we may regard a lift $\hat D$ of $D$ to $\hat X$ as a divisor on the scheme $\hat U$; and so as above we may consider $L(\hat U,\hat D)$, the rational functions on $\hat U$ with pole divisor at most $\hat D$.  We similarly write $L(U,D):=\{f \in k(X)|\; ((f)+D)|_U\geq 0\}$.

\begin{prop}\label{fingen} 
Let $T$ be a complete discrete valuation ring with uniformizer $t$, and let
$\hat X$ be a smooth connected projective $T$-curve with closed fibre $X$ of genus $g$. 
Let $U$ be a proper subset of $X$, let $D$ be an effective divisor on $U$, and  
let $\hat D$ be a lift of $D$ to $\hat X$.  Then 
\renewcommand{\theenumi}{\alph{enumi}}
\begin{enumerate}
\item $L(\hat U,\hat{D})$ is a finitely generated $\hat R_{U}$-module, and \label{fingen1}
\item if $D$ has degree greater than $2g-2$, the sequence 
$$0 \to tL(\hat U,\hat{D}) \to L(\hat U,\hat{D})\to L(U,D) \to 0$$
is exact.\label{fingen2}
\end{enumerate}
\end{prop}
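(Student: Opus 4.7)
The plan is to reduce to Proposition~\ref{inverselimit} by means of an auxiliary effective divisor $E$ on $X$ supported outside $U$, and then to pass from the projective $\hat X$ to the affine piece $\hat U$ via a $t$-adic Newton iteration. Since $U\subsetneq X$, I can choose such an $E$ (say a single closed point $Q\in X\setminus U$), and fix a lift $\hat E$ to $\hat X$. For $n$ large enough that $\deg(D+nE)>2g-2$, Proposition~\ref{inverselimit} gives that $M_n:=L(\hat X,\hat D+n\hat E)$ is a finitely generated free $T$-module with $M_n/tM_n\cong L(X,D+nE)$.

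Because $\hat E$ is supported off $\hat U$, every element of $M_n$ lies in $L(\hat U,\hat D)$, giving an $\hat R_U$-linear map
$$\varphi_n\colon M_n\otimes_T\hat R_U\;\longrightarrow\;L(\hat U,\hat D).$$
Reducing modulo $t$, the key claim is that for $n$ large enough, the induced map $L(X,D+nE)\otimes_k\O_X(U)\to L(U,D)$ is surjective. This follows from standard ampleness arguments on the curve $X$: for large $n$ the line bundle $\O_X(D+nE)$ is globally generated on $X$, and it restricts to $\O_X(D)|_U$ on $U$ (since $\operatorname{supp}(\hat E)\cap U=\varnothing$), so every section of $\O_X(D)$ on $U$ may be written as an $\O_X(U)$-combination of global sections of $\O_X(D+nE)$.

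Granted this mod-$t$ surjectivity, I obtain part~(\ref{fingen1}) by a $t$-adic Newton iteration. Given $f\in L(\hat U,\hat D)$, lift $f\bmod t$ to some $g_0\in M_n\otimes_T\hat R_U$; then $f-\varphi_n(g_0)$ lies in $tL(\hat U,\hat D)$ (using that $L(\hat U,\hat D)\subseteq F_U$ is $t$-torsion free and closed under division by $t$ when divisibility holds), so write it as $th_0$ and repeat the process with $h_0$ in place of $f$. The resulting series $\sum_{i\ge 0} t^i g_i$ converges $t$-adically in the complete $\hat R_U$-module $M_n\otimes_T\hat R_U$ to a preimage of $f$. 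Hence $\varphi_n$ is surjective, so $L(\hat U,\hat D)$ is finitely generated over $\hat R_U$. The same iteration identifies $L(\hat U,\hat D)/tL(\hat U,\hat D)$ with $L(U,D)$.

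For part~(\ref{fingen2}), when $\deg D>2g-2$ we may take $n=0$, so Proposition~\ref{inverselimit}(b) already gives $M_0/tM_0=L(X,D)$. Combined with the identification $L(\hat U,\hat D)/tL(\hat U,\hat D)=L(U,D)$ from the previous paragraph and the $t$-torsion freeness of $L(\hat U,\hat D)$, this yields the required exact sequence. The main obstacle is the mod-$t$ surjectivity established in the second paragraph: it is the one nontrivial geometric input, resting on global generation on the closed fibre and on the hypothesis $U\subsetneq X$ (which is what lets us produce the auxiliary $E$). Once that ingredient is in hand, the $t$-adic iteration and the exactness in (b) are essentially formal consequences of $t$-adic completeness and the flatness of $\hat R_U$ over $T$.
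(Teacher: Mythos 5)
Your proof is correct, but it takes a genuinely different route from the paper's, and I want to flag one step that you pass over too quickly.

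The paper proves part~(a) by induction on $\deg D$: writing $D=D'+P_1$, it uses strong approximation to produce a single rational function $f$ with a prescribed pole at $P_1$ and at most a pole at one point $Q\notin U$, and shows that $S'\cup\{f\}$ generates $L(\hat U,\hat D)$ once $S'$ generates $L(\hat U,\hat D')$. For part~(b) it lifts each $\bar b\in L(U,D)$ individually, applying Proposition~\ref{inverselimit}(\ref{inverselimit2}) to an auxiliary divisor $\sum n_i P_i'+D$ whose vertical part depends on the poles of $\bar b$ off $U$. Your proof instead fixes the auxiliary divisor $\hat E$ once and for all, forms the single $\hat R_U$-linear map $\varphi_n\colon M_n\otimes_T\hat R_U\to L(\hat U,\hat D)$, establishes surjectivity mod $t$, and then runs a $t$-adic successive approximation. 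This is a cleaner and more uniform argument: one surjection delivers both finite generation and the identification $L(\hat U,\hat D)/tL(\hat U,\hat D)\cong L(U,D)$ (which is exactly part~(b), noting that the kernel of reduction is $tL(\hat U,\hat D)$ because $\hat D$ is horizontal). As a by-product you even get explicit generators, and your argument for (b) does not actually require $\deg D>2g-2$ (that hypothesis only enters through your use of Proposition~\ref{inverselimit}(\ref{inverselimit2}) for $D+nE$ with $n$ large). For this reason the sentence ``when $\deg D>2g-2$ we may take $n=0$'' is a red herring — you already have what you need from the $n\gg0$ argument, and invoking $M_0/tM_0=L(X,D)$ contributes nothing.

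The one place where you owe the reader more is the ``standard ampleness argument'' giving mod-$t$ surjectivity, i.e.\ that $L(X,D+nE)\cdot \O(U)=L(U,D)$ for $n\gg 0$. Global generation of $\O_X(D+nE)$ gives surjectivity of the \emph{sheaf} map $L(X,D+nE)\otimes_k\O_X\to\O_X(D+nE)$, but $U$ here is an arbitrary proper subset of $X$, not necessarily open, so passing to sections over $U$ is not immediate. One correct route: since $U\subsetneq X$, the ring $\O(U)=\bigcap_{P\in U}\O_{X,P}$ is a Dedekind domain, both $L(X,D+nE)\cdot\O(U)$ and $L(U,D)$ are nonzero finitely generated fractional ideals, and global generation (which holds once $\deg(D+nE)\ge 2g$) guarantees that at every $P\in U$ some $s\in L(X,D+nE)$ has $\operatorname{ord}_P(s)=-\operatorname{ord}_P(D)$, so the two fractional ideals have identical local valuations and are therefore equal. (Alternatively: write $L(U,D)=\varinjlim_{V}\Gamma(V,\O_X(D))$ over affine opens $V\supseteq U$, use vanishing of $H^1$ of the kernel sheaf on each such $V$, and pass to the colimit.) Either way this is true, and once that lemma is in place the rest of your $t$-adic iteration is formally correct — but as written, ``standard ampleness arguments'' hides a real step, especially the passage from the sheaf-level surjection to sections over a possibly non-open $U$.
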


\begin{proof}
(\ref{fingen1}) We proceed by induction on $N = {\rm deg}(D)$.  The result holds for $N = 0$ since then $D=0$ and $L(\hat U,\hat{D}) = \hat R_{U}$.  So take $D$ of degree $N>0$ and assume that the result holds for smaller degrees.  Write $D = \sum_{i=1}^r a_i P_i$ for integers $a_i > 0$ and distinct closed points $P_i$ on $U$; and write $\hat D = \sum_{i=1}^r a_i \hat P_i$ for some lifts $\hat P_i$ of the points $P_i$.
Let $D' = D-P_1$ and $\hat D' = \hat D - \hat P_1$.  Then $D'$ is effective of degree less than $N$, and so $L(\hat U,\hat{D}')$ has a finite generating set $S'$ over $\hat R_U$.

Pick a closed point $Q \in X \smallsetminus U$ and a lift $\hat Q$ of $Q$ to $\hat X$.  
Let $X^\circ$ be the generic fibre of $\hat X$, 
and let $P_1^\circ$ and $Q^\circ$ be the generic points of $\hat P_1$ and $\hat Q$.  
The residue field $K_1$ of $\hat R_U$ at $P_1^\circ$ is a finite extension of the fraction field $K$ of $T$, 
and the local ring ${\cal O}_{\hat U,P_1^\circ}$ at $P_1^\circ$ is an equal characteristic discrete valuation ring with residue field $K_1$ (which can also be regarded as the constant subfield of ${\cal O}_{\hat U,P_1^\circ}$).

By the Strong Approximation Theorem (\cite{fieldarith}, Proposition~3.3.1), there is a rational function $f$ on $X^\circ$
(or equivalently, on $\hat X$) that has a pole of order $a_1$ at $P_1^\circ$ and is regular on 
$X^\circ  \smallsetminus \{P_1^\circ, Q^\circ\}$.  After multiplying $f$ by an appropriate power of $t$, we may assume that $f$ is a unit at the generic point of $X$.  
So $f \in L(\hat U, a_1\hat P_1) \smallsetminus L(\hat U, (a_1-1)\hat P_1)$.

For any $h \in L(\hat U, \hat D)$, there is an element
$\bar c \in K_1 \le {\cal O}_{\hat U,P_1^\circ}$ such that $h-\bar cf \in {\cal O}_{\hat U,P_1^\circ}$ has a pole at $P_1^\circ$ of order 
at most $a_1-1$ (since the local ring at $P_1^\circ$ is an equal characteristic discrete valuation ring with 
constant field $K_1$, and since $f$ has a pole of order $a_1$ at $P_1^\circ$).  
Now viewing $K_1$ as the residue field of ${\cal O}_{\hat U,P_1^\circ}$, lift $\bar c \in K_1$ to $c \in \hat R_U$.  Then $h - cf \in L(\hat U, \hat D')$, and hence it is an $\hat R_U$-linear combination of the elements of $S'$.  So $S := S' \cup \{f\}$ is a finite $\hat R_U$-generating set for $L(\hat U, \hat D)$.

(\ref{fingen2}) The kernel of $L(\hat U,\hat{D})\to L(U,D)$ is clearly $tL(\hat U,\hat{D})$.  To show surjectivity of
$L(\hat U,\hat{D})\to L(U,D)$, 
let $\bar b \in L(U,D)$ and consider $\bar{b}$ 
as a rational function on $X$.  Let $\{P_i'\;|\; i=1,\ldots,m\}$ be the set of poles of $\bar{b}$ that are not 
in $U$, of orders $n_i\in \N$. Then $\bar{b}\in L(X,\sum\limits_{i=1}^m n_iP_i'+D)$.  If $D$ has degree greater than $2g-2$, then so does $\sum\limits_{i=1}^m n_iP_i'+D$.
For each $i$ choose a lift $\hat P_i'$ of $P_i'$ to $\hat X$.
Proposition~\ref{inverselimit}(\ref{inverselimit2}) then gives a preimage $b$ of $\bar{b}$ in 
$L(\hat X,\sum\limits_{i=1}^m n_i\hat{P}_i'+\hat{D})\subseteq L(\hat{U},\hat{D})$, as desired.
\end{proof}

\begin{prop}\label{globalsum}
Let $T$ be a complete discrete valuation ring with uniformizer $t$. 
Let $\hat{X}$ be a smooth connected projective $T$-curve with function field $F$ and closed fibre $X$ of genus $g$.
Consider proper subsets $U_1,U_2\subset X$, with $U_0:=U_1\cap U_2$ empty. 
Let $\hat{R}_i:=\hat{R}_{U_i}$, $F_i:=F_{U_i}$.
Then there exists a finite $\hat{R}_1$-submodule $M_1$ of $\hat R_0 \cap F_1 \subseteq F_0$ with the following property:
For every $a\in \hat{R}_0$ there exist $b\in M_1$ and $c\in \hat{R}_2$ so that $a\cong b+c \; (\mod t\hat R_0)$.
More precisely, for any closed point $P \in U_1\subset X$, for any lift $\hat P$ of $P$ to $\hat X$, and for any non-negative integer 
$N > 2g-2$,
the module $M_1$ can be chosen as $L(\hat{U}_1,N\hat{P})$.
\end{prop}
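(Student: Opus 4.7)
The strategy is to reduce the problem modulo $t$ to a sum decomposition in the function field $k(X) = \hat R_0/t\hat R_0$, then lift the summands back.

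First I would check that $M_1 := L(\hat U_1, N\hat P)$ lies in $F_1 \cap \hat R_0 \subseteq F_0$ and is finitely generated over $\hat R_1$. Finite generation is Proposition~\ref{fingen}(\ref{fingen1}); the inclusion $M_1 \subseteq F_1$ is immediate, and $M_1 \subseteq \hat R_0$ holds because $\hat P$ is a horizontal divisor that does not pass through the generic point of $X$, so sections of $\O(N\hat P)$ are regular there. Next I would identify the images of $M_1$ and $\hat R_2$ under reduction modulo $t$. By Proposition~\ref{fingen}(\ref{fingen2}), applied to the divisor $NP$ on $X$ (of degree $N > 2g-2$), the image of $M_1$ in $k(X)$ is exactly $L(U_1, NP)$. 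By an analogous argument---given $\bar c \in k(X)$ regular on $U_2$ with pole divisor $E$ on $X \setminus U_2$, enlarge $E$ by an effective divisor on $X \setminus U_2$ so that the resulting degree exceeds $2g-2$, and lift via Proposition~\ref{inverselimit}(\ref{inverselimit2})---the image of $\hat R_2$ in $k(X)$ is precisely $\O(U_2) := \{f \in k(X) \mid f \text{ is regular on } U_2\}$.

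The problem thus reduces to the $k$-vector space sum decomposition
\[
k(X) \;=\; L(U_1, NP) + \O(U_2).
\]
Given $\bar a \in k(X)$ with pole divisor $A$, I would write $A = A_2 + A'$, where $A_2$ is supported on $U_2$ and $A'$ on its complement. Setting $D := NP + A_2$, both $D$ and $D - A_2$ have degree $> 2g-2$, so Riemann--Roch yields
\[
\dim L(X, D) - \dim L(X, D - A_2) \;=\; \deg A_2.
\]
The natural injection from $L(X, D)/L(X, D-A_2)$ into the space of principal parts at $\operatorname{supp}(A_2)$ bounded by $A_2$ is then an isomorphism by dimension. Hence I can choose $\bar b \in L(X, NP + A_2)$ whose principal parts at each $Q \in \operatorname{supp}(A_2) \subseteq U_2$ agree with those of $\bar a$. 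Since $\operatorname{supp}(A_2)$ is disjoint from $U_1$, the pole divisor of $\bar b$ meets $U_1$ only at $P$ with multiplicity at most $N$, so $\bar b \in L(U_1, NP)$; and $\bar c := \bar a - \bar b$ has no poles on $U_2$, so $\bar c \in \O(U_2)$.

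To finish: lift $\bar b$ to $b \in M_1$ using Proposition~\ref{fingen}(\ref{fingen2}), lift $\bar c$ to $c \in \hat R_2$ as in the computation of the image of $\hat R_2$ above, and obtain $a \equiv b + c \pmod{t\hat R_0}$. The one genuinely nontrivial step is the Riemann--Roch dimension count used to prescribe the principal parts of $\bar b$ at the poles of $\bar a$ lying in $U_2$; this is exactly where the hypothesis $N > 2g-2$ is used, since it guarantees that both $L(X, D)$ and $L(X, D - A_2)$ have vanishing $h^1$.
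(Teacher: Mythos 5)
Your proof is correct and follows the same overall strategy as the paper: reduce modulo $t$ to a decomposition of the rational function $\bar a$ on the closed fibre, invoke the hypothesis $N>2g-2$ via Riemann--Roch, and lift back. The one point where your implementation differs is the production of the mod-$t$ decomposition itself. The paper takes the repartition $\{f_Q\}_{Q\in X}$ with $f_Q=\bar a$ for $Q\in U_1$ and $f_Q=0$ otherwise, and then cites Serre's adelic result (\cite{serreagcf}, Chapter~II, Proposition~3) together with $H^1(X,\O_X(NP))=0$ to produce $\bar c$, from which $\bar b:=\bar a-\bar c$. You instead construct $\bar b$ directly, showing by a Riemann--Roch dimension count that $L(X,NP+A_2)/L(X,NP)$ maps isomorphically onto the space of $A_2$-bounded principal parts at the poles of $\bar a$ in $U_2$; this is really an explicit special case of Serre's proposition, so the mathematics is the same, but your version avoids the adelic formalism. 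Two small remarks: first, you should note that $U_1\cap U_2=\varnothing$ is what makes the conclusion $\bar b\in L(U_1,NP)$ work (the support of $A_2$ is disjoint from $U_1$), which you do use implicitly; second, your explicit justification that regular functions on $U_2$ lift to $\hat R_2$ (by enlarging the pole divisor off $U_2$ and applying Proposition~\ref{inverselimit}(\ref{inverselimit2})) fills in a step that the paper passes over without comment, and is a welcome clarification.
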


\begin{proof}
Let $P$ and $\hat P$ be as above, and let $M_1 = L(\hat{U}_1,N\hat{P})$ for some non-negative integer $N > 2g-2$.  Thus $M_1 \subseteq \hat R_0 \cap F_1$.
By Proposition~\ref{fingen}(\ref{fingen1}), the $\hat{R}_1$-module $M_1$ is finitely generated.

Given $a\in \hat{R}_0$, its mod $t$ reduction $\bar a \in \hat{R}_0/t\hat{R}_0$ may be viewed as a rational
function on $X$.  Consider the family of rational functions $\{f_Q\}_{Q \in X}$ on $X$ given by $f_Q = \bar a$ for $Q \in U_1$ and $f_Q = 0$ for $Q \notin U_1$.  Since $N > 2g-2$, the Riemann-Roch Theorem (\cite{serreagcf}, Chapter II, Theorem~3)
implies that $H^1(X,{\cal O}_X(NP))=0$.  Hence by \cite{serreagcf}, Chapter II, Proposition~3, there is a rational function $\bar c$ on $X$ such that  
$f_Q-\bar c$ is regular at $Q$ for all $Q \ne P$, and such that $f_P - \bar c$ has a pole at $P$ of order at most $N$.  In particular, $\bar c$ is regular on $U_2$.  Thus $\bar c \in \hat R_2/t\hat R_2$, and $\bar c$ is the reduction of some $c \in \hat R_2$.  By the definition of $\bar c$, the rational function $\bar b := \bar a - \bar c$ on $X$ is regular on $U_1$ except possibly at $P$, where it has a pole of order at most $N$; i.e., $\bar b \in L(U_1,NP)$. 
Proposition~\ref{fingen}(\ref{fingen2}) implies that $\bar b$ is the image of an element $b \in L(\hat{U}_1,N\hat{P}) = M_1$, since  $N > 2g-2$; and then $a\cong b+c \; (\mod t\hat R_0)$.
\end{proof}

The main result of this section is a factorization result for use in patching.  As above, we use Notation~\ref{globalnotation}.

\begin{thm}\label{globalfactorization}
Let $T$ be a complete discrete valuation ring, and let  
$\hat X$ be a smooth connected projective $T$-curve with  closed fibre $X$. 
Let $U_1$, $U_2$ be subsets of $X$ and assume that $U_0:=U_1 \cap U_2$ is empty.
Let $F_i:=F_{U_i}$  and $\hat R_i = \hat R_{U_i}$ ($i=0,1,2$). 
Then for every matrix $A\in \Gl_n(F_0)$ there exist
matrices $A_1\in \Gl_n(F_1)$ and $A_2 \in \GL_n(F_2)$ such that $A=A_1A_2$.
\end{thm}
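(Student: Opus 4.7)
The plan is to invoke the two preliminary factorization results of Section~\ref{preliminarysection} and feed them the sum decomposition already established in Proposition~\ref{globalsum}. The degenerate cases where $U_1 = X$ (forcing $U_2 = \varnothing$ and hence $F_2 = F_0$) or vice versa are trivial via $A = A \cdot I$ or $A = I \cdot A$; so assume both $U_1,U_2$ are proper subsets of $X$. Because $U_0 = \varnothing$, the ring $\hat R_0 = \hat R_\varnothing$ is the completion of the local ring of $\hat X$ at the generic point of $X$, i.e.\ a complete discrete valuation ring with uniformizer $t$, and $F_0$ is its fraction field.

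First I would reduce to matrices with entries in $\hat R_0$: given $A \in \GL_n(F_0)$, some power $t^N A$ lies in $\Mat_n(\hat R_0)$ and has non-zero determinant, and a factorization $t^N A = A_1' A_2'$ with $A_1' \in \GL_n(F_1)$, $A_2' \in \GL_n(F_2)$ yields $A = (t^{-N}A_1')A_2'$ as desired (since $t \in F_1$). Next I would verify the hypothesis of Lemma~\ref{conditionalfactorization}(\ref{factoring}): the reduction $\hat R_1/t\hat R_1$ is the ring of functions regular on the subset $U_1 \subset X$, hence a domain whose fraction field is the function field $k(X)$ of $X$; and $\hat R_0/t\hat R_0$ is the residue field at the generic point of $X$, which is precisely $k(X)$. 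So Lemma~\ref{conditionalfactorization}(\ref{factoring}) reduces the problem to factoring matrices $A \in \GL_n(\hat R_0)$ with $A \equiv I\;(\mod t\Mat_n(\hat R_0))$ as $A = A_1 A_2$ with $A_1 \in \GL_n(F_1)$ and $A_2 \in \GL_n(\hat R_2)$.

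To handle that case I would apply Proposition~\ref{factorizationcloseto1}, which requires a $t$-adically complete $\hat R_1$-submodule $M_1 \subseteq \hat R_0 \cap F_1$ providing a mod-$t$ sum decomposition of $\hat R_0$. Choosing any closed point $P \in U_1$, a lift $\hat P$ to $\hat X$, and an integer $N > 2g-2$, Proposition~\ref{globalsum} tells us that $M_1 := L(\hat U_1, N\hat P)$ does exactly this, and Proposition~\ref{fingen}(\ref{fingen1}) ensures $M_1$ is finitely generated over $\hat R_1$ (hence $t$-adically complete). Proposition~\ref{factorizationcloseto1} then produces $A = A_1 A_2$ with $A_2 \in \GL_n(\hat R_2) \subseteq \GL_n(F_2)$ and $A_1 \in \GL_n(F_1)$, completing the proof.

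The main conceptual work has already been done in Proposition~\ref{globalsum} (which in turn rested on Proposition~\ref{fingen} and Riemann--Roch); the remaining obstacle in this proof is essentially bookkeeping, namely checking that the hypotheses of Lemma~\ref{conditionalfactorization} and Proposition~\ref{factorizationcloseto1} match the geometric setup (in particular, identifying $\hat R_0/t\hat R_0$ with $k(X)$ and confirming $L(\hat U_1,N\hat P) \subseteq \hat R_0 \cap F_1$, the latter because elements of this linear series are regular at the generic point of $X$).
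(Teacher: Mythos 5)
Your proof is correct and follows essentially the same route as the paper: reduce to matrices in $\Mat_n(\hat R_0)$ with non-zero determinant by extracting a power of $t$, invoke Lemma~\ref{conditionalfactorization}(\ref{factoring}) (after checking that $\hat R_1/t\hat R_1$ is a domain with fraction field $\hat R_0/t\hat R_0 = k(X)$) to reduce further to matrices congruent to $I$, and then apply Proposition~\ref{factorizationcloseto1} with $M_1 = L(\hat U_1, N\hat P)$ supplied by Proposition~\ref{globalsum}. The only difference from the paper is presentational: you unwind the reductions top-down (general $A$, then $\Mat_n(\hat R_0)$, then $A \equiv I$), whereas the paper builds them bottom-up, but the chain of lemmas invoked is identical.
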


\begin{proof} 
We may assume that $U_1, U_2$ are proper subsets of $X$; otherwise the assertion is trivial.  As observed at Notation~\ref{globalnotation},
$\hat R_0 = \hat R_\varnothing$ is a complete discrete valuation ring whose residue field $\hat R_0/t\hat R_0$ is the function field of $X$ (which is also the fraction field of $\hat R_1/t\hat R_1$).  Moreover the uniformizer $t$ of $T$ is also a uniformizer for $\hat R_0$.
By Proposition~\ref{globalsum}, there exists a finite $\hat{R}_1$-module $M_1 \subset \hat R_0 \cap F_1$
satisfying the hypothesis of Proposition~\ref{factorizationcloseto1}.  So by Proposition~\ref{factorizationcloseto1}, for every $A \in \GL_n(\hat R_0)$ that is congruent to the identity modulo $t$, there exist $A_1 \in \GL_n(F_1)$ and $A_2 \in \GL_n(\hat R_2)$ such that $A = A_1A_2$.  By Lemma~\ref{conditionalfactorization}, the same conclusion then holds for {\it any} matrix $A \in \Mat_n(\hat R_0)$ having non-zero determinant.  Finally, for any $A\in \Gl_n(F_0)$, there is an $r \ge 0$ such that $t^rA \in \Mat(\hat R_0)$  with non-zero determinant.  Since $t^rI \in \Gl_n(F_1)$, the conclusion again follows.
\end{proof}

The above proof actually shows a stronger result: Namely, every matrix $A\in \Gl_n(F_0)$ may be factored as $A=A_1A_2$, for some matrices $A_1\in \Gl_n(F_1)$ and $A_2 \in \GL_n(\hat R_2)$.

A generalization of Theorem~\ref{globalfactorization} in which $U_1 \cap U_2$ can be non-empty appears in Theorem~\ref{globalfactgeneral} below.

\subsection{Intersection}

We continue to use Notation~\ref{globalnotation}.  

\begin{prop}[Weierstrass Preparation]\label{weierstrass}
Let $T$ be a complete discrete valuation ring and let $\hat{X}$ be a smooth connected projective $T$-curve with function field $F$ and 
closed fibre $X$.  Suppose that $U\subseteq X$.
Then every element $f\in \hat R_U$ may be written as $f=bu$ with $b\in F$ and $u\in
\hat R_U^\times$.
\end{prop}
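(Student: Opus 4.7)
My plan is to construct $b\in F$ and $u\in\hat R_U^\times$ iteratively, modulo higher and higher powers of $t$, arranging that every correction to $b$ lies in a single finitely generated $T$-submodule of $F$ so that the $t$-adic limit stays inside $F$. The trivial cases are $f=0$ (take $b=0, u=1$) and $U=X$ (where $\hat R_X$ is a $t$-adically complete finite $T$-algebra inside $F$, so one can take $b=f, u=1$). Otherwise, using that $\hat R_U\hookrightarrow\hat R_\varnothing$ (a complete DVR with uniformizer $t$) is $t$-adically separated, I would factor out the maximal $t$-power $t^r$ dividing $f$; since $t^r\in T\subseteq F$, this reduces to the case $\bar f\ne 0$ in $R_U/tR_U\subseteq k(X)$.

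Next I would set up the iteration. Let $D_f$ be the zero divisor of $\bar f$ on $U$, and let $P_f$ be its pole divisor on $X$ (necessarily supported in $X\setminus U$). Pick a closed point $Q\in X\setminus U$, set $D=P_f+NQ$ with $N$ so large that $\deg(D-D_f)>2g-2$, and lift $D$ to a divisor $\hat D$ on $\hat X$ disjoint from $\hat U$. By Proposition~\ref{inverselimit}, the $T$-module $M:=L(\hat X,\hat D)\subset F$ is finitely generated (hence $t$-adically complete), and reduction modulo $t$ gives a surjection $M\twoheadrightarrow L(X,D)$. Twisting the short exact sequence $0\to\O_X(-D_f)\to\O_X\to\O_{D_f}\to 0$ by $\O_X(D)$ and using the Riemann-Roch vanishing $H^1(X,\O_X(D-D_f))=0$ produces a second surjection $L(X,D)\twoheadrightarrow\O_{D_f}$; this is the key technical tool that will make the iteration close up.

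Then I would build $b_n\in M$ and $u_n\in\hat R_U$ by induction, preserving $\bar b_n=\bar f$, $\bar u_n=1$, and $f\equiv b_n u_n\pmod{t^{n+1}\hat R_U}$. Start with $b_0\in M$ any lift of $\bar f\in L(X,D)$ and $u_0=1$. For the step $n\mapsto n+1$, write $f-b_n u_n=t^{n+1}h_n$; setting $b_{n+1}=b_n+t^{n+1}B$ and $u_{n+1}=u_n+t^{n+1}C$ and reducing modulo $t^{n+2}$ reduces the problem to solving $\bar h_n=\bar B+\bar f\bar C$ in $R_U/tR_U$ with $\bar B\in L(X,D)$. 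I pick $\bar B\in L(X,D)$ whose image in $\O_{D_f}$ matches that of $\bar h_n$ (possible by the surjection above); then $\bar h_n-\bar B$ vanishes on $D_f$, and since the local rings of $X$ at closed points are DVRs, $\bar C:=(\bar h_n-\bar B)/\bar f$ defines an element of $R_U/tR_U$. Lifting $\bar B$ to $B\in M$ via Proposition~\ref{inverselimit}(b) and $\bar C$ to $C\in R_U$ completes the inductive step. Passing to the $t$-adic limits gives $b:=\lim b_n\in M\subset F$ (using $t$-adic completeness of the finitely generated $T$-module $M$) and $u:=\lim u_n\in 1+t\hat R_U\subseteq\hat R_U^\times$, with $f=bu$. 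The hardest part, I expect, will be arranging that every correction $\bar B$ can be found in a single finite-dimensional space, which is exactly what forces the careful choice of $D$ — large enough both to accommodate $P_f$ and to make $L(X,D)\twoheadrightarrow\O_{D_f}$ surjective.
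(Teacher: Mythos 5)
Your proposal is correct, and takes a genuinely different route from the paper's. The paper first lifts $\bar f$ to an actual rational function $\hat f \in F$ via Proposition~\ref{inverselimit}, reduces to factoring $\tilde f := f/\hat f \equiv 1 \; (\mod t)$, and then invokes the already-built factorization machinery (Proposition~\ref{factorizationcloseto1} fed by the additive decomposition of Proposition~\ref{globalsum}) to write $\tilde f = f_1 f_2$ with $f_1 \in L(\hat U_1,N\hat P)$ and $f_2 \in \hat R_U^\times$; a final application of the intersection Lemma~\ref{generalintersect} is then needed to pin $\hat f f_1$ into $F$. You instead construct $b$ and $u$ by a direct iteration in which every correction to $b$ is confined to the single finitely generated, $t$-adically complete $T$-module $M = L(\hat X,\hat D) \subset F$ (with $D$ chosen both to absorb the poles of $\bar f$ and to make $L(X,D) \twoheadrightarrow \O_{D_f}$ surjective), so that $b = \lim b_n$ lands in $F$ automatically and the intersection lemma is bypassed entirely. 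Both proofs rest on the same $H^1$-vanishing from Riemann--Roch, but you package it as the twisted sequence $0\to\O_X(D-D_f)\to\O_X(D)\to\O_{D_f}\to 0$ rather than as the principal-parts argument inside Proposition~\ref{globalsum}. The paper's proof is economical given that the factorization and intersection lemmas are already needed for the main patching theorems; yours is more self-contained and could be read independently of the rest of Section~\ref{globalsection}. One point worth making explicit in a polished write-up: the convergence $b_n\to b$ in the intrinsic $t$-adic topology of $M$ also gives convergence in $\hat R_U$, since $b-b_n\in t^{n+1}M\subseteq t^{n+1}\hat R_U$; this is what lets the identity $f\equiv b_n u_n \; (\mod t^{n+1}\hat R_U)$ pass to the limit $f=bu$.
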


\begin{proof}
If $U = X$ then $\hat R_U = T \subset F$, and the result is immediate.
If $U = \varnothing$, then $\hat R_U$ is a discrete valuation ring, and the uniformizer $t$ of $T$ is also a uniformizer of $\hat R_U$.  In this case the result also follows easily, by taking $b$ to be a power of $t$.  So from now on we assume that $U \ne X, \varnothing$.

Let $U_1:=X\smallsetminus U$.  Thus $U_1\cap U = \varnothing$ and $\hat R_\varnothing/t\hat R_\varnothing$ is the function field of $X$.  Let $f \in \hat R_U$; we may assume $f \ne 0$ since otherwise the result is trivial.  Since $t \in F$, after factoring out a power of $t$ from $f$, we may assume that $f \notin t\hat R_U$.  Let $\bar f \in \hat R_U/t\hat R_U \subset \hat R_\varnothing/t\hat R_\varnothing$ be the reduction of $f$ modulo $t$.  Here $\bar f$ is a non-zero rational function on $X$ whose pole divisor $D$ is supported on $U_1$.  If $D=0$, then $\bar f$ is a nonzero constant function on $X$, and hence is a unit in $\hat R_U/t\hat R_U$, the ring of functions on $U$.  In this case $f$ is a unit in the $t$-adically complete ring $\hat R_U$, and we may take $u=f$, $b=1$. 

So now assume instead that $D$ is a nonzero effective divisor, hence of degree at least $1$.
Let $\hat D$ be a lift of $D$ to $\hat X$, and pick a positive integer $N > 2g-2$, where $g$ is the genus of $X$.  Thus $\bar f \in L(X,D) \subseteq L(X,ND)$.
By Proposition~\ref{inverselimit}(\ref{inverselimit2}), there exists some $\hat f \in L(\hat X,N\hat D) \subset R_U \subseteq F \cap \hat R_U$ whose reduction mod $t\hat R_U$ is $\bar f$; thus $f \equiv \hat f \; (\mod t\hat R_U)$.  Here $\hat f \not\in t\hat R_U$ because $\bar f \ne 0$; so $\hat f$ is invertible in the $t$-adically complete ring $\hat R_\varnothing$.
Let $\tilde f=f/\hat f \in \hat R_\varnothing$.  Hence $\tilde f \equiv 1 \; (\mod t\hat R_\varnothing)$.  Let $P$ be a point of $U_1$ and let $\hat P$ be a lift of $P$ to $\hat X$.  By Proposition~\ref{globalsum} (with $U_2=U$ and $U_0 = \varnothing$), Proposition~\ref{factorizationcloseto1} allows us to write $\tilde f=f_1f_2$ with $f_1\in L(\hat U_1,N\hat P)$ and $f_2\in \hat{R}_U^\times$. 
So $\hat f f_1 = ff_2^{-1}\in L\cap \hat{R}_U$, where $L := L(\hat U_1,N\hat D + N\hat P)$, using that $\hat f \in L(\hat X,N\hat D) \subseteq L(\hat U_1,N\hat D)$. 
By Proposition~\ref{fingen}(\ref{fingen2}), $L/tL = L(U_1,ND + NP)$. 
Thus $L/tL\cap \hat{R}_U/t\hat{R}_U= L(X,ND+NP) = L(\hat X,N\hat D + N\hat P)/tL(\hat X,N\hat D +N\hat P)$ by Proposition~\ref{inverselimit}(\ref{inverselimit2}).  Applying Lemma~\ref{generalintersect} 
to the four $T$-modules $L(\hat X,N\hat D+N\hat P) \subseteq L, \hat R_U \subseteq \hat R_\varnothing$ yields that $\hat f f_1\in L\cap \hat{R}_U = L(\hat X,N\hat D+N\hat P) \subset F$.  Hence we may take $b=\hat f f_1 \in F$ and $u=f_2 \in \hat R_U^\times$.
\end{proof}

Note that if $\hat X = \P^1_T$ and $U$ consists of a single point, then this assertion is related to the classical form of the Weierstrass preparation theorem (e.g.\ see \cite{grifharris}, p.8).

\begin{cor}\label{weierstrassvar}
With notation as in Proposition~\ref{weierstrass}, every element $f$ in the fraction field of $\hat R_U$ may be written as $f=bu$ with $b\in F$ and
$u\in \hat{R}_U^\times$.  Hence $F_U$ is the compositum of $\hat R_U$ and $F$.
\end{cor}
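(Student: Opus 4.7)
The plan is to derive this directly from Proposition~\ref{weierstrass}, extending the decomposition from $\hat R_U$ to its fraction field by a simple ratio argument. First I would handle the trivial cases $U=X$ (where $\hat R_U = T$ and $F_U = F$, so any $b = f$, $u = 1$ works) and $U = \varnothing$ (where $\hat R_U$ is a discrete valuation ring whose uniformizer $t$ lies in $F$, so after extracting a power of $t$ the remaining factor is a unit). These cases are genuinely easy and merely allow the reduction to the main case $U \ne X, \varnothing$.

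For the main case, given $f \in F_U = \operatorname{Frac}(\hat R_U)$, I would write $f = p/q$ with $p, q \in \hat R_U$ and $q \ne 0$. Applying Proposition~\ref{weierstrass} to both $p$ and $q$ gives decompositions $p = b_1 u_1$ and $q = b_2 u_2$ with $b_1, b_2 \in F$ (with $b_2 \ne 0$, since $u_2 \in \hat R_U^\times$ and $q \ne 0$) and $u_1, u_2 \in \hat R_U^\times$. Setting $b := b_1/b_2 \in F$ and $u := u_1/u_2 \in \hat R_U^\times$ then yields the desired factorization $f = bu$.

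For the second assertion, both $F$ and $\hat R_U$ are contained in $F_U$ (the former by the inclusion noted in Notation~\ref{globalnotation}, the latter by definition as its fraction field), so the compositum $F \cdot \hat R_U$ lies inside $F_U$. Conversely, the factorization just established shows that every element of $F_U$ is a product of an element of $F$ with a unit of $\hat R_U$, hence lies in $F \cdot \hat R_U$. This gives equality.

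There is essentially no obstacle here: the substantive content has already been carried out in Proposition~\ref{weierstrass}, and the corollary only requires passing from an element of $\hat R_U$ to a quotient of two such elements, which is immediate since $F$ is a field and $\hat R_U^\times$ is closed under taking inverses.
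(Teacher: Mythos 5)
Your argument is correct and essentially the same as the paper's, which simply notes that the first assertion is immediate from Proposition~\ref{weierstrass} (your numerator/denominator ratio argument is the obvious way to see this) and that the second follows from the definition of $F_U$. The only tiny imprecision is in your second paragraph on the compositum: when $U=X$, $F_U=F$ is by definition larger than the fraction field $K$ of $\hat R_U=T$, so the first assertion of the corollary speaks about $K$ rather than $F_U$; but the containment $F_U = F \subseteq F\cdot T$ is trivial there, so your conclusion still holds.
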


Here the first assertion is immediate from the above proposition, and the second assertion then follows from the definition of $F_U$ in Notation~\ref{globalnotation}, using $\hat R_X = T$.

We are now in a position to prove the intersection result needed for patching.

\begin{thm}\label{globalintersection}
Let $T$ be a complete discrete valuation ring, let $\hat X$ be a 
smooth connected projective $T$-curve with closed fibre $X$.
Let $U_1$, $U_2$ be subsets of $X$, and write $U=U_1 \cup U_2$,    
$U_0 = U_1 \cap U_2$. 
Then $F_{U_1} \cap F_{U_2} = F_U$ inside $F_{U_0}$.
\end{thm}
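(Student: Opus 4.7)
The inclusion $F_U\subseteq F_{U_1}\cap F_{U_2}$ is immediate, since $U_i\subseteq U$ gives $R_U\subseteq R_{U_i}$, hence $\hat R_U\subseteq\hat R_{U_i}$ and $F_U\subseteq F_{U_i}$. For the reverse inclusion, my plan is to first establish the ring-level identity $\hat R_{U_1}\cap\hat R_{U_2}=\hat R_U$ inside $\hat R_{U_0}$, and then upgrade it to fraction fields using the Weierstrass preparation in Corollary~\ref{weierstrassvar}.

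I will obtain the ring intersection by applying Lemma~\ref{generalintersect} with $M=\hat R_U$, $M_i=\hat R_{U_i}$, $M_0=\hat R_{U_0}$. The crucial hypothesis is the mod-$t$ intersection $M_1/tM_1\cap M_2/tM_2=M/tM$: each quotient $\hat R_{U_i}/t\hat R_{U_i}=R_{U_i}/tR_{U_i}$ embeds in the function field $\hat R_\varnothing/t\hat R_\varnothing$ of $X$ as the ring of regular functions on $U_i$, so a function regular on both $U_1$ and $U_2$ is precisely one regular on $U=U_1\cup U_2$. The compatibility conditions $M\cap tM_i=tM$ and $M_i\cap tM_0=tM_i$ reduce to injectivity of the relevant mod-$t$ inclusions, which follows from the geometric observation that a horizontal pole of an element of $R_\varnothing^\times$ cannot be cancelled by multiplication by the vertical element $t$.

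To pass to the fraction fields, take $f\in F_{U_1}\cap F_{U_2}$ with $f\neq 0$. After multiplying by a power of $t\in F^\times\subseteq F_U^\times$, I may assume the $t$-adic valuation $v(f)=0$, so that the reduction $\bar f$ in $\hat R_\varnothing/t\hat R_\varnothing$ is nonzero. By Corollary~\ref{weierstrassvar}, write $f=b_iu_i$ in each $F_{U_i}$, with $b_i\in F^\times$ and $u_i\in\hat R_{U_i}^\times$; matching valuations forces $b_i\in R_\varnothing^\times$. The key move is to lift $\bar f$ to an element $b\in R_\varnothing^\times\subseteq F^\times$ via the residue surjection $R_\varnothing\twoheadrightarrow R_\varnothing/tR_\varnothing$. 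Then $b_i/b\in R_\varnothing^\times$ has reduction $\bar b_i/\bar f=1/\bar u_i$, which is regular and nonvanishing on $U_i$; and since horizontal zeros and poles of units of $R_\varnothing$ are visible on the mod-$t$ reduction, this forces $b_i/b\in R_{U_i}^\times\subseteq\hat R_{U_i}^\times$. Hence $f/b=(b_i/b)u_i$ is a unit of $\hat R_{U_i}$ for both $i=1,2$, so by the ring intersection $f/b\in\hat R_U^\times$, yielding $f=b\cdot(f/b)\in F\cdot\hat R_U^\times\subseteq F_U$.

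The delicate point is arranging a single $b\in F^\times$ that works for both indices simultaneously; the key insight is that any lift of $\bar f$ automatically makes $\overline{b_i/b}=1/\bar u_i$ a unit on $U_i$, because the Weierstrass identities $\bar f=\bar b_i\bar u_i$ provide exactly the right normalization on both sides at once.
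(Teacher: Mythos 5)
The first half of your argument --- establishing $\hat R_{U_1}\cap\hat R_{U_2}=\hat R_U$ from Lemma~\ref{generalintersect} via the mod-$t$ picture --- is sound and is also how the paper proceeds. The gap is in the upgrade to fraction fields: the assertion that \emph{any} lift $b\in R_\varnothing^\times$ of $\bar f$ satisfies $b_i/b\in R_{U_i}^\times$ is false, because it rests on the claim that ``horizontal zeros and poles of units of $R_\varnothing$ are visible on the mod-$t$ reduction.'' They are not. Take $\hat X=\P^1_{k[[t]]}$, $U_1=\A^1_k$, $U_2=\P^1_k\smallsetminus\{0\}$, $f=(x+t)/x\in F_{U_1}\cap F_{U_2}$, so $\bar f=1$, and for $i=1$ take $b_1=(x+t)/x$, $u_1=1$. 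If you lift $\bar f=1$ to, say, $b=1+x^2t\in R_\varnothing^\times$, then $b_1/b=(x+t)/\bigl(x(1+x^2t)\bigr)$ has mod-$t$ reduction $1$ (a unit on $U_1$) yet has a genuine pole at the closed point $(x,t)=(0,0)\in U_1$ --- indeed its $t$-adic expansion has $t$-coefficient $1/x-x^2\notin k[x]$, so $b_1/b\notin\hat R_{U_1}$. In general an element of $R_\varnothing^\times$ can have a pole along a horizontal divisor meeting $U_i$ while its mod-$t$ reduction is regular and nonvanishing (the pole is ``absorbed'' into the divisor $x+t=0$ in the example), so the normalization you hope the Weierstrass identities $\bar f=\bar b_i\bar u_i$ provide is not actually there; $\bar b_i$ has no controlled relationship to $\bar f$.

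The paper avoids the need to find a single good $b$ altogether. Instead of normalizing mod $t$, it writes $f_i=a_i/b_i$ with $a_i,b_i\in\hat R_U$, clears denominators in $f=(a_1/b_1)u_1=(a_2/b_2)u_2$ to get $a_1b_2u_1=a_2b_1u_2$, observes that the left side lies in $\hat R_{U_1}$ and the right side in $\hat R_{U_2}$, and then applies the ring intersection you already proved. This yields $a_1b_2u_1\in\hat R_U$, whence $f=a_1b_2u_1/(b_1b_2)$ is a ratio of elements of $\hat R_U$, i.e.\ $f\in F_U$. The case $U=X$ (where $\hat R_U=T$ and $F_U=F$ is not the fraction field of $\hat R_U$) requires a separate argument in the paper, replacing $\hat R_U$ by the ring of functions on $\hat X\smallsetminus\hat P$; your write-up does not address this case either, though it would have become moot had the normalization step worked.
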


\begin{proof} Let $\hat{R}_i:=\hat{R}_{U_i}$ and $F_i := F_{U_i}$, and let $t$ be a uniformizer of $T$.
We need only show that $F_1\cap F_2\subseteq F_U$, the reverse inclusion being trivial. Take an element $f \in F_1 \cap F_2$.  By Corollary~\ref{weierstrassvar}, $f=f_1u_1=f_2u_2$ 
with $f_i \in F \leq F_U$ and $u_i\in \hat R_i^\times$.  We wish to show that $f \in F$.

First, assume that $U\neq X$.  Thus $F_U$ is the fraction field of $\hat R_U$.
Write $f_i=a_i/b_i$ with
$a_i,b_i\in\hat R_U$.  So $f=a_1u_1/b_1=a_2u_2/b_2$.  Hence $a_1b_2u_1=a_2b_1u_2$, where
the left side is in $\hat R_1$ and the right side is in $\hat R_2$.  
Since $\hat R_1/t\hat R_1 \cap \hat R_2/t\hat R_2 = \hat R_U/t\hat R_U$, the
hypotheses of Lemma~\ref{generalintersect} are seen to hold in this situation (with $M_i := \hat R_i$, $M := \hat R_U$); so 
$\hat R_1
\cap \hat R_2 = \hat R_U$  and
$a_1b_2u_1\in \hat R_U$.  
But then 
$f=a_1u_1/b_1=a_1b_2u_1/b_1b_2$, where the numerator and denominator are both in
$\hat R_U$; i.e., $f\in F_U$.

Next suppose that $U=X$, so that $F_U = F$.  We may assume that $U_1, U_2$ are proper subsets of $X$, since otherwise the assertion is trivial.  So $U_2$ is not contained in $U_1$.  Pick a closed point $P \in U_2 \smallsetminus U_1$ and a lift $\hat P \in \hat U_2 \subset \hat X$.
By Propositions~\ref{inverselimit}(\ref{inverselimit2}) and \ref{fingen}(\ref{fingen2}), the mod $t$ reduction maps
$L(\hat{X},N\hat{P})\rightarrow L(X,NP)$ and
$L(\hat{U}_2,N\hat{P})\rightarrow L(U_2,NP)$ are surjective for $N$ sufficiently large. 
Then, by Lemma~\ref{generalintersect}, $\hat R_1 \cap L(\hat U_2, N\hat P) = L(\hat X, N\hat P)$ for $N \gg 0$, 
using in particular that the same statement is true modulo $t$. 
Let $R' = \bigcup_{N=0}^\infty L(\hat X, N\hat P)$, the ring of regular functions on 
$\hat{X}\smallsetminus \hat{P}$; and let 
$\hat R_2' = \bigcup_{N=0}^\infty L(\hat U_2, N\hat P)$, 
the ring of regular functions on 
$\hat U_2 \smallsetminus \hat{P}$.  The above intersection for $N \gg 0$ implies that 
$\hat{R}_1\cap \hat R_2'=R'$, and in particular that 
$R' \subset \hat R_1$.  Also, 
$\hat R_2 \subset \hat R_2'$, and 
$F$ is the fraction field of $R'$.  Proceeding as in the previous paragraph but with $\hat R_U$ and $\hat R_2$ respectively replaced by $R'$ and $\hat R_2'$, we may write $f_i=a_i/b_i$ with $a_i,b_i\in R'$.  Thus $a_1b_2u_1=a_2b_1u_2 \in \hat R_1 \cap \hat R_2' = R'$
and so $f = a_1b_2u_1/b_1b_2 \in F$.
\end{proof}

Using Theorem~\ref{globalintersection}, we next obtain a strengthening of the factorization result Theorem~\ref{globalfactorization} that applies to more general pairs $U_1,U_2$.  This result, which may be regarded as a form of Cartan's Lemma (\cite{cartan}, Section~4, Th\'eor\`eme~I), also generalizes Corollary 4.5 of \cite{haranjarden} (which dealt just with the case that the $U_i$ are Zariski open subsets of the line in order to make use of unique factorization of the corresponding rings).

\begin{thm} \label{globalfactgeneral}
Let $T$ be a complete discrete valuation ring, let  
$\hat X$ be a smooth connected projective $T$-curve with closed fibre $X$. 
Let $U_1, U_2 \subseteq X$, let $U_0=U_1 \cap U_2$, and 
let $F_i:=F_{U_i}$ ($i=0,1,2$) under Notation~\ref{globalnotation}. 
Then for every matrix $A\in \Gl_n(F_0)$ there exist
matrices $A_i\in \Gl_n(F_i)$ such that $A=A_1A_2$.
\end{thm}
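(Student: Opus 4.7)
\medskip

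\noindent\textbf{Proof plan.}
The plan is to reduce to Theorem~\ref{globalfactorization} (the empty-intersection case) by choosing an auxiliary pair of subsets of $X$ with empty intersection, and then using Theorem~\ref{globalintersection} to control where the resulting factors live.  Concretely, I would set $V_1 = U_1$ and $V_2 = X \smallsetminus U_1$, so that $V_1 \cap V_2 = \varnothing$, and apply the established factorization result to this new pair.

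First, observe that since $U_0 \subseteq U_1$, we have $F_1 \leq F_0$ and in fact $F_0 \leq F_\varnothing$, so the given matrix $A \in \Gl_n(F_0)$ can be viewed as an element of $\Gl_n(F_\varnothing)$.  By Theorem~\ref{globalfactorization} applied to $V_1, V_2$, we can factor $A = A_1 A_2'$ with $A_1 \in \Gl_n(F_{V_1}) = \Gl_n(F_1)$ and $A_2' \in \Gl_n(F_{X\smallsetminus U_1})$.  Since $A_1^{-1} \in \Gl_n(F_1) \subseteq \Gl_n(F_0)$ (using $U_0 \subseteq U_1$) and $A \in \Gl_n(F_0)$, it follows that $A_2' = A_1^{-1}A$ also lies in $\Gl_n(F_0)$.

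Thus $A_2'$ lies in $\Gl_n(F_{X\smallsetminus U_1}) \cap \Gl_n(F_0)$, where the intersection is taken inside $\Gl_n(F_\varnothing)$.  The key step is then to identify this intersection.  Since $(X \smallsetminus U_1) \cap U_0 = \varnothing$ (as $U_0 \subseteq U_1$), Theorem~\ref{globalintersection} applies to the pair $X \smallsetminus U_1, U_0$ and gives
$$F_{X\smallsetminus U_1} \cap F_0 \;=\; F_{(X\smallsetminus U_1)\cup U_0} \;=\; F_{X \smallsetminus (U_1 \smallsetminus U_0)}.$$
Finally, the containment $U_2 \subseteq X \smallsetminus (U_1 \smallsetminus U_0)$ holds because any point of $U_2 \cap U_1$ lies in $U_0$; hence $F_{X \smallsetminus (U_1 \smallsetminus U_0)} \leq F_{U_2} = F_2$, and so $A_2' \in \Gl_n(F_2)$.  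Setting $A_2 = A_2'$ completes the factorization.

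The main potential obstacle is justifying that the intersection theorem applies cleanly in this setting, i.e.\ that the set-theoretic identity $(X\smallsetminus U_1)\cup U_0 = X \smallsetminus (U_1\smallsetminus U_0)$ and the corresponding containment by $U_2$ are what allow the reduction to go through; everything else is a direct invocation of Theorems~\ref{globalfactorization} and~\ref{globalintersection}, and no new analytic input is needed.
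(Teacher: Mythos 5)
Your proof is correct and follows essentially the same strategy as the paper: apply the disjoint-patches factorization result Theorem~\ref{globalfactorization} to an auxiliary pair with empty intersection, note that $A_2' = A_1^{-1}A$ lies in $\GL_n(F_0)$, and then use Theorem~\ref{globalintersection} to pin down the field containing $A_2'$. The only (cosmetic) difference is the choice of disjoint complement: you use $V_2 = X \smallsetminus U_1$, which forces the extra observation that $(X \smallsetminus U_1)\cup U_0 = X\smallsetminus(U_1\smallsetminus U_0) \supseteq U_2$; the paper instead takes $U_2' = U_2 \smallsetminus U_0$, for which $U_2' \cup U_0 = U_2$ holds on the nose and Theorem~\ref{globalintersection} immediately gives $F_{U_2'} \cap F_0 = F_2$.
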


\begin{proof}
Let $U_2' =  U_2 \smallsetminus U_0$, and write $F_2' = F_{U_2'}$ and $F_0' = F_\varnothing$.  Any $A \in \GL_n(F_0)$ lies in $\GL_n(F_0')$, and so by Theorem~\ref{globalfactorization} we may write $A = A_1A_2$ with $A_1 \in \GL_n(F_1) \leq \GL_n(F_0)$ and $A_2 \in \GL_n(F_2')$.  But also $A_2 = A_1^{-1}A \in \GL_n(F_0)$; and $F_2' \cap F_0 = F_2$ by Theorem~\ref{globalintersection} since $U_2' \cup U_0 = U_2$.  So actually $A_2 \in \GL_n(F_2)$.
\end{proof}

\begin{remark} \label{nonzero} 
In Theorem~\ref{globalfactgeneral}, we cannot replace $\GL_n$ everywhere by $\Mat_n$, as the following example shows.  With notation as above, assume $U_0 \ne U_1, U_2$, and consider the matrix 
$$A = \begin{pmatrix} 1 & a_1 \\a_2 & a_1a_2\end{pmatrix} \in \Mat_n(F_0)$$
with $a_i \in F_i \smallsetminus F_U$.  If there is a factorization $A=A_1A_2$ with $A_i \in \Mat_n(F_i)$, then either $A_1$ or $A_2$ has determinant $0$, since $\det(A)=0$.  Without loss of generality, we may assume $\det(A_1) = 0$ (since otherwise we can interchange the roles of $U_1, U_2$ and consider the transpose of $A$).  So there exist $r,s \in F_1$, not both zero, such that $r(A_1)_1=s(A_1)_2$, where $(A_1)_i$ denotes the $i$th row of $A_1$.  Multiplying by $A_2$ on the right then gives the equality $r(A)_1=s(A)_2$ for the rows of $A$; in particular, $r=sa_2$.  If $s \ne 0$, then $a_2 = r/s \in F_1$.  By assumption, $a_2 \in F_2$, and thus $a_2 \in F_U$ by Theorem~\ref{globalintersection}; a contradiction.  Consequently, $s=0$, and thus $r=sa_2=0$, contradicting the fact that $r,s$ are not both zero.  Hence no such factorization can exist.   
\end{remark}

\subsection{Patching}

We now turn to our global patching result for function fields.
We consider an irreducible projective $T$-curve $\hat X$ with closed fibre $X$.  
For any subset $U \subseteq X$ we write $\V(U)$ for $\Vect(F_U)$, where $F_U$ is as in Notation~\ref{globalnotation}.

\begin{thm}\label{globalpatching}
 Let $T$ be a complete discrete
valuation ring and let  
$\hat X$ be a smooth connected projective $T$-curve with closed fibre $X$.
Let $U_1$, $U_2$ be subsets of $X$. 
Then the base change functor
$$\V(U_1 \cup U_2) \to  \V(U_1)\times_{\V(U_1\cap U_2)} \V(U_2)$$
is an equivalence of categories.
\end{thm}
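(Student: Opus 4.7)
The plan is to reduce the statement directly to Proposition~\ref{harb} applied to the triple of fields
$$F := F_{U_1 \cup U_2}, \quad F_1 := F_{U_1}, \quad F_2 := F_{U_2}, \quad F_0 := F_{U_1 \cap U_2},$$
and then invoke the two main technical results of the section, namely Theorem~\ref{globalfactgeneral} (factorization) and Theorem~\ref{globalintersection} (intersection). So the first step is just to check that the hypotheses of Proposition~\ref{harb} are in place. By Notation~\ref{globalnotation}, whenever $V \subseteq U$ we have $F_U \leq F_V$, so both $F_1$ and $F_2$ embed into $F_0$, and likewise $F$ embeds into $F_1$ and $F_2$. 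This puts us in the setting where Proposition~\ref{harb} applies.

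Next I would verify the intersection condition $F_1 \cap F_2 = F$ inside $F_0$: this is precisely the content of Theorem~\ref{globalintersection} with the roles of $U, U_0, U_1, U_2$ matching ours. Then I would verify the factorization condition that every $A \in \GL_n(F_0)$ may be written as $A_1 A_2$ with $A_i \in \GL_n(F_i)$: this is exactly Theorem~\ref{globalfactgeneral}. With both conditions established, Proposition~\ref{harb} immediately gives that the base change functor $\beta : \Vect(F) \to \Vect(F_1) \times_{\Vect(F_0)} \Vect(F_2)$ is an equivalence of categories, which is precisely the assertion of Theorem~\ref{globalpatching} after unwrapping the notation $\V(U) = \Vect(F_U)$.

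Since the heavy lifting has already been carried out, there is no genuine obstacle left at this stage — the proof is a matter of bookkeeping and citation. The only small point to be careful about is the direction of the inclusions among the $F_{U_i}$ (smaller $U$ gives larger $F_U$), so that one correctly identifies $F_0 = F_{U_1 \cap U_2}$ as the ``ambient'' field and $F = F_{U_1 \cup U_2}$ as the ``base'' field to which Proposition~\ref{harb} is applied. Once this matching is made explicit, the theorem follows in one line.
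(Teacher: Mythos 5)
Your proof is correct and follows the same route as the paper: apply Proposition~\ref{harb} with $F = F_{U_1 \cup U_2}$, $F_i = F_{U_i}$, $F_0 = F_{U_1 \cap U_2}$, using Theorem~\ref{globalfactgeneral} for the factorization condition and Theorem~\ref{globalintersection} for the intersection condition. The extra care you take with the direction of the inclusions $F_U \le F_V$ for $V \subseteq U$ is a good sanity check but matches exactly what the paper does.
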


\begin{proof}
In view of Proposition~\ref{harb}, 
the result follows from
the factorization result
Theorem~\ref{globalfactgeneral} and the intersection result
Theorem~\ref{globalintersection}.
\end{proof}

By Proposition~\ref{harb}, the inverse of the above equivalence of categories (up to isomorphism) is given by taking the fibre product of vector spaces.  

\begin{remark}
Theorem~\ref{globalpatching} can also be deduced just from Theorem~\ref{globalfactorization} and Theorem~\ref{globalintersection}, without using Theorem~\ref{globalfactgeneral}.  Namely, the case that $U_0=\varnothing$ follows with Theorem~\ref{globalfactorization} replacing 
Theorem~\ref{globalfactgeneral} in the above proof; and the general case then follows from that by setting $U_2' = U_2 \smallsetminus U_0$ and using the 
equivalences of categories
\begin{equation*}
\begin{split}
\V(U_1) \times_{\V(U_0)} \V(U_2) &= \V(U_1) \times_{\V(U_0)} 
(\V(U_0) \times_{\V(\varnothing) }\V(U_2'))\\ &=
  \V(U_1) \times_{\V(\varnothing)} \V(U_2') = \V(U_1
    \cup U_2') = \V(U_1 \cup U_2).
\end{split}
\end{equation*}
\end{remark}

Theorem~\ref{globalpatching} generalizes to a version that allows patching more than two vector spaces at the same time.
This next result will become important in later applications, where sometimes $U_0$ is empty.

\begin{thm}\label{globalpatchingseveral}
Let $T$ be a complete discrete
valuation ring and let  
$\hat X$ be a smooth connected projective $T$-curve with closed fibre $X$.
Let $U_1,\ldots,U_r$ denote subsets of $X$,  and assume that the pairwise intersections $U_i\cap U_j$ (for $i \ne j$) are all equal to a common subset $U_0 \subseteq X$. Let $U=\bigcup \limits_{i=1}^r U_i$. 
Then the base change functor
$$\V(U)\rightarrow \V(U_1) \times_{\V(U_0)} \cdots  \times_{\V(U_0)} \V(U_r)$$
is an equivalence of categories.
\end{thm}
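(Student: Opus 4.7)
The plan is to proceed by induction on $r$, with the base case $r=2$ being exactly Theorem~\ref{globalpatching}. So assume $r \ge 3$ and that the statement holds for $r-1$.

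For the inductive step, I would set $U' := U_1 \cup \cdots \cup U_{r-1}$ and first observe the key combinatorial point
$$U' \cap U_r = \bigcup_{i=1}^{r-1} (U_i \cap U_r) = U_0,$$
which uses the hypothesis that all pairwise intersections equal $U_0$. Together with $U' \cup U_r = U$, this lets me invoke Theorem~\ref{globalpatching} (the two-piece case) applied to the pair $U', U_r$ to conclude that the base change functor
$$\V(U) \to \V(U') \times_{\V(U_0)} \V(U_r)$$
is an equivalence of categories. Next, since the subsets $U_1,\dots,U_{r-1}$ still have all pairwise intersections equal to $U_0$, the inductive hypothesis applies and yields an equivalence
$$\V(U') \to \V(U_1) \times_{\V(U_0)} \cdots \times_{\V(U_0)} \V(U_{r-1}).$$

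To finish, I would splice these two equivalences together using associativity of the 2-fibre product, together with the fact that base change functors compose: the functor $\V(U') \to \V(U_0)$ used in the first fibre product corresponds, under the equivalence from the inductive hypothesis, to the functor that sends $(V_1,\dots,V_{r-1})$ to the common value of the $V_i \otimes_{F_{U_i}} F_{U_0}$. Consequently the iterated fibre product
$$\bigl(\V(U_1) \times_{\V(U_0)} \cdots \times_{\V(U_0)} \V(U_{r-1})\bigr) \times_{\V(U_0)} \V(U_r)$$
is equivalent to $\V(U_1) \times_{\V(U_0)} \cdots \times_{\V(U_0)} \V(U_r)$, and the resulting composition of functors is exactly the base change functor asserted in the theorem.

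There is essentially no analytic content here beyond what is already in Theorem~\ref{globalpatching}; the only thing requiring care is the purely categorical bookkeeping in the last step, i.e.\ checking that associativity of the 2-fibre product and compatibility of base change functors combine correctly. This is routine but is the only genuine step to verify, since the geometric heart of the argument (factorization and intersection for a single pair $U_1,U_r$) has already been fully absorbed into Theorem~\ref{globalpatching}.
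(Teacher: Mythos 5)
Your proof is correct and follows essentially the same inductive strategy as the paper: split off $U_r$, note that $(\bigcup_{i<r}U_i)\cap U_r = U_0$, apply Theorem~\ref{globalpatching} to the pair, and invoke the inductive hypothesis on the remaining $r-1$ sets. The only cosmetic difference is that the paper begins the induction at $r=1$ (trivial case) rather than $r=2$, and it passes over the final categorical bookkeeping step without comment, which you spell out.
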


\begin{proof}
We proceed by induction; the case $r=1$ is trivial. Since
$$\biggl(\,\bigcup\limits_{i=1}^{r-1} U_i\biggr)\cap U_r=\bigcup\limits_{i=1}^{r-1}
(U_i\cap U_r)=U_0,$$ Theorem~\ref{globalpatching} yields an equivalence of
categories
$$\V\biggl(\,\bigcup\limits_{i=1}^r
  U_i\biggr)=\V\biggl(\,\bigcup\limits_{i=1}^{r-1} U_i\biggr)\times_{\V(U_0)}\V(U_r).$$
By the inductive hypothesis, the first factor on the right hand side is equivalent to the category $\V(U_1) \times_{\V(U_0)} \cdots  \times_{\V(U_0)} \V(U_{r-1})$, proving the result.
\end{proof}

Note that by Theorem~\ref{globalintersection} and induction, 
$F_U$ is the intersection of the fields $F_{U_1},\dots,F_{U_r}$ inside $F_{U_0}$.  So as
with Theorem~\ref{globalpatching}, the inverse to the equivalence of categories (up to isomorphism) in Theorem~\ref{globalpatchingseveral} is given by taking the fibre product of the given $F_{U_i}$-vector spaces ($i=1,\dots,r$) over the given  $F_{U_0}$-vector space; this is by Proposition~\ref{patchprob}.

\section{The Complete Local Case}\label{localsection}
In this section, we will prove a different patching result, 
in which complete local rings are used at one or more points, and which is related to results in \cite{FPABP}, Section~1. The proof here relies on the case
dealt with in Section~\ref{globalsection}. Again, the ingredients we need are a
factorization result and an intersection result. We use the following

\begin{notation}\label{localnotation}
Let $\hat R$ be a $2$-dimensional regular local domain with maximal ideal $\m$
and local parameters $f,t$, such that $\hat R$ is $t$-adically complete. 
Let $\hat R_1$ be the $\m$-adic completion of $\hat R$, let $\hat R_2$ be the
$t$-adic completion of $\hat R[f^{-1}]$, and let $\hat R_0$ be the $t$-adic
completion of $\hat R_1[f^{-1}]$.  
Also let $\bar R := \hat R/t\hat R$ and
let $\bar R_i = \hat R_i/t\hat R_i$ for $i=0,1,2$.
\end{notation}

\begin{lemma}\label{notationlemma}
In the context of Notation~\ref{localnotation}, the following hold:
\renewcommand{\theenumi}{\alph{enumi}}
\begin{enumerate}
\item $\hat R_1$ is the $f$-adic completion of $\hat R$, and 
$\hat R \le \hat R_i \le \hat R_0$ for $i=1,2$.\label{notlema}
\item $t\hat R_i \cap \hat R = t\hat R$ for $i=0,1,2$, and
$t\hat R_0 \cap \hat R_i = t\hat R_i$ for $i=1,2$.\label{notlemb}
\item $\hat R_2$ and $\hat R_0$ are complete discrete valuation rings with
uniformizer $t$; and
$\bar R$ and $\bar R_2$ are discrete valuation rings with
uniformizer $\bar f$, the mod $t$ reduction of $f$.\label{notlemc}
\item $\bar R_1$ is
the $\bar f$-adic completion of $\bar R$; while $\bar R_2$ and $\bar R_0$ are respectively isomorphic to 
$\bar R[\bar f^{-1}]$ and $\bar R_1[\bar f^{-1}]$, the fraction fields of $\bar R$ and $\bar R_1$.\label{notlemd}
\item $\bar R \le \bar R_i \le \bar R_0$ for $i=1,2$, with $\bar R_1 \cap \bar R_2 = \bar R$ inside $\bar R_0$.\label{notleme}
\end{enumerate}
\end{lemma}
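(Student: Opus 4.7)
The plan is to treat the parts in the order (a), (c), (b), (d), (e), since the identifications in (c) drive everything else. The central technical step is to identify $\hat R_2$ and $\hat R_0$ with the $t$-adic completions of the height-one-prime localizations $\hat R_{(t)}$ and $(\hat R_1)_{(t)}$. Once this is established, the remaining assertions reduce to standard DVR manipulations.

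For (a), the cofinality $(f,t)^n \supseteq (f^n,t^n) \supseteq (f,t)^{2n}$ identifies the $\m$-adic completion of $\hat R$ with $\varprojlim_n \hat R/(f^n,t^n)$; since $\hat R$ is $t$-adically complete, so is each $\hat R/f^n$, and the iterated inverse limit collapses to $\varprojlim_n \hat R/f^n$, the $f$-adic completion. The inclusion $\hat R \hookrightarrow \hat R_1$ follows from Krull separation; $\hat R \hookrightarrow \hat R_2$ follows from $t$-adic separatedness of $\hat R[f^{-1}]$, which in turn holds because $f$ is a non-zero-divisor modulo $t^n\hat R$ for every $n$ (as $t, f$ is a regular sequence in the regular local ring $\hat R$). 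The inclusions $\hat R_i \hookrightarrow \hat R_0$ follow once (c) is in place, since the maps on $t$-adic completions will be induced by maps of DVRs sending the uniformizer $t$ to $t$.

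For (c), the key observation is that $(\hat R/t^n)[f^{-1}]$ is local Artinian with unique prime $(t)/t^n$: inverting $f$ removes the maximal ideal $\m/t^n$ but not $(t)/t^n$, and the residue field is the fraction field of $\bar R = \hat R/t\hat R$. Hence every $b \in \hat R \smallsetminus (t)$ has nonzero image in this residue field and is therefore a unit in $\hat R[f^{-1}]/t^n$; by the universal property of localization this identifies $\hat R[f^{-1}]/t^n$ with $\hat R_{(t)}/t^n$. Passing to the inverse limit shows that $\hat R_2$ is the $t$-adic completion of the DVR $\hat R_{(t)}$, hence a CDVR with uniformizer $t$; the same argument with $\hat R_1$ in place of $\hat R$ handles $\hat R_0$. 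The DVR assertions on $\bar R$ and $\bar R_1$ are then immediate, since $\hat R/t\hat R$ and $\hat R_1/t\hat R_1$ are one-dimensional regular local rings with principal maximal ideal $(\bar f)$.

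Parts (b), (d), (e) then follow easily. For (b), the $t$-adic valuation on the CDVR $\hat R_0$ restricts, through the chains $\hat R \subseteq \hat R_{(t)} \subseteq \hat R_2 \subseteq \hat R_0$ and $\hat R \subseteq \hat R_1 \subseteq (\hat R_1)_{(t)} \subseteq \hat R_0$, to the $t$-adic valuations on $\hat R_{(t)}$ and $(\hat R_1)_{(t)}$; the standard fact that the contraction of the maximal ideal of a localization at a height-one prime in a regular local ring is the original prime then yields all the intersection identities in (b). For (d), reducing (a) modulo $t$ (legitimate because $t$ is a non-zero-divisor on each $\hat R/f^n$) gives $\bar R_1 = \varprojlim_n \bar R/\bar f^n$, the $\bar f$-adic completion of $\bar R$, while $\bar R_2$ and $\bar R_0$ are by (c) the residue fields of the CDVRs $\hat R_2$ and $\hat R_0$, hence the fraction fields of $\bar R$ and $\bar R_1$ respectively. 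For (e), the inclusions are immediate from (d), and an element of $\bar R_2$ lying in the DVR $\bar R_1$ has nonnegative $\bar f$-adic valuation and therefore lies in $\bar R$, giving $\bar R_1 \cap \bar R_2 = \bar R$. I expect the main obstacle to be (c), and specifically the identification of $\hat R_2$ with the $t$-adic completion of $\hat R_{(t)}$, since $\hat R[f^{-1}]$ a priori has many maximal ideals beyond $(t)$ and one must verify carefully that all but $(t)$ disappear upon $t$-adic completion.
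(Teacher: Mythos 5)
Your proof is correct but takes a genuinely different route from the paper's. The paper proves the injections in~(a) directly: it invokes Bourbaki's separatedness criterion (that $\bigcap I^n = (0)$ whenever no element of $1+I$ is a zero-divisor) to show each completion map is injective, and then runs an explicit $f$-adic density argument to establish $t^j\hat R_1 \cap \hat R = t^j\hat R$, from which the injectivity of $\hat R_2 \to \hat R_0$ follows; parts~(b) and~(c) are then extracted from that computation, with~(c) stated tersely as the observation that $\hat R[f^{-1}]$ and $\hat R_1[f^{-1}]$ are one-dimensional regular domains whose $t$-adic completions are therefore DVRs. You instead bring the structural identification to the front: by showing $(\hat R/t^n)[f^{-1}]$ is local Artinian with unique prime $(t)/t^n$, you identify $\hat R_2$ and $\hat R_0$ with the $t$-adic completions of the localizations $\hat R_{(t)}$ and $(\hat R_1)_{(t)}$, after which the remaining injections in~(a) and the intersection identities in~(b) drop out of basic DVR theory (local maps of DVRs sending $t$ to $t$, and contraction of the maximal ideal of a height-one localization). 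Your route sidesteps the paper's density argument entirely and makes explicit a step the paper's~(c) leaves implicit, namely that $(t)$ is the \emph{unique} prime of $\hat R[f^{-1}]$ (and of $\hat R_1[f^{-1}]$) containing $t$ --- without which the $t$-adic completion of a one-dimensional regular domain need not be a DVR. What the paper's route buys in exchange is that it avoids the localization/residue-field analysis and stays closer to bare-hands intersection computations. Your reordering (establishing~(c) before finishing~(a) and~(b)) is sound, since the proof of~(c) relies only on $t,f$ being a regular system of parameters and does not presuppose the injectivity statements. One incidental remark: the statement of~(c) reads ``$\bar R$ and $\bar R_2$ are discrete valuation rings,'' but by~(d) the ring $\bar R_2$ is the fraction field of $\bar R$; this is evidently a misprint for $\bar R_1$, and both your argument and the paper's proof establish the DVR property for $\bar R$ and $\bar R_1$, which is what is actually needed.
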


\begin{proof}
(\ref{notlema}) Since $\hat R$ is $t$-adically complete, $\hat R = \displaystyle \lim_\leftarrow \hat R/t^j\hat R$.  In $\hat R$, $(f^{2n},t^{2n}) \subset \m^{2n} \subset (f^n,t^n)$ for all $n \ge 0$.  So the $f$-adic completion of $\hat R$ is $\displaystyle \lim_\leftarrow \hat R/f^i\hat R = 
\lim_\leftarrow  \lim_\leftarrow (\hat R/t^j\hat R)/f^i(\hat R/t^j\hat R) = 
\lim_\leftarrow  \lim_\leftarrow \hat R/(f^i,t^j) =  \lim_\leftarrow \hat R/\m^n = \hat R_1$.  This proves the first part of (\ref{notlema}).

According to \cite{bourbaki}, III, \S3.2, Corollary to Proposition~5, given an ideal $I$ in a commutative ring $A$, the intersection $\bigcap I^n$ is equal to $(0)$ if no element of $1+I$ is a zero-divisor.  Hence the completion maps $\hat R \to \hat R_1$, 
$\hat R[f^{-1}] \to \hat R_2$, and $\hat R_1[f^{-1}] \to \hat R_0$ are injections.  Thus so are $\hat R \to \hat R_2$ and $\hat R_1 \to \hat R_0$.  

It remains to show that $\hat R_2 \to \hat R_0$ is injective.
Since the image $\bar f$ of $f$ is in the maximal ideal of $\hat R/t^j\hat R$, no element of $1 + (\bar f) \subseteq \hat R/t^j\hat R$ is a zero-divisor.  The above result in \cite{bourbaki} then implies that 
$\bigcap_{n=1}^\infty (\bar f^n) = (0)\subset \hat R/t^j\hat R$ for $j \ge 1$.  Hence 
$\bigcap_{n=1}^\infty (t^j,f^n) = (t^j)\subset \hat R$ for each $j$.
Meanwhile, since $\hat R_1$ is the $f$-adic completion of $\hat R$ (as shown above), $\hat R \cap f^n\hat R_1 = f^n\hat R$, and $t^j\hat R$ is $f$-adically dense in $t^j\hat R_1$.  By this density, if $g \in t^j\hat R_1 \cap \hat R$, then $g=t^jr + f^nr_1$ for some $r \in \hat R$ and $r_1 \in \hat R_1$.  But then $f^nr_1 \in \hat R \cap f^n\hat R_1 = f^n\hat R$; i.e.\ $r_1 \in \hat R$.  This shows that $g$ lies in the ideal $(t^j,f^n) \subset \hat R$.  Since this holds for all $n$, 
and since $\bigcap_{n=1}^\infty (t^j,f^n) = t^j\hat R$, it follows that $g \in t^j\hat R$.  That is, $t^j\hat R_1 \cap \hat R \subseteq t^j\hat R$.  The reverse containment is trivial, and so $t^j\hat R_1 \cap \hat R = t^j\hat R$. Hence $t^j\hat R_1[f^{-1}] \cap \hat R[f^{-1}] = t^j\hat R[f^{-1}]$ for all $j \ge 1$; thus the map $\hat R_2 \to \hat R_0$ on completions is injective.

(\ref{notlemb})  It was shown in the proof of part (\ref{notlema}) that $t\hat R_1 \cap \hat R = t\hat R$.  So if $g \in t\hat R_1[f^{-1}] \cap \hat R$, then $f^ng \in t\hat R_1 \cap \hat R = t\hat R$ for some $n$, and hence $g \in t\hat R$ because $f,t$ are a system of local parameters.  Passing to the $t$-adic completion preserves the $t$-adic metric, and so $t\hat R_0 \cap \hat R \subseteq t\hat R$.  The reverse containment is trivial; hence 
$t\hat R_0 \cap \hat R = t\hat R$.  Since $t\hat R_2 \subseteq t\hat R_0$, we then also have $t\hat R_2 \cap \hat R = t\hat R$.

Since the $t$-adic metric is preserved under $t$-adic completion, to prove that $t\hat R_0 \cap \hat R_1 = t\hat R_1$ it suffices to show that $t\hat R_1[f^{-1}] \cap \hat R_1 = t\hat R_1$.  Say $g$ is in the left hand side.  Then for some $n \ge 1$, $f^ng \in t\hat R_1 \cap f^n\hat R_1 = tf^n\hat R_1$; i.e., $g \in t\hat R_1$.  Thus $t\hat R_1[f^{-1}] \cap \hat R_1 \subseteq t\hat R_1$, and the reverse containment is trivial. Finally, the equality $t\hat R_0 \cap \hat R_2 = t\hat R_2$ follows from the assertion $t\hat R_1[f^{-1}] \cap \hat R[f^{-1}] = t\hat R[f^{-1}]$ shown in the proof of part (\ref{notlema}).

(\ref{notlemc}) Since $f \in \m$, the rings $\hat R[f^{-1}]$ and
$\hat R_1[f^{-1}]$ are regular domains of dimension one; and their $t$-adic completions $\hat R_2$ and $\hat R_0$ are thus complete discrete valuation rings with uniformizer $t$.

Since $f,t$ form a system of local parameters at the maximal ideals of the two-dimensional regular local domains $\hat R$ and $\hat R_1$, the reduction $\bar f$ is a local parameter for the reductions $\bar R$ and $\bar R_1$, which are one-dimensional regular local domains.  That is, $\bar R$ and $\bar R_1$ are discrete valuation rings with uniformizer $\bar f$.

(\ref{notlemd}) The first assertion follows from the fact that $\hat R_1$ is the $f$-adic completion of $\hat R$ (proven in part (\ref{notlema})).  The second assertion follows from part  (\ref{notlemc}) and the definitions of $\hat R_2$ and $\hat R_0$.
 
(\ref{notleme}) These assertions follow from the characterizations of $\bar R, \bar R_1, \bar R_2, \bar R_0$ in part (\ref{notlemd}).  
\end{proof}

\subsection{Factorization}

\begin{lemma}\label{localsum}
In the context of Notation~\ref{localnotation},  for every $a \in \hat R_0$ there exist 
$b \in \hat R_1$ and $c \in \hat R_2$ such that $a \cong b+c\; (\mod  t\hat R_0)$.
\end{lemma}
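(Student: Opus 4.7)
My plan is to reduce the problem modulo $t$ and then exploit the explicit description of the quotient rings given by Lemma~\ref{notationlemma}. By that lemma, $\bar R_1$ is the $\bar f$-adic completion of the discrete valuation ring $\bar R$, while $\bar R_2$ and $\bar R_0$ are the fraction fields of $\bar R$ and $\bar R_1$ respectively. So it suffices to prove the following statement in the fraction field of the complete DVR $\bar R_1$:
$$\bar R_0 = \bar R_1 + \bar R_2 = \bar R_1 + \bar R[\bar f^{-1}].$$
Any lift $b \in \hat R_1$ of an element $\bar b \in \bar R_1$ and any lift $c \in \hat R_2$ of $\bar c \in \bar R_2$ satisfying $\bar a = \bar b + \bar c$ will then give the desired congruence, since the natural maps $\hat R_i \to \hat R_0/t\hat R_0$ factor through $\bar R_i \hookrightarrow \bar R_0$ by parts (\ref{notlemb}) and (\ref{notleme}) of Lemma~\ref{notationlemma}.

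To establish the reduction mod $t$ statement, let $\bar a \in \bar R_0 = \mathrm{Frac}(\bar R_1)$. Since $\bar R_1$ is a discrete valuation ring with uniformizer $\bar f$, I can write $\bar a = \bar f^{-N} u$ for some $N \geq 0$ and some $u \in \bar R_1$. Because $\bar R_1$ is the $\bar f$-adic completion of $\bar R$, there exists $v \in \bar R$ with $u - v \in \bar f^N \bar R_1$.

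Now set $\bar c := \bar f^{-N} v \in \bar R[\bar f^{-1}] = \bar R_2$ and $\bar b := \bar a - \bar c = \bar f^{-N}(u - v)$. Since $u - v \in \bar f^N \bar R_1$, the element $\bar b$ lies in $\bar R_1$, and by construction $\bar a = \bar b + \bar c$. Choosing any lifts $b \in \hat R_1$ and $c \in \hat R_2$ of $\bar b$ and $\bar c$ respectively completes the proof, since then $a - b - c$ reduces to zero in $\bar R_0 = \hat R_0/t\hat R_0$.

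There is no substantial obstacle here: all the work has been packaged into Lemma~\ref{notationlemma}, which identifies the mod $t$ reductions as a complete DVR, its fraction field, and the dense subring with its fraction field. The argument is just the classical observation that in a DVR $\bar R$ with $\bar f$-adic completion $\bar R_1$, the fraction field of $\bar R_1$ is the sum of $\bar R_1$ and $\mathrm{Frac}(\bar R)$, obtained by approximating the numerator in a Laurent expansion $\bar f^{-N} u$ well enough to absorb the negative powers of $\bar f$ into $\bar R$.
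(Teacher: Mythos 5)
Your proof is correct and follows essentially the same approach as the paper: reduce mod $t$, write $\bar a = \bar f^{-N}u$ with $u \in \bar R_1$, use the $\bar f$-adic density of $\bar R$ in $\bar R_1$ to approximate $u$ by some $v \in \bar R$ well enough to absorb the negative power of $\bar f$, and lift. The only cosmetic difference is that the paper splits into the cases $v_{\bar f}(\bar a) \ge 0$ and $v_{\bar f}(\bar a) < 0$, whereas you handle both uniformly with $N \ge 0$.
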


\begin{proof}
We may assume $a \ne 0$. Write $v_{\bar f}$ for the $\bar f$-adic valuation on $\bar{R}_0$. 
Let $\bar a$ be the image of $a$ in $\bar R_0 = \hat R_0/t\hat R_0$.  If $v_{\bar f}(\bar a) \ge 0$, then 
$\bar a \in \bar R_1$; and so there exists $b \in \hat R_1$ such that 
$a \equiv b\; (\mod  t\hat R_0)$.  Taking $c=0$ completes 
the proof in this case.  Alternatively, if $v_{\bar f}(\bar a) = -r < 0$, then $f^r a$ has the 
property that its reduction modulo $t\hat R_0$ lies in $\bar R_1 \subset \bar R_0$, since the $\bar f$-adic valuation of 
this reduction is $0$.  Since $\bar R$ is $\bar f$-adically dense in $\bar R_1$ by Lemma~\ref{notationlemma}(d), there exists 
$\bar d \in \bar R$ such that $\bar d \equiv \bar f^r \bar a\; (\mod \bar f^r\bar R_1)$. 
Let $\bar c = \bar f^{-r}\bar d \in \bar R_2$.  Then $\bar f^r(\bar a - \bar c) = \bar f^r\bar a - \bar d \in \bar f^r\bar R_1$,
and so $\bar a - \bar c$ is equal to some element $\bar b \in \bar R_1$.  
Choosing $b \in \hat R_1$ lying over $\bar b$, and $c \in \hat R_2$ lying over $\bar c$,  
completes the proof.
\end{proof}

\begin{thm}\label{localfactorization} 
In the context of Notation~\ref{localnotation}, let $F_i$ be the fraction field of $\hat R_i$.
Then for every $A \in \GL_n(F_0)$ there exist $A_1 \in \GL_n(F_1)$
 and $A_2 \in \GL_n(F_2)$ such that $A=A_1A_2$.
\end{thm}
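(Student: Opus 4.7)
The plan is to mimic the proof of Theorem~\ref{globalfactorization} almost verbatim, using Lemma~\ref{notationlemma} to check the technical hypotheses and Lemma~\ref{localsum} to supply the required sum decomposition.

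First, I would set up the reduction to matrices close to the identity via Lemma~\ref{conditionalfactorization}(b). The hypotheses of that lemma are exactly what Notation~\ref{localnotation} and Lemma~\ref{notationlemma} provide: $\hat R_0$ is a complete discrete valuation ring with uniformizer $t$ by Lemma~\ref{notationlemma}(c); $\hat R_1,\hat R_2 \le \hat R_0$ are $t$-adically complete subrings containing $t$ by Notation~\ref{localnotation}; and $\bar R_1 = \hat R_1/t\hat R_1$ is a discrete valuation ring (hence a domain) by Lemma~\ref{notationlemma}(c), whose fraction field is $\bar R_0 = \hat R_0/t\hat R_0$ by Lemma~\ref{notationlemma}(d). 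So it suffices to factor matrices $A \in \Gl_n(\hat R_0)$ satisfying $A \equiv I \;(\mod t\Mat_n(\hat R_0))$.

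Next, I would apply Proposition~\ref{factorizationcloseto1} with $M_1 := \hat R_1$, taking as the ambient $T$ any complete discrete valuation ring with uniformizer $t$ contained in $\hat R$ (for instance, the $t$-adic completion of the prime subring of $\hat R$ localized along $(t)$, or equivalently the $t$-adic closure of $\mathbb{Z}[t]$ or $k[t]$ inside $\hat R$). The module $M_1 = \hat R_1$ is $t$-adically complete by Notation~\ref{localnotation} and sits inside $\hat R_0 \cap F_1$ by Lemma~\ref{notationlemma}(a), and the crucial sum-decomposition hypothesis of Proposition~\ref{factorizationcloseto1} is precisely the content of Lemma~\ref{localsum}. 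Consequently, every $A \in \Gl_n(\hat R_0)$ with $A \equiv I \;(\mod t\Mat_n(\hat R_0))$ admits a factorization $A = A_1 A_2$ with $A_1 \in \Mat_n(\hat R_1) \subseteq \Gl_n(F_1)$ and $A_2 \in \Gl_n(\hat R_2) \le \Gl_n(F_2)$.

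Combining these two inputs, Lemma~\ref{conditionalfactorization}(b) promotes this to a factorization of an arbitrary $A \in \Mat_n(\hat R_0)$ with $\det A \ne 0$. Finally, for a general $A \in \Gl_n(F_0)$, choose $r \ge 0$ so that $t^r A \in \Mat_n(\hat R_0)$; factor $t^r A = A_1' A_2$ as above and set $A_1 = t^{-r} A_1' \in \Gl_n(F_1)$, yielding $A = A_1 A_2$. I do not anticipate a serious obstacle, since every step parallels the global argument; the only point requiring care is the verification (via Lemma~\ref{notationlemma}) that the local data of Notation~\ref{localnotation} really do satisfy the hypotheses of Lemma~\ref{conditionalfactorization} and Proposition~\ref{factorizationcloseto1}.
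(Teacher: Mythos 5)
Your proposal is correct and follows the paper's proof essentially verbatim: verify the hypotheses of Lemma~\ref{conditionalfactorization} via Lemma~\ref{notationlemma}(c),(d); apply Proposition~\ref{factorizationcloseto1} with $M_1 = \hat R_1$ using Lemma~\ref{localsum}; then pass from matrices close to $I$ to general $A \in \Mat_n(\hat R_0)$ with nonzero determinant via Lemma~\ref{conditionalfactorization}, and finally to $\GL_n(F_0)$ by extracting a power of $t$. One small remark: your construction of the auxiliary base ring $T$ (as the $t$-adic closure of the prime subring localized at $(t)$) is not clearly valid in mixed characteristic (where that localization need not embed in $\hat R$, since $p$ may lie in $t\hat R$), but this is harmless because the proof of Proposition~\ref{factorizationcloseto1} never actually makes use of $T$ --- the hypotheses it really invokes are the $t$-adic completeness of $\hat R_0,\hat R_1,\hat R_2,M_1$ and the additive decomposition, all of which you have verified --- which is presumably why the paper's own proof does not bother to specify a choice of $T$.
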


\begin{proof}
By Lemma~\ref{notationlemma}(d), $\bar R_0$ is a field. By Lemma~\ref{localsum}, the module $M_1:=\hat{R}_1 \subset \hat R_0$ satisfies the hypothesis of
Proposition~\ref{factorizationcloseto1}.  So in the case of matrices $A \in \GL_n(\hat R_0)$ that are congruent to the identity modulo $t\hat R_0$, the assertion follows from that proposition.  The 
result for an arbitrary matrix $A \in \Mat_n(\hat R_0)$ with non-zero determinant then follows from  
Lemma~\ref{conditionalfactorization} (whose other hypotheses are satisfied, by parts (\ref{notlemc}) and (\ref{notlemd}) of Lemma~\ref{notationlemma}).
Finally, the general case of a matrix $A \in \GL_n(F_0)$ then follows since $t^rA \in \Mat(\hat R_0)$ with non-zero determinant for some $r \ge 0$, and since $t^rI \in \GL_n(F_1)$. 
\end{proof}

\subsection{Intersection}
The proof of Weierstrass preparation in the local case does not entirely
parallel the global case; instead, we require the following lemma.

\begin{lemma}\label{localproduct}
In the context of Notation~\ref{localnotation}, every unit $a \in \hat R_0^\times$ may be written as $a=bc$ 
for some units $b \in \hat R_1^\times$ and $c \in \hat R_2^\times$.
\end{lemma}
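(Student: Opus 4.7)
The plan is to split off the $\bar f$-adic part of $a$ modulo $t$, thereby reducing to the case of an element congruent to $1$ modulo $t\hat R_0$, and then to invoke the factorization machinery of Proposition~\ref{factorizationcloseto1} exactly as in the proof of Theorem~\ref{localfactorization}.

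First, I would examine the reduction $\bar a \in \bar R_0$. By Lemma~\ref{notationlemma}(d), $\bar R_0$ is the fraction field of the discrete valuation ring $\bar R_1$ with uniformizer $\bar f$, so we may write $\bar a = \bar b_0 \bar f^m$ for $m = v_{\bar f}(\bar a) \in \mathbb Z$ and some $\bar b_0 \in \bar R_1^\times$. Lifting $\bar b_0$ to an element $b_0 \in \hat R_1$ produces a unit of $\hat R_1$, since $\hat R_1$ is local with maximal ideal $(f,t)$ and $\bar b_0 \notin \bar f \bar R_1$. Also, $f$ is a unit in $\hat R_2 \supseteq \hat R[f^{-1}]$, so $f^m \in \hat R_2^\times$ for every $m \in \mathbb Z$. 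Setting $e := a \cdot (b_0 f^m)^{-1} \in \hat R_0^\times$, we obtain $e \equiv 1 \;(\mod t\hat R_0)$.

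It then suffices to write $e = b' c'$ with $b' \in \hat R_1$ and $c' \in \hat R_2$, each congruent to $1$ modulo $t$; such $b', c'$ are automatically units, since $\hat R_1$ is local with $t$ in its maximal ideal and $\hat R_2$ is a discrete valuation ring with uniformizer $t$. I would apply Proposition~\ref{factorizationcloseto1} to $e$, taking $n=1$ and $M_1 := \hat R_1$: the required sum-decomposition hypothesis is precisely Lemma~\ref{localsum}, and the remaining hypotheses are provided by Lemma~\ref{notationlemma}, just as in the proof of Theorem~\ref{localfactorization}. The iterative construction in that proposition produces $b'$ and $c'$ as $t$-adic limits of sequences starting from $B_0 = C_0 = 1$, so both factors are congruent to $1$ modulo $t$ as required. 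The final factorization is $a = (b_0 b')(f^m c')$, with $b_0 b' \in \hat R_1^\times$ and $f^m c' \in \hat R_2^\times$.

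I do not expect a serious obstacle: the heavy lifting has already been done in Lemma~\ref{localsum} and Proposition~\ref{factorizationcloseto1}. The only subtlety is the bookkeeping around the initial reduction modulo $t$, where the integer $m$ must absorb the full $\bar f$-adic valuation of $\bar a$ into the $\hat R_2$-factor, and the chosen lifts must land in the appropriate unit groups — both points being immediate consequences of the local structure described in Lemma~\ref{notationlemma}.
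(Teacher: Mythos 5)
Your argument is correct, and the initial reduction step (splitting off $\bar f^m$ times a unit of $\bar R_1$, then dividing $a$ by a lifted unit of $\hat R_1$ and a power of $f$ in $\hat R_2$) is exactly what the paper does. Where you diverge from the paper is in the second step: you invoke Proposition~\ref{factorizationcloseto1} directly (with $n=1$ and $M_1 = \hat R_1$, via Lemma~\ref{localsum}, just as in Theorem~\ref{localfactorization}), whereas the paper re-runs the inductive construction from scratch inside the proof of this lemma. Your route is more economical, but it comes with the caveat you yourself flag: the \emph{statement} of Proposition~\ref{factorizationcloseto1} only asserts $A_1 \in \Mat_n(M_1) \cap \GL_n(F_1)$ and $A_2 \in \GL_n(\hat R_2)$, which for $n=1$ gives $b' \in \hat R_1$ and $c' \in \hat R_2^\times$ but does \emph{not} say $b' \in \hat R_1^\times$ (indeed $\hat R_1$ is not inversely closed in $\hat R_0$; e.g.\ $f$ is a unit of $\hat R_0$ but not of $\hat R_1$). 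To get $b'$ to be a unit you must extract the congruences $b' \equiv 1 \; (\mod t\hat R_1)$ and $c' \equiv 1 \; (\mod t\hat R_2)$ from the inductive construction inside the proof of that proposition, which you do correctly. The paper avoids this reach into a prior proof by repeating the induction with the unit conditions $b_m, c_m \equiv 1 \; (\mod t)$ carried explicitly through; in exchange, your version avoids duplicating work. A clean way to reconcile the two would be to strengthen the statement of Proposition~\ref{factorizationcloseto1} to record $A_1 \equiv I \; (\mod t\Mat_n(M_1))$ and $A_2 \equiv I \; (\mod t\Mat_n(\hat R_2))$, which its proof in fact establishes; with that amendment your argument cites only statements, not proofs.
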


\begin{proof}
Since $a \in \hat R_0^\times$,
$a \not \equiv 0 \; (\mod t\hat R_0)$.  So the reduction of $a$ modulo $t\hat R_0$ 
is a non-zero element of $\bar R_0 = \bar R_1[\bar f^{-1}]$, 
and hence is of the form $\bar f^s \bar u$ for some integer $s$ and some unit 
$\bar u \in \bar R_1$.  Choose $u \in \hat R_1$ 
with reduction $\bar u$.  Thus $u$ is a unit in the $t$-adically complete ring $\hat R_1$ and $f^s$ is a unit in 
$\hat R_2$.  Replacing $a$ by $u^{-1} a f^{-s}$, we may assume that 
$a \equiv 1 \; (\mod t\hat R_0)$.  

Since $\hat R_1, \hat R_2$ are $t$-adically complete, it now suffices to define sequences of units $b_m \in \hat R_1$, $c_m \in \hat R_2$ such that 
$$b_{m+1} \equiv b_m \; (\mod t^{m+1}\hat R_1), \ c_{m+1} \equiv c_m  \; (\mod t^{m+1}\hat R_2), \ a \equiv b_m c_m \; (\mod t^{m+1}\hat R_0)$$
for all $m \ge 0$.  This will be done inductively.

Take $b_0=1$, $c_0=1$.  Suppose $b_{m-1}$ and $c_{m-1}$ have been defined, with $m \ge 1$.  
Thus $b_{m-1} \equiv 1 \; (\mod t\hat R_1)$, $c_{m-1} \equiv 1 \; (\mod t\hat R_2)$, and $d_m := a b_{m-1}^{-1} - c_{m-1}$ is divisible by 
$t^m$ in $\hat R_0$.
So $d_m=\delta_m t^m$ for some $\delta_m \in \hat R_0$; denote its reduction  modulo $t\hat R_0$ by
$\bar \delta_m \in \bar R_0$.  For some non-negative integer $i$ we have $\bar f^i\bar\delta_m \in \bar R_1$.  But $\bar R$ is $\bar f$-adically dense in $\bar R_1$; so there 
exists $\bar\varepsilon_{m} \in \bar R$ such that  
$\bar\varepsilon_m \equiv \bar f^i\bar\delta_m \; (\mod \bar f^i\bar R_1)$.  So 
$\bar b_m' := \bar\delta_m - \bar f^{-i} \bar\varepsilon_m \in \bar R_1$
and
$\bar c_m' := \bar f^{-i} \bar\varepsilon_m \in \bar R[\bar{f}^{-1}] = \bar R_2$.  Choose elements 
$b_m' \in \hat R_1$ and $c_m' \in \hat R_2$ respectively lying over $\bar b_m' \in \bar R_1$ 
and $\bar c_m' \in \hat R_2$, and let $b_m = b_{m-1} + b_m't^m \in \hat R_1$ and
$c_m = c_{m-1} + c_m't^m \in \hat R_2$.  Thus $b_m \equiv b_{m-1} \; (\mod t^m\hat R_1)$, \ $c_m \equiv c_{m-1} \; (\mod t^m\hat R_2)$, and 
$a b_{m-1}^{-1} - c_{m-1} = d_m = \delta_m t^m \equiv b_m't^m + c_m't^m \; (\mod t^{m+1}\hat R_0)$. 
So $a \equiv b_{m-1}c_{m-1}+b_{m-1}b_m't^m+b_{m-1}c_m't^m
\equiv b_{m-1}c_m + b_m't^m \equiv b_mc_m \; (\mod t^{m+1}\hat R_0)$, 
using that $b_{m-1} \equiv 1 \; (\mod t\hat R_1)$, $c_m \equiv 1 \; (\mod t\hat R_2)$. 
\end{proof}

\begin{prop}[Local Weierstrass Preparation]\label{localweierstrass} 
In the context of Notation~\ref{localnotation}, let $F$ be the fraction field of $\hat R$.  
Then every 
element of $\hat R_1$ is the product of an element of $F$ and a unit in $\hat R_1$.
\end{prop}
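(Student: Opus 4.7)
The plan is to reduce the statement to Lemma~\ref{localproduct} and the intersection machinery of Lemma~\ref{generalintersect}. Let $a \in \hat R_1$ be nonzero; we wish to write $a = bu$ with $b \in F$ and $u \in \hat R_1^\times$. First I would factor out the highest power of $t$ dividing $a$ in $\hat R_0$. By Lemma~\ref{notationlemma}(c), $\hat R_0$ is a complete discrete valuation ring with uniformizer $t$, so $a = t^j w$ in $\hat R_0$ for some $j \ge 0$ and $w \in \hat R_0^\times$. Iterating $t\hat R_0 \cap \hat R_1 = t\hat R_1$ from Lemma~\ref{notationlemma}(b) yields $t^j \hat R_0 \cap \hat R_1 = t^j \hat R_1$, and hence $w \in \hat R_1$. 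Since $t^j \in \hat R \subseteq F$, it suffices to prove the statement for $w$, which is a unit in $\hat R_0$.

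Next I would apply Lemma~\ref{localproduct} to factor $w = g_1 g_2$ with $g_1 \in \hat R_1^\times$ and $g_2 \in \hat R_2^\times$. Because $w$ and $g_1$ both lie in $\hat R_1$ and $g_1$ is a unit there, the factor $g_2 = w/g_1$ lies in $\hat R_1$ as well. Thus $g_2 \in \hat R_1 \cap \hat R_2$ inside $\hat R_0$.

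The crux of the argument is then to identify this intersection: I claim $\hat R_1 \cap \hat R_2 = \hat R$. For this I would invoke Lemma~\ref{generalintersect} with $M = \hat R$ and $M_i = \hat R_i$ for $i=0,1,2$. Its hypotheses are supplied directly by Lemma~\ref{notationlemma}: the rings are $t$-torsion free (they are domains), $\hat R$ is $t$-adically complete by assumption in Notation~\ref{localnotation}, the intersection identities $\hat R \cap t\hat R_i = t\hat R$ and $\hat R_i \cap t\hat R_0 = t\hat R_i$ come from part (b), the equality $\bigcap_j t^j\hat R_0 = (0)$ comes from part (c) (since $\hat R_0$ is a DVR), and the mod-$t$ intersection $\bar R_1 \cap \bar R_2 = \bar R$ comes from part (e). The conclusion of the lemma gives $g_2 \in \hat R$, and hence $a = (t^j g_2)\cdot g_1$ is the desired factorization, with $t^j g_2 \in \hat R \subseteq F$ and $g_1 \in \hat R_1^\times$.

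There is no serious obstacle once Lemma~\ref{localproduct} and Lemma~\ref{generalintersect} are in hand; the only thing to check carefully is that the many completeness and intersection conditions codified in Lemma~\ref{notationlemma} fit together correctly so that both lemmas apply in this setting. In particular, the delicate point is verifying that the factor obtained from Lemma~\ref{localproduct}, which a priori only lives in $\hat R_2^\times$, can be upgraded to an element of $\hat R$ by intersecting with $\hat R_1$, which is precisely what Lemma~\ref{generalintersect} accomplishes.
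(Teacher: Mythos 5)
Your proposal is correct and follows essentially the same route as the paper's proof: reduce to the case where $a$ is a unit in $\hat R_0$ by factoring out a power of $t$, apply Lemma~\ref{localproduct}, and then use Lemma~\ref{generalintersect} (with the hypotheses supplied by Lemma~\ref{notationlemma}) to upgrade the $\hat R_2^\times$-factor to an element of $\hat R$. The only cosmetic difference is that you extract the power of $t$ by working with the valuation in $\hat R_0$ and then pull the resulting unit back into $\hat R_1$ via $t^j\hat R_0 \cap \hat R_1 = t^j\hat R_1$, whereas the paper factors $a = t^s a'$ directly inside $\hat R_1$ and then observes $a'$ is a unit of $\hat R_0$; both are valid.
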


\begin{proof}
We may assume $a \in \hat R_1$ is non-zero, and hence $a = t^s a'$ for some non-negative 
integer $s$ and some $a' \in \hat R_1$ that is not divisible by $t$.  Replacing $a$ by $a'$, 
we may assume that $a \not\in t\hat R_1$, and hence that $a$ is a unit in the discrete valuation 
ring $\hat R_0$.  So by Lemma~\ref{localproduct}, 
$a = bc$ for some units $b \in \hat R_1^\times$ and $c \in \hat R_2^\times$, and then $c = ab^{-1} \in \hat R_1$.  
But $\hat R_1 \cap \hat R_2 = \hat R$ by 
Lemma~\ref{generalintersect}, using in particular that 
$\bar R_1 \cap \bar R_2 = \bar R$.  Hence $c \in \hat R_1 \cap \hat R_2 = \hat R \subset F$.
\end{proof}

\begin{thm}\label{localintersection}
In the context of Notation~\ref{localnotation}, let 
$F, F_1, F_2, F_0$ be the fraction fields of $\hat R, \hat R_1, \hat R_2, \hat R_0$ respectively.
Then $F_1 \cap F_2 = F$ in $F_0$.
\end{thm}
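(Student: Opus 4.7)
The plan is to prove the nontrivial inclusion $F_1 \cap F_2 \subseteq F$, since the reverse inclusion is automatic from $\hat R \subseteq \hat R_1 \cap \hat R_2$. My strategy parallels the approach used in the global case (Theorem~\ref{globalintersection}): use a Weierstrass-preparation argument to write an arbitrary element of $F_1$ as a product of something in $F$ with a unit of $\hat R_1$, and then exploit the discrete valuation ring structures to push this unit into $F$ as well.

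Concretely, take $g \in F_1 \cap F_2$ and write $g = a/b$ with $a,b \in \hat R_1$, $b \ne 0$. Applying Proposition~\ref{localweierstrass} to $a$ and $b$ separately, I get $a = \alpha u$ and $b = \beta v$ with $\alpha,\beta \in F$ and $u,v \in \hat R_1^\times$. Setting $\gamma := \alpha/\beta \in F$ and $w := u/v \in \hat R_1^\times$, this gives $g = \gamma w$. Since $F \subseteq F_2$ and $g \in F_2$, the rearrangement $w = \gamma^{-1}g$ shows $w \in F_2$ as well, so $w \in \hat R_1 \cap F_2$. It therefore suffices to prove $w \in F$.

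For this last step, I would use that $\hat R_2$ is a complete discrete valuation ring with uniformizer $t$ and fraction field $F_2$ (Lemma~\ref{notationlemma}(c)), so some power $t^n w$ lies in $\hat R_2$. Since also $w \in \hat R_1$, we get $t^n w \in \hat R_1 \cap \hat R_2$. But $\hat R_1 \cap \hat R_2 = \hat R$ — this is precisely the instance of Lemma~\ref{generalintersect} already invoked in the proof of Proposition~\ref{localweierstrass}, with $M = \hat R$, $M_i = \hat R_i$, $M_0 = \hat R_0$; the torsion, completeness, and $t$-multiple conditions come from Lemma~\ref{notationlemma}(b) together with the $t$-adic completeness of $\hat R$, and the mod-$t$ intersection condition is Lemma~\ref{notationlemma}(e). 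Hence $t^n w \in \hat R$, so $w \in \hat R[t^{-1}] \subseteq F$, and thus $g = \gamma w \in F$.

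There is no real obstacle here: the argument is essentially an assembly of ingredients already proved. The conceptual content lies in the local Weierstrass preparation step, which reduces membership in $F_1$ to membership in the much smaller group $\hat R_1^\times$; after that, the DVR structure of $\hat R_2$ together with the intersection lemma handles the remainder automatically.
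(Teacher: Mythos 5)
Your proof is correct and follows essentially the same route as the paper's: apply Proposition~\ref{localweierstrass} to reduce to an element of $\hat R_1$, use the DVR structure of $\hat R_2$ to move into $\hat R_2$ after multiplying by a power of $t$, and then invoke the intersection $\hat R_1 \cap \hat R_2 = \hat R$ from Lemma~\ref{generalintersect}. The only cosmetic difference is that you apply Weierstrass preparation to both numerator and denominator, whereas the paper only needs it for the denominator (replacing $h$ by $fh$ to land in $\hat R_1$); this changes nothing substantive.
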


\begin{proof}
Let $h \in F_1 \cap F_2$. Write $h = a/b$ with $a,b \in \hat R_1$. By Proposition~\ref{localweierstrass}, $b=uf$ 
for some unit $u \in \hat R_1$ and some non-zero $f \in F$.  Thus $h=au^{-1}/f$; and replacing $h$ 
by $fh$, we may assume $h = au^{-1} \in \hat R_1$.  But $\hat R_2$ is a complete discrete 
valuation ring with uniformizer $t$ (Lemma~\ref{notationlemma}(c)); so after multiplying 
$h \in F_2$ by a non-negative power of $t$ we may assume $h \in \hat R_2 \cap \hat R_1$.  As noted in the above proof, Lemma~\ref{generalintersect} implies that $\hat R_1 \cap \hat R_2 = \hat R \subset F$ and hence $h \in F$.
\end{proof}

\subsection{Patching}

We begin with a local patching result, using the above factorization and intersection results.

\begin{thm}\label{localpatching}
In the context of Notation~\ref{localnotation}, let $F$ be the fraction field of $\hat R$ and let $F_i$ be the fraction field of $\hat R_i$ for $i=0,1,2$.   
Then the base change functor 
$$\Vect (F) \to \Vect (F_1) \times_{\Vect (F_0)} \Vect (F_2)$$
is an equivalence of categories.
\end{thm}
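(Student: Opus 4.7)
The plan is to imitate the proof of the global patching result (Theorem~\ref{globalpatching}) and reduce the equivalence of categories to the two standing conditions of Proposition~\ref{harb}: the intersection condition $F_1\cap F_2=F$ and the factorization condition for $\GL_n(F_0)$. Both have already been established in the preceding subsections of Section~\ref{localsection}, so the proof should essentially be a one-line invocation.

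More specifically, first I would verify that we are in the setting of Proposition~\ref{harb}: namely, the fields $F$, $F_1$, $F_2$, $F_0$ arise from the fraction fields of the rings in Notation~\ref{localnotation}, and $F_1, F_2$ sit inside $F_0$ (using Lemma~\ref{notationlemma}(\ref{notlema}), which gives $\hat R_i \subseteq \hat R_0$, and then passing to fraction fields). Next I would cite Theorem~\ref{localintersection} for the intersection condition $F_1\cap F_2 = F$, and Theorem~\ref{localfactorization} for the factorization condition on $\GL_n(F_0)$. Proposition~\ref{harb} then yields directly that $\beta:\Vect(F)\to\Vect(F_1)\times_{\Vect(F_0)}\Vect(F_2)$ is an equivalence of categories, with quasi-inverse given by the fibre product construction.

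There is no real obstacle here; the work was done upstream, in proving Theorems~\ref{localfactorization} and \ref{localintersection}. The only subtlety worth checking is that $F \subseteq F_1, F_2$ and $F_1, F_2 \subseteq F_0$, so that the base change functors making sense of the $2$-fibre product are indeed defined — but this is clear from Lemma~\ref{notationlemma}(\ref{notlema}).
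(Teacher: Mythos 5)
Your proposal is correct and matches the paper's proof exactly: the paper likewise invokes Theorem~\ref{localfactorization} and Theorem~\ref{localintersection} and concludes by Proposition~\ref{harb}. The extra sanity checks you note about the containments $F \le F_1, F_2 \le F_0$ are implicit in the paper but correctly resolved by Lemma~\ref{notationlemma}(\ref{notlema}).
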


\begin{proof}
This follows from Theorem~\ref{localfactorization} and Theorem~\ref{localintersection},
by Proposition~\ref{harb}.
\end{proof}

Combining this with the global patching result Theorem~\ref{globalpatching}, 
we obtain the following result on complete local/global patching:

\begin{thm}\label{clpatching}
Let $T$ be a complete discrete valuation ring with uniformizer $t$, 
and let $\hat X$ be a smooth connected
projective $T$-curve with closed fibre $X$.
Let $\hat R_Q$ be the completion of the local ring of $\hat X$ at a closed point $Q$; let
$\hat R_Q^\circ$ be the $t$-adic completion 
of the localization of $\hat R_Q$ at the height one prime $t\hat R_Q$; and let $F_Q$, $F_Q^\circ$ be the fraction fields of $\hat R_Q$, $\hat R_Q^\circ$.  Let $U$ be a subset of $X$ that contains $Q$, let
$U'  = U \smallsetminus \{Q\}$, and let $F_U$ and $F_{U'}$ 
be as in Notation~\ref{globalnotation}.
Then the base change functor 
$$\Vect (F_U) \to \Vect (F_Q) \times_{\Vect (F_Q^\circ)} \Vect (F_{U'})$$ is an equivalence of categories.
\end{thm}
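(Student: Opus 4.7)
The plan is to deduce this from the global patching result Theorem~\ref{globalpatching}, applied to the decomposition $U = \{Q\} \cup U'$, together with the local patching result Theorem~\ref{localpatching}, applied at $Q$ to replace $F_{\{Q\}}$ by data from the complete local ring $\hat R_Q$. Associativity of the $2$-fibre product then assembles the outputs into the asserted equivalence.

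First, I would invoke Theorem~\ref{globalpatching} with $U_1 = \{Q\}$ and $U_2 = U'$, noting that $U_1 \cap U_2 = \varnothing$ and $U_1 \cup U_2 = U$. This yields an equivalence
$$\Vect(F_U) \;\stackrel{\sim}{\longrightarrow}\; \Vect(F_{\{Q\}}) \times_{\Vect(F_\varnothing)} \Vect(F_{U'}).$$
Next, I would apply Theorem~\ref{localpatching} in the setup of Notation~\ref{localnotation}, taking $\hat R := \hat R_{\{Q\}}$, the uniformizer $t$ of $T$, and an element $f \in \hat R_{\{Q\}}$ lifting a uniformizer of $\O_{X,Q}$. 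The main obstacle is to verify that the rings $\hat R_1, \hat R_2, \hat R_0$ produced by that notation coincide with $\hat R_Q, \hat R_\varnothing, \hat R_Q^\circ$, respectively. For $\hat R_1$, this follows from the standard fact that completing $R_{\{Q\}}$ first at $(t)$ and then at its maximal ideal reproduces $\hat R_Q$, since $(t)$ is contained in the maximal ideal. For $\hat R_2$ and $\hat R_0$, both sides are complete discrete valuation rings with uniformizer $t$, and the residue fields modulo $t$ match: the function field of $X$ in the case of $\hat R_2 = \hat R_\varnothing$, and the fraction field of $\hat\O_{X,Q}$ in the case of $\hat R_0 = \hat R_Q^\circ$. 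With these identifications in hand, Theorem~\ref{localpatching} yields
$$\Vect(F_{\{Q\}}) \;\stackrel{\sim}{\longrightarrow}\; \Vect(F_Q) \times_{\Vect(F_Q^\circ)} \Vect(F_\varnothing).$$

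Substituting the second equivalence into the first gives
$$\Vect(F_U) \;\stackrel{\sim}{\longrightarrow}\; \bigl(\Vect(F_Q) \times_{\Vect(F_Q^\circ)} \Vect(F_\varnothing)\bigr) \times_{\Vect(F_\varnothing)} \Vect(F_{U'}).$$
Via the inclusions $F_{U'} \subseteq F_\varnothing \subseteq F_Q^\circ$, a formal associativity argument for iterated $2$-fibre products identifies this with $\Vect(F_Q) \times_{\Vect(F_Q^\circ)} \Vect(F_{U'})$: in a compatible triple $(V_Q, V_\varnothing, V_{U'})$, the middle factor $V_\varnothing$ is forced up to canonical isomorphism to equal $V_{U'} \otimes_{F_{U'}} F_\varnothing$, so the essential datum reduces to $V_Q$, $V_{U'}$, and a single isomorphism between them after base change to $F_Q^\circ$. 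This would complete the proof.
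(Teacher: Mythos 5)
Your proof takes essentially the same route as the paper: decompose $U = \{Q\} \cup U'$, apply Theorem~\ref{globalpatching} with $U_1 = \{Q\}$, $U_2 = U'$, apply Theorem~\ref{localpatching} with $\hat R := \hat R_{\{Q\}}$ and the same $f,t$, and compose via associativity of $2$-fibre products. One small caveat on rigor: for the identifications $\hat R_2 = \hat R_\varnothing$ and $\hat R_0 = \hat R_Q^\circ$, observing that both sides are complete DVRs with uniformizer $t$ and matching residue fields is not by itself a proof (abstract Cohen-type isomorphism is not the same as a canonical identification); one must exhibit a natural map inducing the isomorphism mod $t$. The paper instead uses a sandwich argument: $R_\varnothing \subseteq \hat R_{\{Q\}}[f^{-1}]_{(t)} \subseteq \hat R_\varnothing$ forces all three to have the same $t$-adic completion, and likewise $(\hat R_Q)_{(t)} \subseteq \hat R_Q[f^{-1}]_{(t)} \subseteq \hat R_Q^\circ$ for the second identification, which yields the canonical identification more directly.
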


\begin{proof}
As in Notation~\ref{globalnotation}, we let $\hat R_\varnothing$ be the $t$-adic completion of the local ring of $\hat X$ at the generic point 
of $X$ and let $F_\varnothing$ be the fraction field of $\hat R_\varnothing$.  
Here $\hat R_Q$ denotes the completion of the local ring of $\hat X$ at $Q$ with respect to its maximal ideal, whereas
$\hat R_{\{Q\}}$ denotes the $t$-adic completion of this same local ring.  Also, $F_Q$, $F_{\{Q\}}$ denote the fraction fields of $\hat R_Q$, $\hat R_{\{Q\}}$.  

Since $\hat X$ is a smooth projective $T$-curve, $\hat R_{\{Q\}}$ is a two-dimensional regular local domain, with maximal ideal $\m_Q$, and with $\m_Q$-adic completion $\hat R_Q$.  Choose a lift $f \in \hat R_{\{Q\}}$ of a uniformizer $\bar f$ of $Q$ on the closed fibre $X$; thus $f,t$ form a system of local parameters for $\hat X$ at $Q$.  The localization $\hat R_{\{Q\}}[f^{-1}]_{(t)}$ contains the local ring $R_\varnothing$ of $\hat X$ at the generic point of $X$ and is contained in the $t$-adic completion $\hat R_\varnothing$ of $R_\varnothing$; hence $\hat R_\varnothing$ is the $t$-adic completion of $\hat R_{\{Q\}}[f^{-1}]_{(t)}$, or equivalently of $\hat R_{\{Q\}}[f^{-1}]$.  Similarly, the localization $\hat R_Q[f^{-1}]_{(t)}$ contains the localization $(\hat R_Q)_{(t)}$, and is contained in the $t$-adic completion $\hat R_Q^\circ$ of that ring; hence $\hat R_Q^\circ$ is in fact the $t$-adic completion of $\hat R_Q[f^{-1}]_{(t)}$, or equivalently of $\hat R_Q[f^{-1}]$.  Thus the four rings $\hat R_{\{Q\}}$, $\hat R_Q$, $\hat R_\varnothing$, $\hat R_Q^\circ$ satisfy the assumptions of Notation~\ref{localnotation} for the rings
$\hat R$, $\hat R_1$, $\hat R_2$, $\hat R_0$ there.  So by
Theorem~\ref{localpatching}, the base change functor
$$\Vect (F_{\{Q\}}) \to \Vect (F_Q) \times_{\Vect (F_Q^\circ)} \Vect (F_\varnothing)
$$
is an equivalence of categories.  
By Theorem~\ref{globalpatching}, the base change functor
$$\Vect (F_U) \to \Vect (F_{\{Q\}}) \times_{\Vect (F_\varnothing)} \Vect (F_{U'})
$$
is also an equivalence.  Hence the composition
\begin{align*}
\Vect (F_U) &\to \Vect (F_{\{Q\}}) \times_{\Vect (F_\varnothing)} \Vect (F_{U'})\cr 
&\to \left(\Vect (F_Q) \times_{\Vect (F_Q^\circ)} \Vect (F_\varnothing)\right) \times_{\Vect
  (F_\varnothing)} \Vect (F_{U'})\cr
&\to \Vect (F_Q) \times_{\Vect (F_Q^\circ)} \Vect (F_{U'}),
\end{align*}
given by base change, is an equivalence of categories.
\end{proof}

To illustrate the above result, let $T=k[[t]]$, let $\hat X$ be the projective $x$-line over $T$, let $Q$ be the point $x=t=0$, let $U = \P^1_k$, and let $U' = U \smallsetminus \{Q\}$.  The ring $R_{U'}$ 
contains $k[[t]][x^{-1}]$, the ring of regular functions on an affine open subset $\tilde U'$ of $\P^1_T$; and it is equal to the intersection of the localizations of $k[[t]][x^{-1}]$ at the maximal ideals corresponding to the points of $U'$ (i.e.\ the closed points of the closed fibre of $\tilde U'$).  So $R_{U'}$
is the localization $S^{-1}(k[[t]][x^{-1}])$, where $S$ is the multiplicative set of elements that lie in none of these maximal ideals, or equivalently
are units modulo $t$ (and hence modulo $t^n$ for all $n$).  Thus 
the inclusion $k[[t]][x^{-1}] \hookrightarrow 
R_{U'}$ becomes an isomorphism modulo $t^n$ for all $n$.  Hence $k[[t]][x^{-1}]$ is $t$-adically dense in $R_{U'}$, and these two rings have the same $t$-adic completion; i.e.\ $k[x^{-1}][[t]] = \hat R_{U'}$.  The local ring of $\hat X$ at $Q$ is $k[[t]][x]_{(x,t)}$, whose $t$-adic completion $\hat R_{\{Q\}}$ is $k[x]_{(x)}[[t]]$, this being the inverse limit of the mod $t^n$-reductions $k[t,x]_{(x)}/(t^n)$.  The $(x,t)$-adic completion of this same local ring is 
$\hat R_Q = k[[x,t]]$; while 
$\hat R_\varnothing = k(x)[[t]]$ and $\hat R_Q^\circ = k((x))[[t]]$, these being the $t$-adic completions of $k[[t]][x]_{(x)}$ and $k[[x,t]][x^{-1}]$.  The fields $F_{U'}$, $F_{\{Q\}}$, $F_Q$, $F_\varnothing$, and $F_Q^\circ$ are the respective fraction fields of the above complete rings.  The function field $F_U$ of $\hat X$ is the fraction field $k((t))(x)$ of $k[[t]][x]$, the ring of functions on the dense open subset $\A^1_T$.

The next result is a generalization of Theorem~\ref{clpatching} that allows more patches.  

\begin{thm}\label{localpatchingseveral}
Let $T$ be a complete discrete valuation ring with uniformizer $t$, 
and let $\hat X$ be a smooth connected 
projective $T$-curve with closed fibre $X$.
Let $Q_1,\dots,Q_r$ be distinct
closed points on $\hat X$.  For each $i=1,\dots,r$, let $\hat R_i$ be the complete local ring of $\hat X$ at $Q_i$; let $\hat R_i^\circ$ be the $t$-adic completion 
of the localization of $\hat R_i$ at the height one prime $t\hat R_i$; and let $F_i$, $F_i^\circ$ be the fraction fields of $\hat R_i$, $\hat R_i^\circ$.  Let $U$ be a subset of $X$ that contains $S = \{Q_1,\dots,Q_r\}$, let
$U' = U \smallsetminus S$, and let $F_U$ and $F_{U'}$ 
be as in Notation~\ref{globalnotation}.
Then the base change functor 
$$\Vect (F_U) \to \prod_{i=1}^r \Vect (F_i) \times_{\prod_{i=1}^r \Vect (F_i^\circ)} \Vect (F_{U'})$$ is an equivalence of categories.
\end{thm}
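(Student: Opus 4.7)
The natural plan is induction on $r$, with the base case $r=1$ being exactly Theorem~\ref{clpatching}. So I assume the result for $r-1$ points and carry out the inductive step.

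For the inductive step, set $S' = \{Q_1,\dots,Q_{r-1}\}$ and $U'' = U \smallsetminus \{Q_r\}$. Then $U''$ contains $S'$ and $U'' \smallsetminus S' = U'$. Theorem~\ref{clpatching}, applied at the single point $Q_r \in U$ with ambient set $U$ and complement $U''$, gives an equivalence of categories
$$\Vect(F_U)\ \iso\ \Vect(F_r) \times_{\Vect(F_r^\circ)} \Vect(F_{U''}).$$
The inductive hypothesis, applied to the set $U''$ and the points $Q_1,\dots,Q_{r-1}$, gives an equivalence
$$\Vect(F_{U''})\ \iso\ \prod_{i=1}^{r-1}\Vect(F_i) \times_{\prod_{i=1}^{r-1}\Vect(F_i^\circ)} \Vect(F_{U'}).$$

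The plan is then to compose these two equivalences and show that the result is the base change functor stated in the theorem. Substituting the second equivalence into the second factor of the first yields
$$\Vect(F_U)\ \iso\ \Vect(F_r) \times_{\Vect(F_r^\circ)} \left(\prod_{i=1}^{r-1}\Vect(F_i) \times_{\prod_{i=1}^{r-1}\Vect(F_i^\circ)} \Vect(F_{U'})\right),$$
and it remains to identify this iterated $2$-fibre product with $\prod_{i=1}^{r}\Vect(F_i) \times_{\prod_{i=1}^{r}\Vect(F_i^\circ)} \Vect(F_{U'})$. This is a purely formal categorical fact: the first factor $\Vect(F_r)$ only interacts with the base $\Vect(F_{U''})$ through $\Vect(F_r^\circ)$, and the intermediate fibre product only records compatibility data over the $\Vect(F_i^\circ)$ for $i<r$ and over $\Vect(F_{U'})$; rearranging these compatibilities gives an object consisting of an $F_i$-vector space $V_i$ for each $i=1,\dots,r$, a vector space $V'$ over $F_{U'}$, and isomorphisms over each $F_i^\circ$, which is exactly the data parameterized by the right-hand side of the theorem. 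The resulting composite is the base change functor because each constituent equivalence is.

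The main obstacle is this last bookkeeping step, i.e.\ verifying that the iterated fibre product reorganizes into the flat product over $\prod_{i=1}^r \Vect(F_i^\circ)$ and that, under this identification, the composed functor agrees with simultaneous base change along $F_U \to F_i$ ($i=1,\dots,r$) and $F_U\to F_{U'}$. Once this categorical reshuffling is made explicit, the theorem follows by composition of equivalences. (Alternatively, one could bypass induction and argue directly, using Proposition~\ref{harb} together with factorization and intersection results obtained by iterating Theorems~\ref{globalfactgeneral}, \ref{globalintersection}, \ref{localfactorization}, \ref{localintersection} over the points of $S$, but the inductive route is cleaner since it reuses Theorem~\ref{clpatching} as a black box.)
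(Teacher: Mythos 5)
Your proof is correct and takes essentially the same approach as the paper: both argue by induction on $r$, using Theorem~\ref{clpatching} to peel off a single point and the inductive hypothesis on the remaining $r-1$ points, then identifying the resulting iterated $2$-fibre product with the flat product $\prod_{i=1}^r \Vect(F_i) \times_{\prod_{i=1}^r \Vect(F_i^\circ)} \Vect(F_{U'})$. The only cosmetic difference is the order of removal (you peel off $Q_r$ first while the paper accumulates from $Q_1$ upward), and both proofs treat the final categorical reorganization as a routine identification rather than spelling it out.
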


\begin{proof}
This follows by induction from Theorem~\ref{clpatching}, using the identification of
$$\prod_{i=1}^{j-1} \Vect(F_i) \times_{\prod_{i=1}^{j-1} \Vect (F_i^\circ)} \bigl(
  \Vect(F_j) \times_{\Vect(F_j^\circ)} \Vect(F_{U \smallsetminus  \{Q_1,\dots,Q_j\}})\bigr)$$
with
$$\prod_{i=1}^j \Vect(F_i) \times_{\prod_{i=1}^j \Vect (F_i^\circ)} \Vect(F_{U \smallsetminus  \{Q_1,\dots,Q_j\}}).$$
\end{proof}

In the terminology of Section~\ref{setup}, we may rephrase the above result in terms of patching problems.  Namely, consider the partially ordered set $I=\{1,\dots,r,1',\dots,r',U'\}$, where $i \succ i'$ for each $i$, and where $U' \succ i'$ for all $i$.  Set $F_{i'} = F_i^\circ$ for each $i$, and consider the corresponding finite inverse system of fields $\F=\{F_i,F_{i'},F_{U'}\}$ indexed by $I$.
Then Theorem~\ref{localpatchingseveral} asserts that the base change functor $\Vect(F_U) \to \PP(\F)$ is an equivalence of categories.

\begin{remark} \label{analogy}
The above theorem can be regarded as analogous to a special case of 
Theorem~\ref{globalpatchingseveral} --- viz.\ where each of the sets $U_i$ consists of a single point, except for one $U_i$ which is disjoint from the others.  Both results then make a patching assertion in the context of one arbitrary set and a finite collection of points not in that set.  The main difference between the two results is that in the above special case of Theorem~\ref{globalpatchingseveral}, the local patches correspond to the fraction fields of the $t$-adic completions of the local rings at the respective points $Q_i$; whereas Theorem~\ref{localpatchingseveral} uses the fraction fields of the $\m_{Q_i}$-adic completions of the local rings at those points.  
In the special case of Theorem~\ref{globalpatchingseveral}, the 
``overlap'' fields associated to the pairwise intersections $U_i \cap U_j$ are each just the fraction field $F_\varnothing$ of the $t$-adic completion of the local ring at the generic point of $X$; whereas in 
Theorem~\ref{localpatchingseveral}, the ``overlap'' fields $F_i^\circ$ are different from each other (and are larger than $F_\varnothing$).
So Theorem~\ref{localpatchingseveral} 
can no longer be phrased as a fibre product over a single base as in Theorem~\ref{globalpatchingseveral}.  
\end{remark}

\begin{prop} \label{localinverse}
In Theorems~\ref{localpatching},  \ref{clpatching}, and \ref{localpatchingseveral},
the inverse of the base change functor (up to isomorphism) is given by taking the inverse limit of the vector spaces on the patches.  In Theorems~\ref{localpatching} and \ref{clpatching}, this inverse limit is given by taking the intersection of vector spaces.
\end{prop}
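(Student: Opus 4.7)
The plan is to apply Proposition~\ref{patchprob} in each of the three theorems; once one verifies that the base field is the inverse limit of the corresponding field system, this proposition immediately identifies any solution with the inverse limit of the patches. The intersection interpretation in Theorems~\ref{localpatching} and~\ref{clpatching} then follows from the standard identification of a two-term fibre product $V_1 \times_{V_0} V_2$ with the intersection $V_1 \cap V_2$ inside $V_0$, as described just after Proposition~\ref{harb}.

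For Theorem~\ref{localpatching}, the inverse limit of $\{F_0,F_1,F_2\}$ is $F_1\cap F_2$, which equals $F$ inside $F_0$ by Theorem~\ref{localintersection}; Proposition~\ref{patchprob} then gives $V = V_1\times_{V_0} V_2 = V_1\cap V_2$. For Theorem~\ref{clpatching}, the main point is to verify $F_Q\cap F_{U'}=F_U$ inside $F_Q^\circ$. I would do this by chaining two intersection statements through the intermediate field $F_\varnothing$ of Notation~\ref{globalnotation}. First, applying Theorem~\ref{localpatching} and Proposition~\ref{harb} with $\hat R_1=\hat R_Q$ and $\hat R_2=\hat R_\varnothing$ (the setup used in the proof of Theorem~\ref{clpatching}) yields $F_Q\cap F_\varnothing = F_{\{Q\}}$ inside $F_Q^\circ$. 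Second, Theorem~\ref{globalintersection} with $U_1=\{Q\}$ and $U_2=U'$ yields $F_{\{Q\}}\cap F_{U'} = F_U$ inside $F_\varnothing$. Since $F_{U'}\le F_\varnothing\le F_Q^\circ$, any $x\in F_Q\cap F_{U'}$ lies in $F_Q\cap F_\varnothing = F_{\{Q\}}$, and hence in $F_{\{Q\}}\cap F_{U'} = F_U$. Proposition~\ref{patchprob} now gives $V = V_Q\times_{V_Q^\circ} V_{U'} = V_Q\cap V_{U'}$.

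For Theorem~\ref{localpatchingseveral}, I would induct on $r$, mirroring the inductive proof of that theorem: Theorem~\ref{clpatching} peels off the patch at $Q_r$, and the inductive hypothesis handles the remainder. The full solution then appears as an iterated fibre product, which by standard properties of iterated fibre products agrees with the inverse limit $\lim_\leftarrow V_i$ over the full system. Alternatively, one can first verify $F_U = \lim_\leftarrow \F$ by the same chaining argument used in the previous paragraph, and then apply Proposition~\ref{patchprob} once.

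The hard part is the intersection equality inside $F_Q^\circ$ used in Theorem~\ref{clpatching}: neither Theorem~\ref{localintersection} nor Theorem~\ref{globalintersection} applies to it directly, since the fields sit in different ambient rings, but the two combine through the intermediate field $F_\varnothing$. In Theorem~\ref{localpatchingseveral} there is no single ambient field containing all of the $F_i$ and $F_{U'}$, which is precisely why the second sentence of the proposition is restricted to the two-patch theorems.
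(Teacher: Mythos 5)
Your proposal is correct and follows essentially the same route as the paper: the paper also reduces to verifying the intersection condition ($F_1\cap F_2=F$ via Theorem~\ref{localintersection} for Theorem~\ref{localpatching}, and $F_Q\cap F_{U'}=F_U$ by combining Theorems~\ref{localintersection} and~\ref{globalintersection} for Theorem~\ref{clpatching}), and then handles Theorem~\ref{localpatchingseveral} by showing inductively that $F_U$ is the inverse limit of the field system and invoking Proposition~\ref{patchprob}. Your explicit chaining through the intermediate field $F_\varnothing$ just spells out the step the paper summarizes as ``combining,'' and is the right way to justify it.
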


\begin{proof}
By Corollary~\ref{dimcriterion}, the assertion for Theorems~\ref{localpatching} and \ref{clpatching}
follows from verifying the intersection condition of Section~\ref{setup} concerning fields (i.e.\ that $F_1 \cap F_2 = F$ in Theorem~\ref{localpatching} and that $F_Q \cap F_{U'} = F_U$ in Theorem~\ref{clpatching}).  That condition follows for Theorem~\ref{localpatching} by Theorem~\ref{localintersection}; and for Theorem~\ref{clpatching} by combining that in turn with Theorem~\ref{globalintersection}.

To prove the result in the case of Theorem~\ref{localpatchingseveral}, we rephrase that theorem as asserting that $\Vect(F_U) \to \PP(\F)$ is an equivalence of categories, where $\F$ is as defined in the paragraph following the proof above.  By Theorem~\ref{localintersection}, $F_Q \cap F_{U'} = F_U$ in the situation of Theorem~\ref{clpatching}.  So by induction on $r$, it follows that $F_U$ is the inverse limit of the fields in $\F$.  Hence Proposition~\ref{patchprob} asserts that if a patching problem $\mathcal{V}=\{V_i,V_{i'},V_{U'}\}$ in $\F$ is induced (up to isomorphism) by a finite dimensional $F_U$-vector space $V$, then $V$ is isomorphic to the inverse limit of  $\mathcal{V}$.  Such a $V$ exists since the functor in Theorem~\ref{localpatchingseveral} is an equivalence of categories; hence the assertion follows. 
\end{proof}

\section{Allowing Singularities} \label{singular}
In view of later applications, it is desirable to have a version of
Theorem~\ref{localpatchingseveral} that can be applied to a singular curve.
Let $T$ be a complete discrete valuation ring with uniformizer $t$.
In order to perform patching in the case of normal curves 
$\hat X \to T$ that are not smooth, we introduce some terminology that was used
in a related context in \cite{harbaterstevenson}, Section~1.  

Let $\hat X$ be a connected projective normal $T$-curve, with closed fibre $X$.  Consider a non-empty finite set
$S \subset X$ that contains every point where distinct irreducible components of $X$ meet.  Thus each connected component of $X \smallsetminus S$ is contained in an irreducible component of $X$, and moreover is an affine open subset of that component, since each irreducible component of $X$ contains at least one point of $S$ (by connectivity of $X$ and the fact that $X \ne \varnothing$).  For any non-empty
irreducible affine Zariski open subset $U \subseteq X\smallsetminus S$, we consider as before the ring
$R_U$ of  rational functions on $\hat X$ that are regular at the points of $U$ (and hence also at the generic point of the component of $X$ containing $U$); and
the fraction field  $F_U$ of the $t$-adic completion $\hat R_U$ of $R_U$ (which is a domain by the irreducibility of $U$).  For
each point $P \in S$,
the complete local ring $\hat R_P$ of $\hat X$ at $P$ is a domain, say
with fraction field $F_P$.  Each height one prime ideal $\wp$ of
$\hat R_P$  that contains $t$ determines a {\it branch} of $X$ at $P$ (i.e.\ an irreducible component of the pullback of $X$ to $\Spec \hat R_P$); 
and we
let $\hat R_{\wp}$  denote the complete local ring of $\hat R_P$ at $\wp$,
with fraction field  $F_{\wp}$.  Since $t \in \wp$, the contraction of $\wp \subset \hat R_P$ to the local ring $\O_{\hat X, P}$ defines an irreducible component of $\Spec \O_{X, P}$; hence an irreducible component of $X$ containing $P$.
This in turn is the closure of a unique connected component $U$ of $X\smallsetminus S$; and we
say that $\wp$  {\bf lies on} $U$.  (Note that several branches of $X$ at $P$
may  lie on the same $U$, viz.\ if the closure of $U$ is not unibranched at $P$.)  

In this situation, we obtain a finite inverse system consisting of fields $F_U, F_P, F_\wp$.  More precisely, let $I_1$ be the set of irreducible (or equivalently, connected) components $U$ of $X\smallsetminus S$; let $I_2=S$; let $I_0$ be the set of branches $\wp$ of $X$ at points $P \in S$; and let $I = I_1 \cup I_2 \cup I_0$.  Give $I$ the structure of a partially ordered set by setting $U \succ \wp$ if $\wp$ lies on $U$, and setting $P \succ \wp$ if $\wp$ is a branch of $X$ at $P$.  This defines the asserted finite inverse system $\F = \F_{\hat X, S} = \{F_i\}_{i \in I}$ consisting of the fields $F_U, F_P, F_\wp$ under the natural inclusions $F_U \to F_\wp$ and $F_P \to F_\wp$.

Theorem~\ref{singularpatching} below, which states a patching result for singular curves, will be proven by relating a given singular curve to an auxiliary smooth curve.  That theorem and the results preceding it will be in the following situation:

\begin{hypothesis} \label{singpatchhyp}  We make the following assumptions: 
\begin{itemize}
\item $T$ is a complete discrete valuation ring with uniformizer $t$.
\item $f:\hat X \to \hat X'$ is a finite morphism of connected projective normal $T$-curves with function fields $F,F'$, and closed fibres $X, X'$, such that $\hat X'$ is smooth over $T$.
\item $S' \ne \varnothing$ is a finite set of closed points of $X'$, say with complement $U' = X' - S'$, such that 
$S := f^{-1}(S') \subset X$ contains the points where distinct irreducible components of $X$ meet.
\end{itemize}
\end{hypothesis}

\begin{lemma} \label{singisos}
Under Hypothesis~\ref{singpatchhyp} and the above notation, let 
$P' \in S'$ and let $\wp'$ be the branch of $X'$ at $P'$.  
\renewcommand{\theenumi}{\alph{enumi}}
\begin{enumerate}
\item The natural maps 
$$F \otimes_{F'} F_{U'} \to \prod F_U, \ \ \
F \otimes_{F'} F_{P'} \to \prod F_P, \ \ \
F \otimes_{F'} F_{\wp'} \to \prod F_\wp$$
are isomorphisms of $F$-algebras, where the products respectively range over the connected components $U$ of $X \smallsetminus S$, the points $P$ of $S$ lying over $P'$, and the branches $\wp$ of $X$ over $\wp'$.  \label{singisos1}
\item The natural inclusions $F_{U'} \to F_{\wp'}$ and $F_{P'} \to F_{\wp'}$ are compatible with the natural inclusions
$\prod F_U \to \prod F_\wp$ and $\prod F_P \to \prod F_\wp$, where $U$ and $P$ range as above and $\wp$ ranges over the branches of $X$ at points of $S$.  \label{singisos2}
\end{enumerate}
\end{lemma}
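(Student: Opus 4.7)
The plan is to treat the three isomorphisms in part (a) by a common strategy: use the finiteness of $f:\hat X \to \hat X'$ to realize the target as the total ring of fractions of a finite extension of $\hat R_{U'}$ (resp.\ $\hat R_{P'}$, $\hat R_{\wp'}$), and then extract the product decomposition from a disjointness statement on the closed fibre. Part (b) will follow essentially formally from the construction.

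First I would handle the map $F \otimes_{F'} F_{U'} \to \prod_U F_U$. The defining property of $S$ says that $f^{-1}(U') = X \smallsetminus S$ is the disjoint union of the components $U$. Because $\hat X$ is normal, the ring $R_{f^{-1}(U')} \subset F$ coincides with the integral closure of $R_{U'}$ in $F$, and finiteness of $f$ makes it a finite $R_{U'}$-module; hence tensoring over $R_{U'}$ with $\hat R_{U'}$ computes its $t$-adic completion, which I denote $\widehat{R_{f^{-1}(U')}}$. The mod $t$ reduction of this ring is $\mathcal O(X \smallsetminus S) = \prod_U \mathcal O(U)$, and the orthogonal idempotents cutting out those factors lift uniquely to the $t$-adically complete ring. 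This yields a decomposition of $\widehat{R_{f^{-1}(U')}}$ into $t$-adically complete factors, each of which I identify with the corresponding $\hat R_U$ via its universal property as the $t$-adic completion of $R_U$. Inverting $t$ then gives the asserted $F$-algebra isomorphism.

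The second and third isomorphisms follow the same pattern in a strictly local setting that is in fact simpler. The ring $\hat R_{P'}$ is the $\mathfrak m_{P'}$-adic completion of the local ring of $\hat X'$ at $P'$; finiteness of $f$ implies that its tensor product (over the local ring of $\hat X'$ at $P'$) with the semi-local ring of $\hat X$ at the preimage $f^{-1}(P')$ is already complete at its Jacobson radical, so the classical fact that such a semi-local completion splits as the product of its complete localizations yields $\prod_P \hat R_P$, and passing to fraction fields gives the isomorphism. The case of $F_{\wp'}$ is the same once one localizes further at the height-one primes lying over $\wp'$, which correspond exactly to the branches $\wp$ of $X$ over $\wp'$. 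The main obstacle in part (a) is thus the first case, where the completion is $t$-adic rather than maximal-adic, so that one must explicitly invoke Hensel-type lifting of idempotents and the identification of $\widehat{R_{f^{-1}(U')}}$ with $\hat R_{U'} \otimes_{R_{U'}} R_{f^{-1}(U')}$.

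For part (b), both $F_{U'}$ and $F_{P'}$ map into $F_{\wp'}$ by further localization or completion at the prime $\wp'$, and carrying out the same construction on $\hat X$ produces a compatible product decomposition in which the factor $F_U$ of $\prod_U F_U$ maps into exactly those $F_\wp$ with $\wp$ lying on $U$, and the factor $F_P$ of $\prod_P F_P$ maps into those $F_\wp$ arising as branches at $P$. Tracing this through the isomorphisms of part (a) gives the asserted commutativity, completing the proof.
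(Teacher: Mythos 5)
Your proposal is correct and follows essentially the same strategy as the paper: both arguments reduce the question to showing that the $t$-adic (resp.\ maximal-ideal-adic) completion of the finite ring extension decomposes as a product of the $\hat R_U$ (resp.\ $\hat R_P$, $\hat R_\wp$), using the fact that completion of a finite module over a Noetherian ring is computed by tensoring with the completed base, and then pass to total rings of fractions. The only notable differences are cosmetic — the paper obtains the decomposition of the $t$-adic completion via the Chinese Remainder Theorem applied to $\tilde R/t^n\tilde R$ and an inverse limit, whereas you lift orthogonal idempotents from the closed fibre; and your last step should be tensoring with $F_{U'}$ over $\hat R_{U'}$ (i.e.\ forming the total ring of fractions, as you correctly say in your preamble) rather than literally ``inverting~$t$,'' since $F_{U'}\neq\hat R_{U'}[t^{-1}]$ in general.
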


\begin{proof} (\ref{singisos1})
Choose a Zariski affine open subset $\Spec \tilde R'$ of $\hat X'$ that meets $X'$ in $U'$, and let $\Spec \tilde R$ be its inverse image in $\hat X$.  The fraction fields of $\tilde R, \tilde R'$ are respectively the function fields $F, F'$ of $\hat X, \hat X'$, since $\Spec \tilde R\subset \hat X$, $\Spec \tilde R\subset \hat X'$ are Zariski open dense subsets. 

At every closed point of $U'$, the rings $\tilde R'$ and $R'_{U'}$ have the same localization and hence the same completion; so $R'_{U'}$ and its subring $\tilde R'$ have the same $t$-adic completion $\hat R_{U'}$.  By the hypothesis on $S$, the inverse image 
$f^{-1}(U') = X \smallsetminus S \subset X$
is the disjoint union of its irreducible components $U$.  Thus the  
corresponding ideals $I_U$ of $\tilde R$ are pairwise relatively prime; hence $t\tilde R = \prod I_U$ and more generally $t^n\tilde R = \prod I_U^n$ for $n \ge 1$.  By the Chinese Remainder Theorem, $\tilde R/t^n\tilde R$ is isomorphic to the product $\prod_U \tilde R/I_U^n = \prod_U R_U/I_U^nR_U= \prod_U R_U/t^nR_U$; and taking inverse limits shows that the $t$-adic completion of $\tilde R$ is  
$\prod \hat R_U$.  By definition, the
fraction fields of $\hat R_U, \hat R_{U'}$ are respectively $F_U, F_{U'}$, where as above $U$ ranges 
over the connected components of $X \smallsetminus S$.

The natural $\tilde R$-algebra homomorphism $\tilde R \otimes_{\tilde R'} \hat R_{U'} \to \prod \hat R_U$ is bijective, by \cite{bourbaki}, Theorem~3(ii) in \S3.4 of Chapter~III, and thus is an isomorphism.  Hence so is $\tilde R \otimes_{\tilde R'} F_{U'} = 
\tilde R \otimes_{\tilde R'} \hat R_{U'} \otimes_{\hat R_{U'}} F_{U'}
\to \prod \hat R_U \otimes_{\hat R_{U'}} F_{U'}$. 
Now $\tilde R$ is finite over $\tilde R'$, and 
$\hat R_U$ is finite over $\hat R_{U'}$;
so $F = \tilde R \otimes_{\tilde R'} F'$
and $F_U = \hat R_U \otimes_{\hat R_{U'}} F_{U'}$.  Hence $\prod F_U = \prod \hat R_U \otimes_{\hat R_{U'}} F_{U'}$ as $F$-algebras.  
Thus the natural map $F \otimes_{F'} F_{U'} 
= \tilde R \otimes_{\tilde R'} F' \otimes_{F'}  F_{U'}
= \tilde R \otimes_{\tilde R'} F_{U'} 
\to \prod F_U$ is an $F$-algebra isomorphism.  
This proves that the first map is an isomorphism.  The proofs for the other two maps are similar.

(\ref{singisos2}) This follows from the fact that each of these maps is given by base change.
\end{proof}

In the situation of Hypothesis~\ref{singpatchhyp}, 
consider the diagonal inclusion map
$F \to \prod F_U \times \prod F_P$, where $U$ ranges over the components of $X \smallsetminus S$ and $P$ ranges over $S$.  For each such $U$ consider the diagonal inclusion
$\iota_U:F_U \to \prod F_\wp$, where 
$\wp$ ranges over branches of $X$ lying on $U$; and for each $P$ consider the diagonal inclusion
$\iota_P:F_P \to \prod F_\wp$, where
$\wp$ ranges over branches of $X$ at $P$.  
Consider the sum $\prod F_U \times \prod F_P \to  \prod F_{\wp}$ of the maps $\iota_U$ and $-\iota_P$ on the respective components; here $\wp$ ranges over all the branches at points of $S$.

\begin{prop} \label{exseqsing}
Under Hypothesis~\ref{singpatchhyp} and the above notation,
the sequence
$$0 \to F \to \prod F_U \times \prod F_P  \to  \prod F_{\wp}$$
of $F$-vector spaces is exact.  Equivalently, $F$ is the inverse limit of the inverse system $\F_{\hat X,S}$ of $F$-algebras consisting of the fields $F_U, F_P, F_\wp$ with the natural inclusions.
\end{prop}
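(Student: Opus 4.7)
The plan is to reduce the assertion to the analogous statement for the smooth auxiliary curve $\hat X'$ and then descend to $\hat X$ via flat base change along the finite field extension $F/F'$. First, I would apply the patching results of Section~\ref{localsection} to $\hat X'$. Since $\hat X'$ is smooth and connected, the closed fibre $X'$ is irreducible, $U' = X'\smallsetminus S'$ is an irreducible Zariski open subset, and for each $P' \in S'$ there is a unique branch $\wp'$ of $X'$ at $P'$; moreover this $\wp'$ corresponds to the height-one prime $t\hat R_{P'}$, so that $F_{\wp'}$ coincides with the overlap field $F_{P'}^\circ$ in the notation of Theorem~\ref{localpatchingseveral}. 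Applying Theorem~\ref{localpatchingseveral} with $U=X'$ together with Proposition~\ref{localinverse} shows that $F'$ is the inverse limit of the finite inverse system $\F_{\hat X',S'}$; equivalently, the sequence of $F'$-vector spaces
\[
0 \to F' \to F_{U'} \times \prod_{P'\in S'} F_{P'} \to \prod_{P'\in S'} F_{\wp'}
\]
is exact, with last arrow the difference of the two natural inclusions.

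Next, I would tensor the above sequence with $F$ over $F'$. Because $F'$ is a field, $F$ is flat as an $F'$-module, so the tensored sequence remains exact:
\[
0 \to F \to F\otimes_{F'}\Bigl(F_{U'}\times\prod F_{P'}\Bigr) \to F\otimes_{F'}\prod F_{\wp'}.
\]
Here I use that $F \otimes_{F'} F' = F$ (the leftmost term of the original sequence), so the left-hand term after tensoring is identified canonically with $F$.

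To conclude, I would invoke Lemma~\ref{singisos} to identify the middle and right-hand terms with the products appearing in the statement. Part~(a) of that lemma provides the three $F$-algebra isomorphisms $F\otimes_{F'} F_{U'}\cong \prod_U F_U$, $F\otimes_{F'} F_{P'}\cong \prod_{P\mapsto P'} F_P$, and $F\otimes_{F'} F_{\wp'}\cong \prod_{\wp\mapsto \wp'} F_\wp$, and since $S = f^{-1}(S')$ the disjoint union of these local products (over $P' \in S'$ and over branches $\wp'$) exhausts $\prod_P F_P$ and $\prod_\wp F_\wp$ respectively. Part~(b) of the lemma ensures that the natural maps obtained by tensoring agree with $\iota_U$ and $\iota_P$ under these identifications, so the tensored sequence is precisely the desired one. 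The equivalence with the inverse limit statement is then just the universal property of inverse limits applied to the diagram $\F_{\hat X,S}$.

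The main obstacle is really the bookkeeping in Step~1, namely verifying that the smooth-case patching theorem does give an exact sequence of the three-term shape above (with the overlap fields $F_{P'}^\circ$ correctly identified with the singular-setup fields $F_{\wp'}$), since all the remaining content of the argument is formal flat base change plus the identifications already supplied by Lemma~\ref{singisos}.
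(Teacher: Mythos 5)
Your proposal is correct and follows essentially the same route as the paper: apply Theorem~\ref{localpatchingseveral} together with Proposition~\ref{localinverse} to the smooth curve $\hat X'$ (noting that $F_{\wp'}=F_{P'}^\circ$ since $X'$ is smooth), rewrite the inverse-limit statement as exactness of a three-term sequence over $F'$, tensor over $F'$ with $F$ using flatness, and invoke Lemma~\ref{singisos} to identify the resulting terms with $\prod F_U$, $\prod F_P$, $\prod F_\wp$. Your extra care in spelling out the identification of $F_{\wp'}$ with the overlap field $F_{P'}^\circ$ is a worthwhile detail the paper leaves implicit.
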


\begin{proof}
With respect to the inclusions $F_{U'} \to F_{\wp'}$ and $\iota_{P'}:F_{P'} \to  F_{\wp'}$, we obtain an inverse system $\F' = \F_{\hat X', S'}$ of fields $F_{U'}, F_{P'}, F_{\wp'}$, where $P'$ ranges over $S'$ and $\wp'$ ranges over the corresponding branches.  
Applying Proposition~\ref{localinverse} in the situation of Theorem~\ref{localpatchingseveral},
the function field $F'$ of $\hat X'$ is the inverse limit of the system $\F'$, viewing each of the fields fields $F_{U'}, F_{P'}, F_{\wp'}$ as a one-dimensional vector space over itself.
Writing $F' \to F_{U'} \times \prod F_{P'}$ and $F_{U'} \to \prod F_{\wp'}$ for the diagonal inclusions, and writing $\prod F_{P'} \to  \prod F_{\wp'}$ for the product of the maps $-\iota_{P'}$, 
the inverse limit assertion for $\F'$ is equivalent to the exactness of the sequence of $F'$-vector spaces
$$0 \to F' \to F_{U'} \times \prod F_{P'} \to \prod F_{\wp'}.$$

The desired exactness now follows from tensoring this exact sequence over $F'$ with $F$, and using Lemma~\ref{singisos}.  This exactness is then equivalent to the assertion that $F$ is the inverse limit of the system $\F_{\hat X,S}$.  \end{proof}

Note that in the above result, as in Proposition~\ref{localinverse} in the situation of Theorem~\ref{localpatchingseveral}, we must phrase the assertion in terms of inverse limits rather than intersections, because the various fields in the inverse system are not all contained in some common overfield in the system.  

Under Hypothesis~\ref{singpatchhyp} and the above notation, we
define a {\bf (field) patching problem} $\mathcal{V}$ for $(\hat X, S)$ to be a patching problem (in the sense of Section~\ref{setup}) for the inverse system $\F=\F_{\hat X,S}$.  Because of the form of the index set of $\F$, and as noted in the last paragraph of Section~\ref{setup}, giving such a patching problem is equivalent to giving:

\renewcommand{\theenumi}{\roman{enumi}}
\begin{enumerate}
\item a finite dimensional $F_U$-vector space $V_U$ for every irreducible component $U$ of $X\smallsetminus S$;\label{pp1}
\item a finite dimensional $F_P$-vector space $V_P$ for every $P \in S$;\label{pp2}
\item an $F_{\wp}$-vector space isomorphism $\mu_{U,P,\wp}:V_U \otimes_{F_U}
  F_{\wp}  \iso V_P  \otimes_{F_P} F_{\wp}$ for each choice of $U,P,\wp$,
  where $U$ is  an irreducible component of $X\smallsetminus S$; $P \in S$ is in the closure
  of $U$; and $\wp$  is a branch of $X$ at $P$ that lies on $U$.\label{pp3}
\end{enumerate}

We write $\PP(\hat X,S)$ for the category $\PP(\F)$ of patching problems for $(\hat X, S)$ (or equivalently, for $\F$).  The function field $F$ of $\hat X$ is contained in each $F_i$ for $i \in I$, and in fact $F$ is the inverse limit of the $F_i$ by Proposition~\ref{exseqsing}.

By the above containments, every finite dimensional $F$-vector space $V$ induces a patching problem $\beta_{\hat X, S}(V)$ for $(\hat X, S)$ via base change, and $\beta_{\hat X, S}$ defines a functor from $\Vect(F)$ to $\PP(\hat X,S)$.  There is also a functor $\iota_{\hat X, S}$ from 
$\PP(\hat X,S)$ to $\Vect(F)$ that assigns to each patching problem its inverse limit (which we view as the ``intersection'', though as in the case of the fields there is in fact no common larger object within which to take an intersection).

The following result is similar to Theorem~1(a) of \cite{harbaterstevenson},
\S 1, which considered a related notion of
patching problems for rings and modules rather than for fields and vector spaces.

\begin{thm} \label{singularpatching}
Under Hypothesis~\ref{singpatchhyp}, the base change functor $\beta_{\hat X,S}: \Vect(F) \to \PP(\hat X,S)$ is an equivalence of categories, and $\iota_{\hat X, S}\beta_{\hat X, S}$ is isomorphic to the identity functor on $\Vect(F)$.
\end{thm}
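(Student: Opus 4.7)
The plan is to reduce to the smooth curve $\hat X'$, where Theorem~\ref{localpatchingseveral} applies, by pushing patching data forward along the finite map $f:\hat X\to\hat X'$. The central device is Lemma~\ref{singisos}, which identifies $F\otimes_{F'}F_{i'}$ with the product of the corresponding $F_i$'s on $\hat X$.

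Given a patching problem $\mathcal V=\{V_U,V_P;\mu_{U,P,\wp}\}$ of dimension $n$ for $(\hat X,S)$, I would first build a ``pushforward'' patching problem $\mathcal V'$ for $(\hat X',S')$ by setting $V'_{U'}:=\prod_{U\to U'}V_U$, viewed as an $F_{U'}$-module through $F_{U'}\hookrightarrow F\otimes_{F'}F_{U'}=\prod F_U$, and similarly $V'_{P'}:=\prod_{P\to P'}V_P$. The isomorphisms $\mu'_{U',P',\wp'}$ are obtained by assembling the $\mu_{U,P,\wp}$ for branches $\wp$ of $X$ lying over $\wp'$ and reinterpreting tensor products via Lemma~\ref{singisos}(a). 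A dimension count using $\sum_{U\to U'}[F_U:F_{U'}]=\deg f$ (and the analogous identity over each $P'$) shows that $\mathcal V'$ is a valid patching problem of $F'$-dimension $n\cdot\deg f$.

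Theorem~\ref{localpatchingseveral} applied to $(\hat X',S')$ then yields a finite-dimensional $F'$-vector space $V$ with $\beta_{\hat X',S'}(V)\cong\mathcal V'$. Each $V'_{i'}$ carries a natural $F$-action (diagonal via $F\to F\otimes_{F'}F_{i'}=\prod F_i$, acting on each $V_i$ through its $F_i$-module structure), and these actions commute with the $\mu'$'s, so together they give a ring homomorphism $F\to\End_{\PP(\hat X',S')}(\mathcal V')$. Full faithfulness of $\beta_{\hat X',S'}$ converts this into a ring homomorphism $F\to\End_{F'}(V)$, endowing $V$ with an $F$-module structure under which $\dim_F V=n$. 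To verify $\beta_{\hat X,S}(V)\cong\mathcal V$, I would project the isomorphism
$$V\otimes_{F'}F_{U'}=V\otimes_F(F\otimes_{F'}F_{U'})=\prod(V\otimes_F F_U)\iso\prod V_U$$
onto its $U$-component to recover $V\otimes_F F_U\cong V_U$, and likewise at the $P$ and $\wp$ levels; compatibility with the $\mu$'s then follows by naturality. Full faithfulness of $\beta_{\hat X,S}$ is an entirely parallel argument on morphisms.

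The second assertion $\iota_{\hat X,S}\beta_{\hat X,S}\cong\mathrm{id}_{\Vect(F)}$ follows directly from Proposition~\ref{exseqsing}: tensoring its exact sequence $0\to F\to\prod F_U\times\prod F_P\to\prod F_\wp$ over $F$ with any $W\in\Vect(F)$ (using flatness) yields $W=\lim_\leftarrow(W\otimes_F F_i)=\iota_{\hat X,S}\beta_{\hat X,S}(W)$. The main obstacle I anticipate is the combinatorial bookkeeping of the first step---checking that the branches $\wp$ of $X$ lying over each $\wp'$ line up correctly with the factors of $V'_{U'}$ and $V'_{P'}$ so that $\mu'$ is well-defined and is a base-change isomorphism---together with verifying that the resulting $F$-action on $V$ really assembles into an $F$-module structure rather than merely a commuting family of $F'$-linear endomorphisms.
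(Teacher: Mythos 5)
Your proposal is correct and follows essentially the same route as the paper: push the patching problem forward along $f$ using Lemma~\ref{singisos}, solve over $\hat X'$ via Theorem~\ref{localpatchingseveral}, transport the $F$-action to the solution $W$ via full faithfulness of $\beta_{\hat X',S'}$ (the paper packages this as a morphism $f_*(F)\to\End_{F'}(W)$ rather than a ring map $F\to\End_{F'}(W)$, but these are the same idea), and deduce the second assertion by tensoring the exact sequence of Proposition~\ref{exseqsing}. The concerns you raise at the end---that the $F$-actions on the components assemble into a genuine $F$-module structure on $W$, and that the branches over each $\wp'$ match up correctly---are exactly the points the paper handles via the identity patching problem and the $F_{\wp'}$-algebra identifications of Lemma~\ref{singisos}(a), so your outline is sound.
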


\begin{proof}
The equivalence of categories assertion is that $\beta_{\hat X,S}$ is surjective on isomorphism classes; and that the natural maps $\Hom_F(V_1,V_2) \to \Hom_F(\beta_{\hat X,S}(V_1),\beta_{\hat X,S}(V_2))$ are bijective for $V_1,V_2$ in $\Vect(F)$.  We show that these hold, and that $\iota_{\hat X, S}\beta_{\hat X, S}$ is isomorphic to the identity functor, in steps (with the surjectivity requiring the bulk of the work).

\medskip

{\it Step~1:} To show that $\beta_{\hat X,S}$ is surjective on isomorphism classes.

\smallskip

As in the discussion before the theorem, a patching problem $\mathcal{V}$ for $(\hat X,S)$ corresponds to a collection of finite-dimensional $F_U$-vector spaces $V_U$ and $F_P$-vector spaces $V_P$ together with isomorphisms $\mu_{U,P,\wp}$. Let $W_{U'} = \prod V_U$ (ranging over components $U$ of $X \smallsetminus S$); and for $P' \in S'$ let  $W_{P'} = \prod V_P$, where $P$ ranges
over $S_{P'} := f^{-1}(P') \subseteq S$.  Since the fields $F_U$ and $F_P$ are respectively finite over $F_{U'}$ and $F_{P'}$ (where $f(P)=P'$), 
$W_{U'}$ is a finite dimensional vector space over $F_{U'}$, and $W_{P'}$ is a  
finite dimensional vector space over $F_{P'}$.  

For each $P' \in S'$ with associated branch $\wp'$, we have identifications 
$\prod F_U \otimes_{F_{U'}} F_{\wp'} 
= (F \otimes_{F'} F_{U'}) \otimes_{F_{U'}} F_{\wp'}
= F \otimes_{F'} F_{\wp'}
= \prod F_\wp$ of $F_{\wp'}$-algebras, where the last product ranges over the branches $\wp$ lying over $\wp'$.  Using 
the algebra isomorphisms 
$F \otimes_{F'} F_{U'} \ \iso \  \prod F_U$ and $F \otimes_{F'} F_{\wp'} \ \iso \  \prod F_\wp$
from Lemma~\ref{singisos},
we thus obtain identifications
$W_{U'} \otimes_{F_{U'}} F_{\wp'} = (\prod V_U) \otimes_{F_{U'}} F_{\wp'}
= (\prod V_U) \otimes_{\prod F_U} (\prod F_U \otimes_{F_{U'}} F_{\wp'})
= (\prod V_U) \otimes_{\prod F_U} \prod F_\wp
= \prod_U (V_U \otimes_{F_U} \prod F_\wp)
= \prod_U \prod_\wp V_U \otimes_{F_U} F_\wp$
of $F_{\wp'}$-vector spaces, with the products ranging over components $U$ over $U'$, and branches $\wp$ lying on $U$ (and lying over $\wp'$).  Similarly, using the algebra isomorphisms 
$F \otimes_{F'} F_{P'} \ \iso \  \prod F_P$ and
$F \otimes_{F'} F_{\wp'} \ \iso \  \prod F_\wp$
from Lemma~\ref{singisos},
we obtain an identification $W_{P'} \otimes_{F_{P'}} F_{\wp'} 
= \prod_P \prod_{\wp} V_P \otimes_{F_P} F_\wp$ of $F_{\wp'}$-vector spaces, where the products range over points $P \in S_{P'}$ and branches $\wp$ lying over $\wp'$.   
Combining the above identifications with the product of the isomorphisms 
$\mu_{U,P,\wp}:V_U \otimes_{F_U} F_{\wp} \ \iso\ 
V_P  \otimes_{F_P} F_{\wp}$ for $P \in S$ over $P' \in S'$, we obtain an isomorphism
$\mu'_{U',P',\wp'}:W_{U'} \otimes_{F_{U'}} F_{\wp'} 
\ \iso \ W_{P'} \otimes_{F_{P'}} F_{\wp'}$
of $F_{\wp'}$-vector spaces.
  
As in the discussion before the theorem, the vector spaces $W_{U'}, W_{P'}$ together with the $F_{\wp'}$-isomorphisms $\mu'_{U',P',\wp'}$ define a patching problem $\mathcal{W} =: f_*(\mathcal{V})$ for $(\hat X', S')$.
By Theorem~\ref{localpatchingseveral}, there is a finite dimensional $F'$-vector space $W$ which is a solution to the patching problem $\mathcal{W}$; i.e., $\mathcal{W} = \beta_{\hat X',S'}(W)$.  
In order to complete the proof of the surjectivity of $\beta_{\hat X,S}$ on isomorphism classes of objects, it will suffice to give $W$ the structure of an $F$-vector space and to show that with respect to this additional structure, $\beta_{\hat X,S}(W)$ is isomorphic to the given patching problem
$\mathcal{V}$ for $(\hat X,S)$.  

To do this, consider the ``identity patching problem'' 
$\beta_{\hat X, S}(F)$ for
$(\hat X,S)$,  given by $F_U$, the $F_P$, and the identity maps on each
$F_{\wp}$.  Let $f_*(F)$ denote $F$ viewed as an $F'$-vector space; similarly let $f_*(F_U)$, $f_*(F_P)$ denote $F_U$, $F_P$ as vector spaces over $F_{U'}$, $F_{P'}$ respectively.  
The patching problem $\beta_{\hat X',S'}(f_*(F))$ for $(\hat X',S')$ induced by $f_*(F)$ is thus given by $f_*(F_U)$, the $f_*(F_P)$, and the identity map on $F_{\wp'}$.  Let 
$\alpha_{U}:f_*(F_{U}) \to \End_{F_{U'}}(W_{U})$  and $\alpha_{P}:f_*(F_{P}) \to \End_{F_{P'}}(W_{P})$ (for $P \in S_{P'}$) be the maps corresponding to scalar multiplication by $F_U$ and $F_P$ on the factors $V_U$ of $W_{U'}$ and the factors $V_P$ of $W_{P'}$, respectively.  These maps define a morphism in the
category of patching  problems $\bar \alpha:\beta_{\hat X',S'}(f_*(F)) \to \beta_{\hat X',S'}(\End_{F'}(W))$ for $(\hat X',S')$.
By the equivalence
of categories assertion in Theorem~\ref{localpatchingseveral} 
applied to the $T$-curve $\hat X'$ and finite subset $S'$,
the element  $\bar \alpha \in
\Hom_{F'}(\beta_{\hat X',S'}(f_*(F)),\beta_{\hat X',S'}(\End_{F'}(W))$ is induced by a unique morphism $\alpha
\in \Hom_{F'}(f_*(F),\End_{F'}(W))$  in the category of finite dimensional $F'$-vector
spaces.   As a result, $W$ is given the structure of a finite dimensional
$F$-vector space,  with $\alpha$ defining scalar multiplication.  
It is now straightforward to check that $\beta_{\hat X,S}(W)$ is isomorphic to
$\mathcal{V}$,  showing the desired surjectivity on isomorphism classes.

\medskip

{\it Step~2:}  To show that $\iota_{\hat X,S}\beta_{\hat X,S}$ is isomorphic to the identity functor.

\smallskip

For any $V$ in $\Vect(F)$, the induced patching problem
$\beta_{\hat X,S}(V)$ corresponds to data $V_U, V_P, \mu_{U,P,\wp}$.
Tensoring the exact sequence in Proposition~\ref{exseqsing} over $F$ with $V$ gives an exact sequence $$0 \to V \to \prod V_U \times \prod V_P \to \prod V_{\wp}$$ 
of $F$-vector spaces.  Here $V_{\wp} := V_P \otimes_{F_P} F_{\wp}$ for $\wp$
a  branch of $X$ at $P$; $V_U \to V_{\wp}$ is defined via $\mu_{U,P,\wp}$;
and $V_P \to V_\wp$ is minus the natural inclusion.
This shows that $V$ is naturally isomorphic to $\iota_{\hat X,S}(\beta_{\hat X,S}(V))$; i.e.,
$\iota_{\hat X,S}\beta_{\hat X,S}$ is isomorphic to the identity functor on $\Vect(F)$.  

\medskip

{\it Step~3:}  To show that $\beta_{\hat X,S}$ induces a bijection between maps between corresponding objects.

\smallskip

Consider $V_1,V_2$ in $\Vect(F)$, with induced patching problems $\beta_{\hat X,S}(V_1),\beta_{\hat X,S}(V_2)$.  Then $V_i \to V_{i,U} := V_i \otimes_F F_U$ 
and $V_i \to V_{i,P} := V_i \otimes_F F_P$ are
inclusions for $i=1,2$, for all $U$ and $P$; and a set of
compatible maps $V_{1,U} \to V_{2,U}$ and $V_{1,P} \to V_{2,P}$ determines a unique
map  $\iota_{\hat X,S}(\beta_{\hat X,S}(V_1)) \to \iota_{\hat X,S}(\beta_{\hat X,S}(V_2))$.
So the natural map from $\Hom_F(V_1,V_2)$ to $\Hom_F(\beta_{\hat X,S}(V_1),\beta_{\hat X,S}(V_2))$ is bijective (and this concludes the proof that $\beta_{\hat X,S}$ is an equivalence of categories). 
\end{proof}

Thus with $\hat X$ and $S$ as in the theorem, every patching problem for
$(\hat X, S)$
has a unique solution up to isomorphism, and this solution is given by the inverse limit of the fields defining the patching problem.

\begin{remark}
Given a non-empty finite subset $S' \subset X'$, the hypothesis on $S = f^{-1}(S')$ is satisfied if $S$ contains every point at which $X$ is not unibranched.  In particular, it is satisfied if $S$ contains all the points at which the reduced structure of $X$ is not regular.  
\end{remark}

In order to apply Theorem~\ref{singularpatching} to $T$-curves $\hat X$ that are not necessarily given in the context of Hypothesis~\ref{singpatchhyp}, we prove the next result.

\begin{prop}  \label{maptoline}
Let $\hat X$ be a projective curve over a discrete valuation ring $T$, having closed fibre $X$.  Let
$S$ be a finite set of closed points of $\hat X$.  Then there is a finite $T$-morphism $f:\hat X \to \P^1_T$ such that $S \subseteq f^{-1}(\infty)$.
\end{prop}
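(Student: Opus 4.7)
The plan is to realize $f$ as $[s_0 : s_\infty^k] : \hat X \to \P^1_T$ for suitable sections of a sufficiently positive line bundle, arranged so that $s_\infty$ vanishes on $S$ and $s_0, s_\infty^k$ have no common zero on $\hat X$. I fix an ample line bundle $L$ on $\hat X$, which exists since $\hat X$ is projective over $T$. For $n \gg 0$, Serre vanishing plus a standard dimension count yield a section $s_\infty \in H^0(\hat X, L^n)$ vanishing scheme-theoretically on $S$ but not vanishing identically on any irreducible component of $\hat X$ nor on any irreducible component of the closed fibre $X$. Let $Z \subset \hat X$ be the corresponding effective Cartier divisor; then $Z \supseteq S$, and by the choice of $s_\infty$ no irreducible component of $Z$ lies in $X$, so every component of $Z$ is horizontal, hence finite over $T$. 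Consequently $Z$ is semi-local and every line bundle on $Z$ is trivial.

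Next, I choose $m = kn$ with $k$ large enough that $H^1(\hat X, L^{m-n}) = 0$ (Serre vanishing). The exact sequence $0 \to L^{m-n} \xrightarrow{\cdot s_\infty} L^m \to L^m|_Z \to 0$ then shows that $H^0(\hat X, L^m) \to H^0(Z, L^m|_Z)$ is surjective. Since $L^m|_Z$ is trivial it admits a nowhere-vanishing section, which I lift to $s_0 \in H^0(\hat X, L^m)$. As $s_0$ is nonzero at every point of $Z$, which is the support of $s_\infty^k$, the sections $s_0$ and $s_\infty^k$ have no common zero in $\hat X$; therefore $f := [s_0 : s_\infty^k]$ defines a $T$-morphism $\hat X \to \P^1_T$. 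For $P \in S \subseteq Z$ we have $s_\infty(P) = 0$ and $s_0(P) \ne 0$, so $f(P) = \infty$; hence $S \subseteq f^{-1}(\infty)$.

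The main obstacle is verifying finiteness of $f$. It is proper since $\hat X$ is projective over $T$ and $\P^1_T$ is separated over $T$, so it suffices to show $f$ is non-constant on every irreducible component of $X$ and on every irreducible component of the generic fibre $\hat X_K$. For each component $X_i$ of $X$, the restriction $s_\infty|_{X_i}$ is a nonzero section of the non-trivial (ample) line bundle $L^m|_{X_i}$, hence has a zero $P$, at which $s_0(P) \ne 0$, so $f|_{X_i}$ is non-constant. For each $2$-dimensional component $\tilde C$ of $\hat X$, the zero scheme of $s_\infty|_{\tilde C}$ is a non-empty Cartier divisor on $\tilde C$; since no component of $Z$ lies in $X$, its codimension-one components are all horizontal, hence have generic points in the generic fibre $\tilde C_K$. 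At such a point $s_\infty$ vanishes while $s_0$ does not, so $f|_{\tilde C_K}$ is non-constant. This establishes quasi-finiteness, and together with properness it gives finiteness of $f$.
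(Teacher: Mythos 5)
Your proof is correct but follows a genuinely different route from the paper's. The paper embeds $\hat X$ in $\P^{\;\!n}_T$ and iteratively projects from closed points $P \notin \hat X$, using an explicit gadget with monic lifts of minimal polynomials to handle the case where $P$ is not $T$-rational, ultimately reducing to $\P^1_T$. You instead build the morphism in one step from two sections of a high power $L^m$ of an ample line bundle: one vanishing exactly along a horizontal divisor $Z \supseteq S$ and one nowhere-vanishing on $Z$, the latter produced by Serre vanishing and triviality of $\operatorname{Pic}$ of the semi-local finite $T$-scheme $Z$. Your finiteness argument (proper plus quasi-finite, the latter checked on components of the two fibres via non-constancy) is also sound. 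The paper's approach is more elementary (no sheaf cohomology), while yours is arguably cleaner and avoids the fiddly bookkeeping of compatible projections.

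The one spot you should tighten is the existence of $s_\infty$: you appeal to ``Serre vanishing plus a standard dimension count'' to produce a section of $L^n \otimes I_S$ that avoids all generic points of $\hat X$ and of $X$, but a naive union-of-proper-submodules argument can fail when the residue field $k$ of $T$ is finite. (This is exactly the subtlety the paper's projection argument is engineered to handle.) The argument does go through: for each generic point $\eta_i$ to be avoided, the submodule $M_{\eta_i} \subset H^0(\hat X, L^n \otimes I_S)$ of sections vanishing there has cokernel whose size grows without bound with $n$ (by Riemann--Roch and ampleness on the relevant fibre component), so for $n \gg 0$ one has $\bigl|\bigcup_i M_{\eta_i}\bigr| < \bigl|H^0\bigr|$ even modulo a high power of $t$ and even when $k$ is finite. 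You should spell this out (or at least flag the finite-residue-field issue), since as written the ``dimension count'' reads as if it were the usual infinite-field avoidance lemma.
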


\begin{proof}
Let $t$ be a uniformizer of $T$.
Since $\hat X$ is projective, we may fix an embedding $\hat X \hookrightarrow \P^{\;\!n}_T$ for some $n \ge 1$.  We proceed by induction on $n$.  If $n=1$ then $\hat X = \P^1_T$.  Choosing a rational function $f$ on $\P^1_T$ with poles at each point of $S \subset \hat X = \P^1_T$ yields the result in this case.

So assume $n>1$ and that the result holds for $n-1$.  
Pick a closed point $P \in \P^{\;\!n}_T \smallsetminus \hat X$.  Its residue field $k'$ is a finite extension of the residue field $k$ of $T$; and $P$ is defined over $k'$, say with homogeneous coordinates $(a_0:\cdots:a_n)$ where each $a_i \in k'$.  Possibly after permuting the coordinates, we may assume that $a_0 \ne 0$.  
For each $i \ne 0$, let $g_i(x) \in k[x]$ be the monic minimal polynomial of $a_i/a_0 \in k'$ over $k$.  Since $k$ is the residue field of $T$, we may choose monic polynomials $G_i(x) \in T[x]$ whose reductions modulo $tT(x)$ are $g_i(x)$.  Let $d_i$ be the degree of $G_i$ (or of $g_i$); let $d$ be the least common multiple of $d_1,\dots,d_n$; and let $H_i(x,y)=y^d G_i(x/y)^{d/d_i} \in T[x,y]$.  Thus $H_i$ is a homogeneous polynomial that has total degree $d$ in $x,y$, and is congruent to $x^d$ modulo $yT[x,y]$.  Letting  $m_0,\dots,m_N$ be the distinct (unordered) monomials of degree $d$ in $x_0,\dots,x_n$, we may write $H_i(x_i,x_0) = \sum_{j=0}^N c_{ij}m_j$ for some elements $c_{ij} \in T$.  Note that the locus of $H_1(x_1,x_0) = \cdots = H_n(x_n,x_0) = 0$ is a closed subset of $\P^{\;\!n}_T$ that meets the closed fibre precisely at $P$, and so is disjoint from $\hat X$.

Define the morphism $p:\hat X \to \P^{\;\!n-1}_T$ by $p(x_0:\cdots:x_n) = (H_1(x_1,x_0):\cdots:H_n(x_n,x_0))$. 
Thus $p$ is the composition of the $d$-uple embedding $\iota_d$ of $\hat X \subset \P^{\;\!n}_T$ into $\P^N_T$ with the projection morphism $\pi:\P^N_T - L \to \P^{\;\!n-1}_T$ defined on the complement of the linear subspace $L \subset \P^N_T$ that is given by the linear forms 
$\sum_{j=0}^N c_{ij}m_j$ on $\P^N_T$ for $i=1,\dots,n$.  Now the $d$-uple embedding $\iota_d$ is finite; and so is the restriction of the projection $\pi$ to the closed subscheme $\iota_d(\hat X) \subset \P^N_T - L$, by Proposition~6 of Chapter~II, Section~7 of \cite{redbook}.  So $p$ is finite.  

Let $\hat X_1 \subseteq \P^{\;\!n-1}_T$ be the image of $p$.  By the inductive hypothesis there is a finite $T$-morphism $f_1:\hat X_1 \to \P^1_T$ such that $\pi(S) \subseteq f_1^{-1}(\infty)$.  So $f_1 \circ \pi:\hat X \to \P^1_T$ is a finite $T$-morphism such that $S \subseteq f^{-1}(\infty)$.
\end{proof}

Using the above proposition, we may apply Theorem~\ref{singularpatching} to a given curve $\hat X$ and a given finite set $S$ after possibly enlarging $S$: 

\begin{cor} \label{refinement}
Let $T$ be a complete discrete valuation ring; let $\hat X$ be a connected projective normal $T$-curve with closed fibre $X$; and let $S$ be a finite set of closed points of $X$.  Then there exists a finite subset $S_1 \subset X$ containing $S$ such that Theorem~\ref{singularpatching} holds for $\hat X$, $S_1$.
\end{cor}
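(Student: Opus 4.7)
The plan is to bring the pair $(\hat X, S)$ into the setting of Hypothesis~\ref{singpatchhyp} by exhibiting $\hat X$ as a finite cover of the smooth projective $T$-curve $\P^1_T$ via Proposition~\ref{maptoline}. Since Proposition~\ref{maptoline} lets us guarantee that any prescribed finite set of closed points is contained in the fibre over $\infty$, we can arrange that the preimage of a single point covers everything we need.

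First, enlarge $S$ as follows. Let $\Sigma \subset X$ be the set of closed points where two distinct irreducible components of $X$ meet. Since $\hat X$ (hence $X$) is a Noetherian scheme with only finitely many irreducible components, and any two such components intersect in a zero-dimensional subscheme, $\Sigma$ is finite. Set $S_0 := S \cup \Sigma$, a finite set of closed points of $\hat X$. Next, apply Proposition~\ref{maptoline} to the pair $(\hat X, S_0)$: this yields a finite $T$-morphism $f:\hat X \to \P^1_T$ such that $S_0 \subseteq f^{-1}(\infty)$, where $\infty$ denotes the standard closed point of $\P^1_k \subset \P^1_T$.

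Now take $\hat X' := \P^1_T$ and $S' := \{\infty\} \subset \P^1_k$, and set $S_1 := f^{-1}(S') \subseteq X$. This $S_1$ is finite (because $f$ is finite), and by construction $S_1 \supseteq S_0 \supseteq S$, while also $S_1 \supseteq \Sigma$. It remains only to check Hypothesis~\ref{singpatchhyp}: $T$ is a complete discrete valuation ring; $\hat X$ is a connected projective normal $T$-curve by assumption; $\hat X' = \P^1_T$ is a connected projective normal $T$-curve that is moreover smooth over $T$; the morphism $f$ is finite; $S'$ is a nonempty finite set of closed points of $X'$; and $S_1 = f^{-1}(S')$ contains the set $\Sigma$ of points where distinct irreducible components of $X$ meet. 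Therefore Theorem~\ref{singularpatching} applies to $\hat X$ with the finite set $S_1$, giving the desired conclusion.

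There is no real obstacle here: the corollary is a direct combination of Proposition~\ref{maptoline} with Theorem~\ref{singularpatching}. The only point that requires a moment's thought is making sure the enlargement $S \rightsquigarrow S_0$ is carried out \emph{before} invoking Proposition~\ref{maptoline}, so that the resulting $S_1 = f^{-1}(\infty)$ automatically absorbs both $S$ and the set $\Sigma$ of singular/crossing points needed to satisfy the hypothesis of Theorem~\ref{singularpatching}.
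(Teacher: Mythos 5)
Your proof is correct and follows essentially the same route as the paper: enlarge $S$ so that it contains the points where distinct irreducible components of $X$ meet, then apply Proposition~\ref{maptoline} to obtain a finite morphism $f:\hat X \to \P^1_T$ with the enlarged set inside $f^{-1}(\infty)$, and take $S_1 = f^{-1}(\infty)$ so that Hypothesis~\ref{singpatchhyp} holds with $\hat X' = \P^1_T$ and $S' = \{\infty\}$. The only (inessential) difference is that you write out the finite set $\Sigma$ explicitly rather than simply saying ``after enlarging $S$.''
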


\begin{proof}
After enlarging $S$, we may assume that $S$ contains the (finitely many) closed points of $X$ where distinct irreducible components of $X$ meet.  
By Proposition~\ref{maptoline}, there is a finite morphism $f: \hat X \to \P^1_T$ such that $S \subseteq f^{-1}(\infty)$.  So 
Hypothesis~\ref{singpatchhyp} is satisfied by the data $T$, $f: \hat X \to \P^1_T$, $S' = \{\infty\}$, $S_1 = f^{-1}(S')$.  Thus Theorem~\ref{singularpatching} holds for $\hat X$, $S_1$. 
\end{proof}

\section{Applications} \label{applications}
In this section, we give several short applications of the new version of patching. 

\subsection{Patching Algebras and Brauer Groups} \label{algebras}
Our patching results for vector spaces carry over to patching for algebras of
various sorts,  because patching was phrased as an equivalence of categories.  

To be more precise, for a field $F$ we will consider finite dimensional
associative $F$-algebras, with or without a multiplicative identity.  We will
also consider additional structure that may be added to the algebra, e.g.\
commutativity, separability, and being Galois with (finite) group $G$.  A finite
commutative $F$-algebra is assumed to have an identity; and it is separable if and only if it is a product of finitely
many separable field extensions of $F$.  By a $G$-{\bf Galois} $F$-algebra
we will mean a commutative $F$-algebra $E$ together with an $F$-algebra
action of $G$ on $E$ such that the ring of $G$-invariants of $E$ is $F$, and such that the inertia group 
$I_\m \leq G$ at each maximal ideal $\m$ of $E$ is trivial.  Such
an extension is necessarily separable and the $G$-action is necessarily faithful. 
If $E$ is a field, being a $G$-Galois
$F$-algebra is equivalent to being a $G$-Galois field extension.  We will also
consider (finite dimensional) central simple algebras over $F$.

\begin{thm}\label{patchingalgebras}
Under the hypotheses of the patching theorems of Sections~\ref{globalsection}, \ref{localsection}, and~\ref{singular} (Theorems~\ref{globalpatching}, \ref{globalpatchingseveral}, \ref{clpatching}, \ref{localpatchingseveral}, \ref{singularpatching}, patching holds with the category of finite dimensional
vector spaces replaced by any of the following (all assumed finite dimensional
over $F$):  
\renewcommand{\theenumi}{\roman{enumi}}
\begin{enumerate}
\item associative $F$-algebras;\label{alg1}
\item associative $F$-algebras with identity;\label{alg2}
\item commutative $F$-algebras;\label{alg3}
\item separable commutative $F$-algebras;\label{alg4}
\item $G$-Galois $F$-algebras;\label{alg5}
\item central simple $F$-algebras.\label{alg6}
\end{enumerate}
\end{thm}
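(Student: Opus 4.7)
My plan is to deduce each case from the vector-space patching equivalences already established (Theorems~\ref{globalpatching}, \ref{globalpatchingseveral}, \ref{clpatching}, \ref{localpatchingseveral}, and \ref{singularpatching}) by observing that every item in the list encodes additional structure on a vector space in terms of $F$-linear maps, tensor products, and isomorphism conditions. In each cited theorem the base change functor $\beta$ is an equivalence of categories and is compatible with tensor products: on each patch, $\beta(V \otimes_F W)$ is the tensor product of $\beta(V)$ and $\beta(W)$ over the patch field. So whenever a patching problem of algebras is given, the underlying vector-space patching problem has a unique vector-space solution $V$, and it remains only to transfer the additional structure to $V$.

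For (i), (ii), (iii): an associative $F$-algebra structure on $V$ is the data of an $F$-linear map $\mu : V \otimes_F V \to V$ satisfying associativity (an equality of two maps $V^{\otimes 3} \to V$); an identity is an $F$-linear map $e : F \to V$ satisfying the usual unit axioms; commutativity is the relation $\mu = \mu \circ \sigma$, where $\sigma$ swaps tensor factors. A patching problem of such algebras provides these data on each patch, compatibly with the patching isomorphisms; that is, a morphism $\mathcal{V} \otimes \mathcal{V} \to \mathcal{V}$ of patching problems, together with a morphism from the ``identity'' patching problem where applicable. By the full faithfulness half of the equivalence, each such morphism descends to a unique $F$-linear map on $V$. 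The defining equations hold patchwise; applying the injectivity of the bijection on $\Hom$-sets to the two morphisms whose equality is to be checked then yields the same equations on $V$.

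For (iv), (v), (vi): each of these additional properties can be characterized by the requirement that a canonical morphism of $F$-algebras be an isomorphism. A commutative $F$-algebra $A$ is separable iff the trace pairing induces an isomorphism $A \to \Hom_F(A, F)$; a commutative $F$-algebra $A$ with an action of $G$ by $F$-algebra automorphisms is $G$-Galois iff the canonical map $A \otimes_F A \to \prod_{g \in G} A$, sending $a \otimes b$ to $(a \cdot g(b))_{g \in G}$, is an isomorphism of $F$-algebras; and a finite-dimensional $F$-algebra $A$ is central simple iff the canonical map $A \otimes_F A^{\mathrm{op}} \to \End_F(A)$ is an isomorphism. Each of these canonical morphisms is built functorially out of tensor products, opposite algebras, vector-space duals, and endomorphism algebras --- constructions that commute with the base change defining $\beta$. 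Hence the canonical morphism on $V$ corresponds under $\beta$ to the canonical morphism on each patch, and since $\beta$ is an equivalence of categories it both preserves and reflects isomorphisms; thus the canonical morphism is an isomorphism on $V$ iff it is on every patch, and the latter is provided by the hypothesis on the patching problem.

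The main obstacle I anticipate is case (v). The definition of $G$-Galois algebra used in the statement involves invariants and inertia groups at maximal ideals, and so is not manifestly diagrammatic; hence one must first verify that the isomorphism criterion $A \otimes_F A \cong \prod_{g \in G} A$ invoked above is equivalent to the definition in the statement --- a standard but non-trivial fact from the theory of Galois algebras that would need to be cited or spelled out carefully. Once that equivalence is in hand, case (v) reduces to the isomorphism-transfer argument of the previous paragraph, just as for (iv) and (vi).
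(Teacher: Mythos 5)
Your proposal is correct, and for items (iv), (v), (vi) it takes a genuinely different route than the paper. The paper handles (i)--(iii) just as you do for (i) and (ii) (via the tensor-category structure of $\beta$; for (iii) it additionally invokes the fact that the inverse functor is given by intersection, which is an alternative to your equation-transfer argument via faithfulness). But for (iv)--(vi) the paper argues case by case using non-diagrammatic facts: for (iv) it uses that a finite $F$-algebra $E$ is separable iff $E \otimes_F F'$ is separable for a field extension $F'$; for (v) it checks the invariants condition directly by intersecting the rings of $G$-invariants on the patches, and the inertia condition by comparing residue fields on $E$ and on the $E_i$; and for (vi) it shows centrality is preserved under base change and invokes \cite{pierce} for simplicity in the presence of centrality. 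Your uniform strategy---expressing each property as the requirement that a canonical, base-change-compatible morphism (trace pairing $A \to \Hom_F(A,F)$; Galois map $A \otimes_F A \to \prod_{g \in G} A$; sandwich map $A \otimes_F A^{\mathrm{op}} \to \End_F(A)$) be an isomorphism, and then using that an equivalence of categories both preserves and reflects isomorphisms---is more systematic and arguably more conceptually transparent, at the cost of needing to verify up front that each diagrammatic criterion agrees with the definition used in the paper.

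Your flagged concern about (v) is the one place that needs care, and your instinct is right. The paper's definition of $G$-Galois $F$-algebra (invariants equal to $F$, trivial inertia at every maximal ideal) and the torsor-style characterization via the isomorphism $A \otimes_F A \to \prod_{g\in G} A$ are indeed equivalent for finite commutative $F$-algebras; this is standard Chase--Harrison--Rosenberg Galois theory of commutative rings and should be cited explicitly if you take this route. Similarly, for (iv) it is worth recording that the trace-form characterization of separability requires finite dimensionality (which is assumed throughout), and for (vi) that the endomorphism-algebra criterion is the Azumaya/Brauer characterization. Once those equivalences of definitions are in place, your argument closes cleanly, and it has the advantage of treating (iv)--(vi) in a single stroke where the paper uses three distinct lemmas.
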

\begin{proof} We follow the strategy of \cite{CAPS}, Prop.~2.8 (cf.\ also
  \cite{MSRI}, 2.2.4).

The equivalence of categories in each of the patching results of
Sections~\ref{globalsection} and \ref{localsection}
is given by a base change functor $\beta$, which preserves tensor products.  
So $\beta$ is an equivalence of tensor categories.  

An associative $F$-algebra is an $F$-vector space $A$ together with a vector
space  homomorphism $p:A \otimes_F A \to A$ that defines the product and
satisfies  an identity corresponding to the associative law.  Since the base
change  patching functor $\beta$ is an equivalence of tensor categories, the
property of  having such a homomorphism $p$ is preserved; so (\ref{alg1}) follows.   Part
(\ref{alg2}) is similar, since a multiplicative identity corresponds to an $F$-vector
space homomorphism $i:F \to A$ satisfying the identity law. 

Part (\ref{alg3}) follows from the fact that up to isomorphism, $\beta$ has an inverse given by
intersection (i.e.\ fibre product or inverse limit); see Propositions~\ref{harb} and \ref{patchprob}.  
So a commutative
$F$-algebra induces commutative algebras on the patches and vice versa.   
Part (\ref{alg4}) holds because if $F'$ is a field extension of $F$, then a finite
$F$-algebra $E$ is separable if and only if the $F'$-algebra $E \otimes_F F'$ is separable.  

For part (\ref{alg5}), the first condition (on $G$-invariants) follows using that the inverse to $\beta$ is
given by intersection, together with the fact that the intersection of the rings
of $G$-invariants in fields $E_i:= E \otimes_F F_i$ is the ring of $G$-invariants in the
intersection of the $E_i$.  The second condition, on inertia groups, holds because the residue fields of $E$ are contained in those of $E_i$, with the $G$-actions on the latter being induced by those on the former.

For part (\ref{alg6}), we are reduced by (\ref{alg2}) to verifying that centrality and
simplicity are preserved.  If $E$ is the center of an $F$-algebra $A$, then $E
\otimes_F F'$ is the center of the $F'$-algebra $A' := A \otimes_F F'$.  So
centrality is preserved by $\beta$ and its inverse.  The same holds for simplicity (in the presence of centrality) by \cite{pierce}, Section~12.4, Lemma~b.
\end{proof}

On the other hand, Theorem~\ref{patchingalgebras} as phrased above does {\it not} apply to
(finite dimensional central) division algebras over $F$.  For example, in the
context of global patching in Section~\ref{globalsection}, let $T = k[[t]]$ where $\ch k \ne 2$; $\hat
X = \P^1_T$ (the projective $x$-line over $T$); $U_1 = \A^1_k = \P^1_k \smallsetminus
\{\infty\}$, $U_2 = \P^1_k \smallsetminus \{0\}$, and $U_0 = U_1 \cap U_2 = \P^1_k \smallsetminus
\{0,\infty\}$.  With notation as in Section~\ref{globalsection}, we consider the function field $F =
k((t))(x)$ of $\hat X$, along with the fraction fields $F_1$, $F_2$, $F_0$ of the
rings $k[x][[t]]$, $k[x^{-1}][[t]]$, $k[x,x^{-1}][[t]]$, respectively.  Let $D$
be the quaternion algebra over $F$ generated by elements $a,b$ satisfying
$a^2=b^2=1-xt$, $ab=-ba$.  Then $D \otimes_F F_1$ is split as an algebra over
$F_1$, i.e.\ is isomorphic to $\Mat_2(F_1)$ (and not to a division
algebra), because $F_1$ contains an element $f$ such that $f^2=1-xt$ (where $f$
is given by the binomial power series expansion in $t$ for $(1-xt)^{1/2}$).   

But the other direction of the above theorem does hold for division algebras:
viz.\ if $D_1,D_2,D_0$ are division algebras over $F_1,F_2,F_0$ in the context
of Theorem~\ref{globalpatching}, then the resulting finite dimensional central
simple $F$-algebra $D$ (given by part (\ref{alg6}) of the above theorem) is in fact a
division algebra.  This is because $D$ is contained in the division algebras
$D_i$, hence it has no zero-divisors, and so is a division algebra
(being finite dimensional over $F$). 

Despite the failure of the above result for division algebras, below we state a
patching result for Brauer groups.  For any field $F$, let $\Br(F)$ be
the set of isomorphism classes of (finite dimensional central) division algebras
over $F$.  The elements of $\Br(F)$ are in bijection with the set of {\bf Brauer equivalence classes} $[A]$ of (finite dimensional) central simple $F$-algebras $A$.
Namely, by Wedderburn's theorem, every central simple $F$-algebra $A$ is
isomorphic to a matrix ring $\Mat_n(D)$ for some unique positive integer
$n$ and some $F$-division algebra $D$ which is unique up to isomorphism; and two
central simple algebras are called {\bf Brauer equivalent} if the underlying
division algebras are isomorphic.  By identifying elements of $\Br(F)$ with
Brauer equivalence classes, $\Br(F)$ becomes an abelian group under the
multiplication law $[A][B]=[A\otimes_F B]$, called the {\bf Brauer group} of $F$. (See also Chapter~4 of \cite{herstein}.) 

If $F'$ is an extension of a field $F$ (not necessarily algebraic), and if $A$
is a central simple $F$-algebra, then $A \otimes_F F'$ is a central simple
$F'$-algebra (\cite{pierce}, 12.4, Proposition~b(ii)).  Moreover if $A,B$ are Brauer equivalent over $F$, then $A
\otimes_F F', B \otimes_F F'$ are Brauer equivalent over $F'$.  So there is an
induced homomorphism $\Br(F) \to \Br(F')$.  In terms of this homomorphism, we can
state the following patching result for Brauer groups, which says that giving a
division algebra over a function field $F$ is equivalent to giving compatible
division algebras on the patches:

\begin{thm}
Under the hypotheses of Theorem~\ref{globalfactgeneral}, let $U = U_1 \cup U_2$ and form the fibre product of groups $\Br(F_1) \times_{\Br(F_0)} \Br(F_2)$ with respect to the maps $\Br(F_i) \to \Br(F_0)$ induced by $F_i \hookrightarrow F_0$.  Then
the base change map 
$\beta:\Br(F_U) \to \Br(F_1) \times_{\Br(F_0)} \Br(F_2)$ is a group isomorphism.
\end{thm}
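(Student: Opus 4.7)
The plan is to verify separately that $\beta$ is a well-defined group homomorphism, that it is surjective, and that it is injective. The first two items follow readily from the central simple algebra patching result, Theorem~\ref{patchingalgebras}(vi), while injectivity additionally requires the Skolem--Noether theorem together with the factorization result Theorem~\ref{globalfactgeneral}.

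For well-definedness, given a central simple $F_U$-algebra $A$, both $A \otimes F_i$ are central simple over $F_i$, and their further base changes to $F_0$ are canonically isomorphic (each to $A \otimes_{F_U} F_0$); hence $\beta([A]) := ([A \otimes F_1],[A \otimes F_2])$ lies in the fibre product. Base change preserves Brauer equivalence, so the assignment descends to Brauer classes, and it is multiplicative because base change commutes with tensor product.

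For surjectivity, take $([D_1],[D_2])$ in the fibre product; thus $D_1 \otimes_{F_1} F_0$ and $D_2 \otimes_{F_2} F_0$ are Brauer-equivalent over $F_0$. Writing each as a matrix algebra over their common underlying $F_0$-division algebra, we find integers $m_1,m_2$ so that $A_1 := \Mat_{m_1}(D_1)$ and $A_2 := \Mat_{m_2}(D_2)$ satisfy $A_1 \otimes_{F_1} F_0 \iso A_2 \otimes_{F_2} F_0$. A choice of such isomorphism turns $(A_1,A_2)$ into a patching problem for central simple algebras, which by Theorem~\ref{patchingalgebras}(vi) admits a solution $A$ over $F_U$ with $A \otimes F_i \iso A_i$; hence $\beta([A]) = ([A_1],[A_2]) = ([D_1],[D_2])$.

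For injectivity, suppose $A$ is a central simple $F_U$-algebra with $[A \otimes F_i] = 1$ in $\Br(F_i)$ for $i=1,2$. Comparing $F_U$-dimensions forces $A \otimes F_i \iso \Mat_a(F_i)$ for a single integer $a$; fix such $F_i$-algebra isomorphisms $\phi_1,\phi_2$. Base changing to $F_0$ and composing yields an $F_0$-algebra automorphism $\psi := (\phi_2 \otimes F_0) \circ (\phi_1 \otimes F_0)^{-1}$ of $\Mat_a(F_0)$, which by Skolem--Noether equals $\mathrm{ad}(g)$ for some $g \in \GL_a(F_0)$. Applying Theorem~\ref{globalfactgeneral} (which is symmetric in $F_1,F_2$ since the roles of $U_1,U_2$ are interchangeable), we may factor $g$ as a product of matrices from $\GL_a(F_1)$ and $\GL_a(F_2)$, in whichever order is convenient; composing the $\phi_i$ with the inner automorphisms of $\Mat_a(F_i)$ coming from these factors arranges that $\psi$ becomes the identity. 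This exhibits an isomorphism of central simple algebra patching problems between $(A \otimes F_1, A \otimes F_2)$ and the trivial one $(\Mat_a(F_1), \Mat_a(F_2))$, so Theorem~\ref{patchingalgebras}(vi) yields $A \iso \Mat_a(F_U)$, i.e.\ $[A] = 1$ in $\Br(F_U)$.

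The main obstacle is the injectivity step: unlike for vector spaces, trivializations on individual patches do not automatically assemble into a global trivialization, because the two trivializations need not agree over $F_0$. Reducing this overlap discrepancy to an inner automorphism of $\Mat_a(F_0)$ via Skolem--Noether and then dispatching the conjugating matrix via the matrix factorization theorem is the essential move that allows vector space patching to be leveraged into a statement about Brauer classes.
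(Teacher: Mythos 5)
Your proposal is correct and follows essentially the same route as the paper: surjectivity via choosing matrix algebras of matching dimension and invoking the central simple algebra patching theorem (Theorem~\ref{patchingalgebras}(\ref{alg6})), and injectivity via Skolem--Noether to realize the overlap discrepancy as conjugation by a matrix, then factoring that matrix by Theorem~\ref{globalfactgeneral} to recalibrate the trivializations. The only cosmetic differences are that the paper represents the kernel class by a division algebra $D$ and derives $n=1$, whereas you work directly with a general central simple algebra $A$ and conclude $A \cong \Mat_a(F_U)$; both yield $[A]=1$.
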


\begin{proof}
Base change defines a homomorphism $\beta$ as above, and we wish to show that it is an isomorphism.  

For surjectivity, consider an element in $\Br(F_1) \times_{\Br(F_0)} \Br(F_2)$,
represented by a triple $(D_1,D_2,D_0)$ of division algebras over $F_1,F_2,F_0$
such that the natural maps $\Br(F_i) \to \Br(F_0)$ take the class of $D_i$ to
that of $D_0$, for $i=1,2$.   
Since the dimension of a division algebra is a square, there are 
positive integers $n_0,n_1,n_2$ such that the three integers $n_i^2\,{\rm
  dim}_{F_i}D_i$ (for $i=0,1,2$) are equal.  Let $A_i = \Mat_{n_i}(D_i)$ for $i=0,1,2$.
Then $A_i$ is a central simple algebra in the class of $D_i$ for $i=0,1,2$; and
$A_i \otimes_{F_i} F_0$ is $F_0$-isomorphic to $A_0$ for $i=1,2$, compatibly
with the inclusions $F_i \hookrightarrow F_0$ (because they lie in the same
class and have the same dimension).  So by part~(\ref{alg6}) of Theorem~\ref{patchingalgebras}, there
is a (finite dimensional) central simple $F_U$-algebra $A$ that induces
$A_0,A_1,A_2$ compatibly with the above inclusions. The class of $A$ is then an
element of $\Br(F_U)$ that maps under $\beta$ to the given element of $\Br(F_1)
\times_{\Br(F_0)} \Br(F_2)$. 

To show injectivity, consider an element in the kernel, represented by an
$F_U$-division algebra $D$.  Then $A_i := D \otimes_F F_i$ is split for $i=0,1,2$;
i.e.\ for each $i$ there is an $F_i$-algebra isomorphism $\psi_i:{\rm
  Mat}_n(F_i) \to A_i$, where $n^2 = {\rm dim}_F\, D$.  For $i=1,2$ let
$\psi_{i,0}$ be the induced isomorphism  
$\Mat_n(F_0) \to A_0$ obtained by tensoring $\psi_i$ over $F_i$ with $F_0$
and identifying each $A_i \otimes_{F_i} F_0$ with $A_0$.  So $\psi_{2,0}^{-1} \circ
\psi_{1,0}$ is an $F_0$-algebra automorphism of $\Mat_n(F_0)$, and hence is
given by (right) conjugation by a matrix $C \in \GL_n(F_0)$ (by \cite{herstein}, Corollary to Theorem~4.3.1).  By Theorem~\ref{globalfactgeneral},
there are matrices $C_i \in
\GL_n(F_i)$ such that $C = C_1C_2$.  Let $\psi_1' = \psi_1\rho_{C_1^{-1}}: {\rm
  Mat}_n(F_1) \iso A_1$ and $\psi_2' = \psi_2\rho_{C_2}: \Mat_n(F_2) \iso
A_2$, where $\rho_B$ denotes right conjugation by a matrix $B$.  Also let
$\psi_{i,0}':\Mat_n(F_0) \iso A_0$ be the isomorphism induced from
$\psi_i'$ by base change to $F_0$.  Then  
$\psi_{2,0}'^{-1} \circ \psi_{1,0}' = \rho_{C_2^{-1}} \rho_C \rho_{C_1^{-1}}$ is
the identity on $\Mat_n(F_0)$; the common isomorphism
$\psi_{1,0}' = \psi_{2,0}'$ will be
denoted by $\psi_0'$.  Thus the three isomorphisms
$\psi_i': \Mat_n(F_i) \iso A_i$ (for $i=0,1,2$) are compatible with the
natural isomorphisms $\Mat_n(F_i) \otimes_{F_i} F_0 \iso \Mat_n(F_0)$
and $A_i  \otimes_{F_i} F_0 \iso  A_0$ for $i=1,2$.  Equivalently, letting ${\rm
  CSA}(K)$ denote the category of finite dimensional central simple $K$-algebras
for a field $K$, the triples $(A_1,A_2,A_0)$ and $(\Mat_n(F_1),{\rm
  Mat}_n(F_2),\Mat_n(F_0))$, along with the associated natural base change
isomorphisms as above, represent isomorphic objects in the category ${\rm
  CSA}(F_1) \times_{{\rm CSA}(F_0)} {\rm CSA}(F_2)$.  Using the equivalence of
categories in part (vi) of the above theorem, there is up to isomorphism a
unique central simple $F_U$-algebra inducing these objects.  But $D$ and ${\rm
  Mat}_n(F_U)$ are both such algebras. Hence they are isomorphic.  So $n=1$ and
$D=F_U$, as desired.   
\end{proof}

These ideas are pursued further in \cite{hhk}, in the context of studying Galois groups of maximal subfields of division algebras.

\subsection{Inverse Galois Theory} \label{finitegroups}
We can use our results on patching over fields to recover results in inverse
Galois  theory that were originally proven (by the first author and others)
using patching over rings.  The point is that if $F$ is the fraction field of a
ring $R$,  then Galois field extensions of $F$ are in bijection with irreducible
normal  Galois branched covers of $\Spec R$, by considering generic fibres and
normalizations.  So one can pass back and forth between the two situations.

In particular, we illustrate this by proving the result below, on realizing Galois groups over the function field of the line over a complete discrete valuation ring $T$.  This result was originally shown in \cite{GCAL} (Theorem~2.3 and Corollary~2.4) using formal patching, and afterwards reproven in \cite{liu} using rigid patching.   We first fix some notation and terminology.  

Let $G$ be a finite group, let $H$ be a subgroup of $G$, and let $E$ be an
$H$-Galois  $F$-algebra for some field $F$.  The {\bf induced} $G$-Galois
$F$-algebra  $\Ind_H^G E$ is defined as follows:  

Fix a set $C = \{c_1,\dots,c_m\}$ of left coset representatives of $H$ in
$G$, with the identity coset being represented by the identity element.   
Thus for every $g \in G$ and every $i\in\{1,\ldots,m\}$ there is a unique $j$ such that
$gc_j \in c_iH$.   Let $\sigma^{(g)} \in S_m$ be the associated permutation
given by  $\sigma^{(g)}_i=j$.  Thus for each $i$, the element $h_{i,g} :=
c_i^{-1}gc_{\sigma^{(g)}_i}$ lies in $H$.

As an $F$-algebra, let $\Ind_H^G E$ be the direct product of $m$ copies of $E$
indexed by $C$.  For $g \in G$ and $(e_1,\dots,e_m) \in \Ind_H^G E$, set $g \cdot
(e_1,\dots,e_m) \in \Ind_H^G E$ equal to the element whose $i$th entry is $h_{i,g}(e_{\sigma^{(g)}_i})$.  This defines a $G$-action on $\Ind_H^G E$, whose fixed ring is $F$ (embedded diagonally).
For all $i,j \in \{1,\dots,m\}$, the elements of $c_iHc_j^{-1}$ define isomorphisms $E_j \to
E_i$,  where $E_i$ denotes the $i$th factor of $\Ind_H^G E$.  In particular,
$c_iHc_i^{-1}$  is the stabilizer of $E_i$ for each $i$.  One checks that up to
isomorphism,  this construction does not depend on the choice of left coset representatives.  

Note that $\Ind_1^G F$ is just the direct product of copies of $F$ 
that are indexed by $G$ and are permuted according to the left regular representation; i.e.\
$g \cdot (e_1,\dots,e_n) = (e_1',\dots,e_n')$ is given by $e_i' = e_j$ where
$gc_j=c_i$.   (Here $n = |G|$.)  Also, $\Ind_G^G E = E$ if $E$ is a $G$-Galois
$F$-algebra.   If $H \leq J \leq G$ and $E$ is an $H$-Galois $F$-algebra,
we may identify $\Ind_J^G \,\Ind_H^J E$ with $\Ind_H^G E$ as $G$-Galois
$F$-algebras.   If $A$ is any $G$-Galois $F$-algebra, and $E$ is a
maximal  subfield of $A$ containing $F$, then $E$ is a Galois field extension of
$F$ whose Galois group $H := \Gal(E/F)$ is a subgroup of $G$, and $A$ is
isomorphic to  $\Ind_H^G E$ as a $G$-Galois $F$-algebra.

As in the proof in \cite{GCAL}
of the result below, we will patch together ``building blocks'' which are Galois and cyclic and which induce trivial extensions over the closed fibre $t=0$ (though here we will consider extensions of fields rather than rings).  For example, if $F$ contains a primitive $n$th root of unity, then an $n$-cyclic building block may be given by $y^n=f(f-t)^{n-1}$, for some $f$.  If there is no primitive $n$th root of unity in $F$ but $n$ is prime to the characteristic, then one can descend some $n$-cyclic extension of the above form from $F[\zeta_n]$ to $F$; while if $n$ is a power of the characteristic, building blocks can be constructed using Artin-Schreier-Witt extensions. See \cite{GCAL}, Lemma~2.1, for an explicit construction.

\begin{thm}
Let $K$ be the fraction field of a complete discrete valuation ring $T$ and let
$G$ be a finite group.  Then $G$ is the Galois group of a Galois field extension
$A$ of $K(x)$ such that $K$ is algebraically closed in $A$.
\end{thm}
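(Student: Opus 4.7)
The plan is to apply Theorem~\ref{localpatchingseveral} in the category of $G$-Galois algebras (as permitted by Theorem~\ref{patchingalgebras}(\ref{alg5})) to the curve $\hat X = \P^1_T$, assembling the desired extension from cyclic ``building blocks'' concentrated at finitely many closed points of the closed fibre. I would choose generators $g_1,\dots,g_r$ of $G$ of respective orders $n_i$, pick distinct closed points $Q_1,\dots,Q_r$ on $X = \P^1_k$ (where $k$ is the residue field of $T$), and set $F := K(x)$, $S := \{Q_1,\dots,Q_r\}$, $U := X$, $U' := X \smallsetminus S$; let $F_i$, $F_i^\circ$, $F_{U'}$ denote the fields attached to this data as in Theorem~\ref{localpatchingseveral}.

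Next, for each $i$, I would construct a cyclic $\langle g_i\rangle$-Galois field extension $L_i / F_i$ such that $L_i \otimes_{F_i} F_i^\circ$ is the trivial $\langle g_i\rangle$-Galois $F_i^\circ$-algebra $(F_i^\circ)^{n_i}$. When $K$ contains $\mu_{n_i}$ and $\ch(k)\nmid n_i$, the Kummer extension $L_i = F_i[y_i]/(y_i^{n_i} - f_i(f_i-t)^{n_i-1})$ works, where $f_i \in T[x]$ lifts a uniformizer of $Q_i$: modulo $t$ the radicand is $f_i^{n_i}$, while over $F_i^\circ$ one has $f_i \in (F_i^\circ)^\times$ and $1 - t/f_i$ is a $1$-unit to which Hensel's lemma applies, so the radicand becomes an $n_i$-th power and $L_i$ splits completely. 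The general case (arbitrary $n_i$ and arbitrary $K$) is handled by the building-block construction of \cite{GCAL}, Lemma~2.1, combining Galois descent from $K(\zeta_{n_i})$ with Artin--Schreier--Witt extensions. Now set $E_i := \Ind_{\langle g_i\rangle}^G L_i$, a $G$-Galois $F_i$-algebra, and $E_{U'} := \Ind_1^G F_{U'}$, the trivial $G$-Galois $F_{U'}$-algebra. Both $E_i \otimes_{F_i} F_i^\circ$ and $E_{U'} \otimes_{F_{U'}} F_i^\circ$ are isomorphic as $G$-Galois $F_i^\circ$-algebras to $(F_i^\circ)^{|G|}$ (with $G$ acting by the left regular representation); fixing such $G$-equivariant isomorphisms equips this data with the structure of a $G$-Galois algebra patching problem. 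Its solution, provided by the $G$-Galois-algebra version of Theorem~\ref{localpatchingseveral}, is a $G$-Galois $F$-algebra $A$ with $A \otimes_F F_i \cong E_i$ and $A \otimes_F F_{U'} \cong E_{U'}$.

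The hard part is verifying that $A$ is a field and that $K$ is algebraically closed in $A$. Writing $A \cong \Ind_H^G E$ for a subgroup $H \leq G$ and an $H$-Galois field extension $E/F$, the $G$-equivariant surjection of spectra $G/\langle g_i\rangle \cong \Spec(E_i) \twoheadrightarrow \Spec(A) \cong G/H$ coming from base change to $F_i$ forces some $G$-conjugate of $g_i$ to lie in $H$ for each $i$. Exploiting the freedom in the choice of the patching isomorphisms over each overlap (which form a torsor under the $G$-equivariant $F_i^\circ$-algebra automorphism group of $(F_i^\circ)^{|G|}$), one can, following the strategy of \cite{GCAL}, Theorem~2.3, arrange that $g_i$ itself lies in $H$ for every $i$; since $\langle g_1,\dots,g_r\rangle = G$, this forces $H = G$, so $A$ is the field $E$. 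Finally, $K$ is algebraically closed in $A$: composing $A \hookrightarrow A \otimes_F F_{U'} \cong F_{U'}^{|G|}$ with any projection gives an embedding $A \hookrightarrow F_{U'}$, and any element of $F_{U'}$ algebraic over $K$ already lies in $K$, since $\hat R_{U'}$ is $t$-adically complete with $\hat R_{U'}/t\hat R_{U'} = \O(U') \subseteq k(x)$ in which $k$ is algebraically closed, and an iterative lifting argument propagates this fact through the $t$-adic filtration.
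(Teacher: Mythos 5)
Your proposal is correct and produces the desired result, but it takes a route that is genuinely different from the paper's in two respects.

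First, you invoke Theorem~\ref{localpatchingseveral} (patches built from the complete local rings $\hat R_{Q_i}$ and the overlap fields $F_i^\circ$), whereas the paper invokes Theorem~\ref{globalpatchingseveral} with $U_i = \{P_i\}$ for $i = 1,\dots,r$, $U_{r+1}$ the open complement, and a \emph{common} overlap field $F_\varnothing$. Correspondingly, your building blocks $L_i$ are constructed directly over $F_{Q_i} = \operatorname{Frac}(\hat R_{Q_i})$, which lets you (in the tame Kummer case) verify the splitting over $F_i^\circ$ by an elementary Hensel argument in the $t$-adically complete ring $\hat R_{Q_i}^\circ$. The paper instead starts from the covers $Y_i \to \P^1_T$ furnished by \cite{GCAL}, Lemma~2.1, pulls back to $\Spec\hat R_{\{P_i\}}$, and must argue separately (via total ramification at $P_i$) that the pullback is an integral domain; your local choice sidesteps that step but also uses a more refined overring. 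The paper also restricts to generators of \emph{prime power} order, since that is the situation covered by \cite{GCAL}, Lemma~2.1; to keep the citation honest you should do the same, although this loses nothing (any finite group is generated by elements of prime power order).

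Second, to force $H := \Gal(E/F) = G$ you observe that $H$ must contain some $G$-conjugate of each $g_i$ and then appeal to the freedom in the patching isomorphisms (the torsor under $\operatorname{Aut}_G(\operatorname{Ind}_1^G F_i^\circ) \cong G$) to twist each $\mu_i$ a posteriori so that $g_i$ itself lies in $H$, as in \cite{GCAL}, Theorem~2.3. The paper instead arranges the compatibility from the outset: the inclusion $E_i \hookrightarrow F_0$ is chosen to be the projection onto the identity component of $\operatorname{Ind}_1^{H_i} F_0$, so that the ``identity'' primes $I_i \subset A_i$ and $I_0 \subset A_0$ align, and the prime $I \subset A$ defined as their common preimage is automatically stabilized by each $H_i$. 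The two are equivalent --- your post-hoc twisting and the paper's pre-alignment of identity components are two descriptions of the same degree of freedom --- but the paper's formulation avoids having to reason about how a change in one $\mu_i$ perturbs the solution $A$ (which in your version is harmless because the overlap fields $F_i^\circ$ at distinct points are independent, though you do not say so explicitly). Finally, you prove $K$ is algebraically closed in $A$ by embedding $A$ into $F_{U'}$ via a projection of $A \otimes_F F_{U'} \cong F_{U'}^{|G|}$, which works but requires $A$ to be a field first; the paper embeds $A$ into $A_0 \subseteq F_0^{|G|}$ and uses that $K$ is algebraically closed in $F_0$, which is marginally more direct since it uses the largest ambient field.
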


\begin{proof}
Let $g_1,\dots,g_r$ be generators for $G$ that have prime power orders, and let $H_i \leq G_i$ be the cyclic subgroup generated by $g_i$.  Let $k$ be the residue field of $T$, and pick distinct
monic irreducible polynomials $f_1(x),\dots,f_n(x) \in k[x]$; these define distinct 
closed points $P_1,\dots,P_n$ of the projective $x$-line $\P^1_k$.  For each~$i$ let $\hat f_i(x) \in T[x]$ be
some (irreducible) monic polynomial lying over $f_i(x)$.  This defines
a lift $\hat P_i$ of $P_i$ to a reduced effective divisor on $\P^1_T$.  

According to \cite{GCAL}, Lemma~2.1, there is an irreducible $H_i$-Galois branched
cover  $Y_i \to \P^1_T$ whose special fibre is unramified away from $P_i$, and
such  that its fibre over the generic point~$\eta$ of the special fibre is trivial
(corresponding  to a mock cover, in the terminology there).  
That is, there is an isomorphism 
$\phi_i:\Spec\bigl(\Ind_1^{H_i} k(x)\bigr) \to Y_i \times_{\P^1_T} \eta $ as $H_i$-Galois covers of~$\eta$.
Replacing $Y_i$ by
its  normalization in its function field, we may assume that $Y_i$ is normal.
Necessarily, $Y_i \to \P^1_T$ is totally ramified over the closed point $P_i$.
Namely, if $I \leq H_i$ is the inertia group at $P_i$ then $Y_i/I \to \P^1_T$
is unramified and hence purely arithmetic (i.e.\ of the form $\P^1_S \to \P^1_T$
for some finite extension $S$ of $T$); but generic triviality on the special
fibre  then implies that $S=T$ and so $I=H_i$.  (The fact that it is totally
ramified  at $P_i$ can also be deduced from the explicit expressions in the
proof of \cite{GCAL}, Lemma~2.1.)  

Let $t$ be a uniformizer for $T$, and for $i=1,\dots,r$ let $\hat R_i$ be the
$t$-adic  completion of the local ring of $\P^1_T$ at $P_i$, with fraction field
$F_i$.  The pullback of $Y_i \to \P^1_T$ to $\Spec \hat R_i$ is finite and
totally ramified.  Hence it is irreducible, of the form $\Spec \hat S_i$ for
some finite extension~$\hat S_i$ of~$\hat R_i$ that is a domain.  Thus the
fraction field $E_i$ of $\hat S_i$ is an $H_i$-Galois field extension of~$F_i$.
Let $\hat R_0$ be the completion of the local ring of $\P^1_T$ at $\eta$, with fraction field $F_0$.  
Let~$R_{r+1}$ be the subring of $F:=K(x)$ consisting of the rational functions on $\P^1_T$ that are
regular on the special fibre $\P^1_k$ away from $P_1,\dots,P_r$.  Let $\hat
R_{r+1}$ be the $t$-adic completion of~$R_{r+1}$ and let~$F_{r+1}$ be the
fraction field of $\hat R_{r+1}$.  
Also let $H_0 = H_{r+1} = 1 \leq G$ and write $E_0=F_0$, $E_{r+1} = F_{r+1}$.  For $i=0,1,\dots,r+1$, we consider the
$G$-Galois $F_i$-algebra $A_i := \Ind_{H_i}^G E_i$.

We claim that there is 
an isomorphism $E_i \otimes_{F_i} F_0 \to \Ind_1^{H_i}\, F_0$ of $H_i$-Galois $F_0$-algebras along with compatible $F_i$-algebra inclusions 
$E_i \hookrightarrow E_0 = F_0$ and $A_i \hookrightarrow A_0$, for $i=1,\dots,r+1$.  In the case $i=r+1$ this is clear from the definitions of $H_{r+1}$ and $E_{r+1}$, via the inclusion $F_{r+1} \hookrightarrow F_0$.  For $1 \le i \le r$, the asserted isomorphisms are induced by $\phi_i$.  Namely, 
by Hensel's Lemma applied to $\hat R_0$, there is a unique isomorphism
$\hat\phi_i:\Spec\bigl(\Ind_1^{H_i}\, \hat R_0\bigr) \to 
Y_i \times_{\P^1_T} \Spec \hat R_0$ of $H_i$-Galois covers of $\Spec \hat R_0$ that lifts $\phi_i$.  Using the natural identifications
$Y_i \times_{\P^1_T} \Spec \hat R_0
= Y_i \times_{\P^1_T} \Spec \hat R_i \times_{\Spec \hat R_i} \Spec \hat R_0 = \Spec \hat S_i \times_{\Spec \hat R_i} \Spec \hat R_0 = \Spec(\hat S_i \otimes_{\hat R_i} \hat R_0)$,
the isomorphism $\hat\phi_i$ corresponds on the 
ring level to an isomorphism $\hat S_i \otimes_{\hat R_i} \hat R_0 \to \Ind_1^{H_i}\, \hat R_0$ of $H_i$-Galois $\hat R_0$-algebras.  Since $E_i$ is the fraction field of the finite $\hat R_i$-algebra $\hat S_i$, and since $F_i$ is the fraction field of $\hat R_i$, there is a natural identification of $E_i$ with $\hat S_i \otimes_{\hat R_i} F_i$.  So tensoring the above $\hat R_0$-algebra isomorphism with  $F_0$ yields an isomorphism $E_i \otimes_{F_i} F_0 = \hat S_i \otimes_{\hat R_i} F_i \otimes_{F_i} F_0 = \hat S_i \otimes_{\hat R_i} F_0  \to \Ind_1^{H_i}\, F_0$ of $H_i$-Galois $F_0$-algebras; and
hence also an $F_i$-algebra inclusion $E_i \hookrightarrow F_0$, using the projection onto the identity component.  
The functor $\Ind_{H_i}^G$ then induces an isomorphism $\Ind_{H_i}^G\,(E_i \otimes_{F_i} F_0) \to \Ind_{H_i}^G\,\Ind_1^{H_i}\, F_0 = \Ind_1^G F_0 = A_0$ of $G$-Galois $F_0$-algebras.  Tensoring the inclusion $F_i \hookrightarrow F_0$ with the 
$F_i$-algebra~$A_i$ yields an $F_i$-algebra inclusion
$A_i = \Ind_{H_i}^G\,E_i \hookrightarrow (\Ind_{H_i}^G\,E_i) \otimes_{F_i} F_0 = \Ind_{H_i}^G\,(E_i \otimes_{F_i} F_0) \to A_0$, concluding the verification of the claim.

Thus we may apply Theorem~\ref{patchingalgebras}(\ref{alg5}), in the case of Theorem~\ref{globalpatchingseveral}, 
to the fields~$F_i$ and the
$G$-Galois $F_i$-algebras $A_i$, for $i=0,1,\dots,r+1$.  We then obtain a $G$-Galois
$F$-algebra~$A$ that induces the~$A_i$'s compatibly.  Moreover, 
as observed after Theorem~\ref{globalpatchingseveral}, $A$ is
the intersection of the algebras $A_1,\dots,A_r,A_{r+1}$ inside $A_0$.  Note that $K$ is algebraically closed in $A$ because it is algebraically closed in $F_0$ and hence in $A_0$.

It remains to show that the $G$-Galois $F$-algebra $A$ is a field. 
For $i=0,1,\dots,r+1$ let $I_i \subset A_i$ be the kernel of the projection of $A_i = \Ind_{H_i}^G E_i$ onto the identity copy of $E_i$ (i.e.\ the copy of $E_i$ indexed by the identity element of $G$), and identify this identity copy with $A_i/I_i$.  
Then $I_i \subset A_i$ is the inverse image
of $I_0 \subset A_0$ under $A_i \hookrightarrow A_0$, since the inclusions $A_i \hookrightarrow A_0$ and $E_i \hookrightarrow E_0 = F_0$ are compatible with the projections $A_i \to E_i$ onto their identity components.
Let $I
\subseteq A$ be the inverse image of $I_0 \subset A_0$ under $A
\hookrightarrow A_0$, and let $E=A/I$.  Thus~$I$ is also the inverse image of $I_i \subset A_i$ under $A \hookrightarrow A_i$, since $A \hookrightarrow A_0$ factors
through~$A_i$ and $I_i \subset A_i$ is the inverse image
of $I_0 \subset A_0$ under $A_i \hookrightarrow A_0$.  Hence~$I$ is a prime ideal of~$A$, and~$E$ is an integral domain.  But~$E$ is finite over the field $F$, since the $G$-Galois $F$-algebra~$A$ is.  Thus~$E$ is a field.  Now the above inclusions are compatible with the $G$-Galois actions. So using the identification $E_i = A_i/I_i$ and the fact that $H_i = \Gal(E_i/F_i) \subseteq G$ is the stabilizer of $I_i$ in~$G$, we have that every element of~$H_i$ restricts to an element of $H := \Gal(E/F) \subseteq G$, the stabilizer of~$I$ in~$G$.  That is, $H$ contains~$H_i$ for all~$i$.  But $H_1,\dots,H_r$ generate~$G$.  So $H=G$.  Thus~$I$ is stabilized by all of~$G$; and since the identity component of each element of~$I$ is zero (regarding $I \subseteq I_0 \subset A_0 = \Ind_1^G F_0$), it follows that $I=(0)$.  Hence $E=A$ and~$A$ is a field. 
\end{proof}

\begin{remark}
\renewcommand{\theenumi}{\alph{enumi}}
\begin{enumerate}

\item The above proof can be extended to more general smooth curves $\hat X$ over a complete discrete valuation ring $T$.  Namely, Theorem~\ref{globalpatchingseveral} permits patching on such curves; and the same expressions used for building blocks in the case of the line can be used for other curves, since they remain $n$-cyclic and totally ramified.  This latter fact can be seen directly from the construction in \cite{GCAL}.  It can also be seen by choosing a parameter $x$ for a point $P$ on the closed fibre $X$ of $\hat X$; constructing the building blocks for the $x$-line over $T$; and then taking a base change to the local ring at $P$ (which, being \'etale, preserves total ramification).  This contrasts with the strategy in \cite{haranjarden2}, Proposition~1.4, which is to map a curve to the line; perform a patching construction there; and then deduce a result about the curve.\label{rem1}

\item Alternatively, the above proof can be extended to more general smooth curves over $T$ by using Theorem~\ref{localpatchingseveral} instead of Theorem~\ref{globalpatchingseveral} (where the complete local ring is independent of which smooth curve is taken).  It can also be extended to the case of a singular normal $T$-curve whose closed fibre is generically smooth, by instead using Theorem~\ref{singularpatching}.\label{rem2}

\item In \cite{GCAL}, Section 2, more was shown: that the theorem remains true if we replace
$T$ by any complete local domain that is not a field.  But in fact this more
general assertion follows from the above theorem because every such domain
contains a complete discrete valuation ring; see \cite{jarden}, Lemma~1.5 and Corollary~1.6. \label{rem3}
  
\item One can similarly recover other results in inverse Galois theory within our framework of patching over fields; e.g., the freeness of the absolute Galois group of $k(x)$, for $k$ algebraically closed (the ``Geometric Shafarevich Conjecture'' \cite{harep}, \cite{popep}).  But the above result is merely intended to be illustrative, to show how patching over fields can be used in geometric Galois theory.\label{rem4}
 
\end{enumerate}
\end{remark}

\subsection{Differential Modules} \label{differential}
The main interest in patching vector spaces is of course that we can also patch vector spaces with additional structure. 
This was done for various types of algebras in Section~7.1 above.
The following application is another example of this sort. 

Suppose that $F$ is a field of characteristic zero 
equipped with a derivation $\partial_F$. A {\bf differential module}
over $F$ is a finite dimensional $F$-vector space $M$ together with an additive map
$\partial_M:M\rightarrow M$ such that $\partial_M(f\cdot m)=\partial_F(f)\cdot m + f\cdot \partial_M(m)$ 
for all $f \in F, m \in M$ (Leibniz rule). A {\bf homomorphism of differential modules} is a homomorphism 
of the underlying vector spaces that respects the differential structures. 
It is well known that differential modules over a differential field $F$ form a tensor category 
$\DMod (F)$ (in fact a Tannakian category over $F$; e.g. see \cite{matzatvdp}, \S1.4). 

We will state only the simplest version of patching differential modules, a
consequence of Theorem~\ref{globalpatching}. There are respective versions of
Theorem~\ref{globalpatchingseveral}, and of the patching results in
Section~\ref{localsection}. 

\begin{thm}\label{diffmodpatching}
Let $T$ be a complete discrete valuation ring with fraction field $K$ of characteristic zero and residue 
field $k$, and let $\hat X$ be a smooth connected projective $T$-curve with
closed fibre $X$ and function field $F$.
Let $U_1, U_2\subseteq X$, and let $U:=U_1 \cup U_2$, $U_0:=U_1 \cap U_2$. 
Equip $F_U,F_{U_i}$ with the derivation $\frac{d}{dx}$ for some rational function $x$ on $\hat X$ that is not contained in $K$.

Then the base change functor
$$\DMod (F_U)\rightarrow \DMod
(F_{U_1})\times_{\DMod (F_{U_0})} 
\DMod (F_{U_2})$$
is an equivalence of categories, with inverse given by intersection.
\end{thm}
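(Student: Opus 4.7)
The plan is to reduce the statement to the vector space result Theorem~\ref{globalpatching} and to verify that the additional derivation structure transfers faithfully under both base change and its quasi-inverse (the intersection). This mirrors the strategy of Theorem~\ref{patchingalgebras}, but differential modules require a small extra argument because the derivation $\partial_M$ is not $F_U$-linear and hence is not itself a morphism in $\Vect(F_U)$.

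First I would note that the derivation $\partial := d/dx$ on $F_U$ extends to $F_{U_1}, F_{U_2}, F_{U_0}$ compatibly with the inclusions, so that any differential module $(M,\partial_M)$ over $F_U$ produces differential modules $M_i := M \otimes_{F_U} F_{U_i}$ (with $\partial_{M_i}(m \otimes f) = \partial_M(m) \otimes f + m \otimes \partial(f)$) together with canonical differential identifications of $M_1 \otimes_{F_{U_1}} F_{U_0}$ and $M_2 \otimes_{F_{U_2}} F_{U_0}$. This defines the base change functor landing in the stated 2-fibre product.

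To construct a quasi-inverse, let $((M_1,\partial_1),(M_2,\partial_2);\phi)$ be an object of the target, where $\phi$ is an isomorphism of differential modules over $F_{U_0}$. Forgetting derivations, Theorem~\ref{globalpatching} produces a unique $F_U$-vector space $M$ solving the underlying vector space patching problem, realized as the intersection $M_1 \cap M_2$ inside $M_0 := M_2 \otimes_{F_{U_2}} F_{U_0}$ (using $\phi$ to embed $M_1$). Because $\phi$ commutes with derivations, the componentwise operator $(\partial_1,\partial_2)$ preserves this intersection and thus defines an additive endomorphism $\partial_M$ of $M$. The Leibniz rule for $\partial_M$ relative to $\partial$ on $F_U$ is inherited from the Leibniz rule for each $\partial_i$, since $F_U \subseteq F_{U_i}$ and each $\partial_{F_{U_i}}$ restricts to $\partial$ on $F_U$.

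For full faithfulness of $\beta$, any morphism $\beta(M,\partial_M) \to \beta(N,\partial_N)$ in the target is, in particular, a morphism of underlying vector-space patching problems, which by Theorem~\ref{globalpatching} lifts uniquely to an $F_U$-linear map $M \to N$; that this map commutes with $\partial_M$ and $\partial_N$ is checked on each patch, where it holds by assumption. Hence $\beta$ is an equivalence, with inverse given by intersection. The only nonformal point — the place where the hypothesis that $\phi$ is differential actually gets used — is the well-definedness of $\partial_M$ on $M$, i.e.\ that $\partial_1$ and $\partial_2$ agree when restricted to $M$ inside $M_0$; the rest of the argument is a routine unwinding of the vector-space case.
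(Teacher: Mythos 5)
Your proposal is correct and follows essentially the same strategy as the paper: reduce to Theorem~\ref{globalpatching} at the vector-space level, realize the solution as the intersection $M_1\cap M_2$, use the fact that $\phi$ is differential to see that $\partial_1,\partial_2$ restrict to a common derivation $\partial_M$ on $M$, and then observe that morphisms and the Leibniz rule carry over. The paper makes one step slightly more explicit — it invokes Corollary~\ref{dimcriterion} to note that $M$ contains an $F_{U_i}$-basis of $M_i$, so that the base-change derivation on $M\otimes_{F_U}F_{U_i}$ necessarily coincides with $\partial_i$ — but this is exactly the "routine unwinding" you allude to, so there is no real difference.
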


\begin{proof} 
Recall that $F_U=F_{U_1}\cap F_{U_2}$ (Theorem~\ref{globalintersection}). 
By Theorem~\ref{globalpatching}, base change is an equivalence of categories on
the level of vector spaces; so for every object $(M_1,M_2;\phi)$ in $\DMod
(F_{U_1})\times_{\DMod (F_{U_0})} \DMod
(F_{U_2})$, there is an $F_U$-vector space $M$ that induces $(M_1,M_2;\phi)$
as an object in $\Vect(F_{U_1})\times_{\Vect(F_{U_0})} \Vect(F_{U_2})$.  Moreover, as noted after that result, $M$ is given by $M_1 \cap M_2$. Consequently, the derivations on $M_1$ and $M_2$ restrict compatibly to $M$; i.e., $M$ is a differential module under that common restricted derivation.  By Corollary~\ref{dimcriterion}, $\dim_{F_U} M = \dim_{F_{U_i}} M_i$ for $i=1,2$; in particular, $M$ contains a basis of $M_i$ as a vector space over $F_{U_i}$ ($i=1,2$). But the derivation on each $M_i$ is already determined when given on such a basis (by the Leibniz rule). Thus $M$ induces the $M_i$'s as differential modules, compatibly with $\phi$.

So the base change functor gives a bijection on isomorphism classes.  Similarly, morphisms between corresponding objects in the two categories are in bijection on the level of vector spaces, and hence also on the level of differential modules (using that the derivations are related by taking
base change and restriction).  Thus the functor is an equivalence of categories. \end{proof}

After choosing a basis of each $M_i$ in the above proof, one can also explicitly define the derivation on $M$ using the matrix representations of the derivations and a factorization of the matrix defining $\phi$ given by Theorem~\ref{globalfactgeneral}.

\begin{remark}
As noted in the proof of Theorem~\ref{patchingalgebras}, the equivalence of the
categories of vector spaces is in fact an equivalence of tensor categories; the
same remains true for differential modules.
\end{remark}

There is a Galois theory for differential modules that mimics the usual Galois theory of finite field extensions.  A natural question to ask is whether one can
control the differential Galois group of a differential module obtained by patching. This question
(along with its implications for the inverse problem in differential Galois theory) is
the subject of \cite{harhar2} (see also \cite{owrep2}), which provides applications of the above theorem.


\medskip

\noindent Author information:

\medskip

\noindent David Harbater: Department of Mathematics, University of Pennsylvania, Philadelphia, PA 19104-6395, USA; email: {\tt harbater@math.upenn.edu}

\medskip

\noindent Julia Hartmann: IWR, University of Heidelberg, Im Neuenheimer Feld 368, 69120 Heidelberg, Germany; email:  {\tt 
Julia.Hartmann@iwr.uni-heidelberg.de}

\end{document}